\newtheorem{theorem}{Theorem}[section]
\newtheorem{proposition}[theorem]{Proposition}
\newtheorem{lemma}[theorem]{Lemma}
\newtheorem{corollary}[theorem]{Corollary}
\theoremstyle{definition}
\newtheorem{example}[theorem]{Example}
\newtheorem{definition}[theorem]{Definition}
\begin{document}

\author[A. Moussavi]{Ahmad Moussavi}
\address{Department of Mathematics, Tarbiat Modares University, 14115-111 Tehran Jalal AleAhmad Nasr, Iran}
\email{moussavi.a@modares.ac.ir; moussavi.a@gmail.com}

\author[P. Danchev]{Peter Danchev}
\address{Institute of Mathematics and Informatics, Bulgarian Academy of Sciences, 1113 Sofia, Bulgaria}
\email{danchev@math.bas.bg; pvdanchev@yahoo.com}

\author[A. Javan]{Arash Javan}
\address{Department of Mathematics, Tarbiat Modares University, 14115-111 Tehran Jalal AleAhmad Nasr, Iran}
\email{a.darajavan@modares.ac.ir; a.darajavan@gmail.com}

\author[O. Hasanzadeh]{Omid Hasanzadeh}
\address{Department of Mathematics, Tarbiat Modares University, 14115-111 Tehran Jalal AleAhmad Nasr, Iran}
\email{o.hasanzade@modares.ac.ir; hasanzadeomiid@gmail.com}

\title{On strongly $\Delta$-clean rings}
\keywords{idempotent; unit; boolean ring; clean ring; $\Delta(R)$}
\subjclass[2010]{16N40, 16S50, 16U99}

\maketitle




\begin{abstract}
This study explores in-depth the structure and properties of the so-called {\it strongly $\Delta$-clean rings}, that is a novel class of rings in which each ring element decomposes into a sum of a commuting idempotent and an element from the subset $\Delta(R)$. Here, $\Delta(R)$ stands for the extension of the Jacobson radical and is defined as the maximal subring of $J(R)$ invariant under the unit multiplication. We present a systematic framework for these rings by detailing their foundational characteristics and algebraic behavior under standard constructions, as well as we explore their key relationships with other well-established ring classes. Our findings demonstrate that all strongly $\Delta$-clean rings are inherently strongly clean and $\Delta U$, but under centrality constraints they refine the category of uniquely clean rings. Additionally, we derive criteria for the strong $\Delta$-clean property in triangular matrix rings, their skew analogs, trivial extensions, and group rings. The analysis reveals deep ties to boolean rings, local rings, and quasi-duo rings by offering new structural insights in their algebraic characterization.
\end{abstract}

\section{Introduction and Background}

Throughout this paper, all rings are assumed to be unital and associative. Almost all symbols and concepts are traditional and are consistent with the well-known books \cite{lamf} and \cite{lame}. As usual, the Jacobson radical, the set of nilpotent elements, the set of idempotent elements, and the set of units of \( R \) are denoted by \( J(R) \), \( Nil(R) \), \( Id(R) \), and \( U(R) \), respectively. Additionally, we write $M_n(R)$ and $T_n(R)$ for the $n \times n$ matrix ring and the $n \times n$ upper triangular matrix ring, respectively. Standardly, a ring is termed {\it abelian} if each idempotent element is central.

Imitating \cite{2} and \cite{7}, an element $r \in R$ is said to be {\it clean} if there is an idempotent $e \in R$ and an unit $u \in R$ such that $r=e+u$. Such an element $r$ is further called {\it strongly clean} if the existing idempotent and unit can be chosen such that $ue=eu$. Accordingly, a ring is called {\it clean} (respectively, {\it strongly clean}) if each of its elements is clean (respectively, strongly clean). For a comprehensive investigation of this class of rings are written too many papers as, for a more detailed information, we refer to these cited in the current bibliography.

On the other hand, mimicking \cite{diesl}, an element $r\in R$ is said to be {\it nil-clean} if there is an idempotent $e \in R$ and a nilpotent $b \in R$ such that $r=e+b$. Such an element $r$ is further called {\it strongly nil-clean} if the existing idempotent and nilpotent can be chosen such that $be=eb$. A ring is called {\it nil-clean} (respectively, {\it strongly nil-clean}) if each of its elements is nil-clean (respectively, strongly nil-clean).

On the other side, following \cite{csj, chensjc}, \textit{strongly J-clean} rings are those rings in which each element can be written as the sum of an idempotent and an element from the Jacobson radical $J(R)$ that commute.

By considering and analyzing the aforementioned definitions along with the fact that $\Delta(R)$ is a (possibly proper) subset of $J(R)$, which is not necessarily an ideal and does not retain the desirable properties such as those of $Nil(R)$, a natural question arises regarding the structure of those rings $R$ in which every element can be expressed as the sum of an idempotent element and an element from $\Delta(R)$ that commute with each other. The primary aim of the present work is to examine such rings and to undertake a thorough investigation of their structural properties.

The examination of clean-type rings has attracted considerable attention due to their deep structural properties and their interrelations with various notions in non-commutative ring theory. Among these, the concepts of strongly clean and strongly nil-clean rings has led to numerous developments and refinements in understanding how elements of a ring can be decomposed into simpler algebraic components. Building on this rich framework, the present paper naturally introduces and explores the notion of \textit{strongly $\Delta$-clean} rings in which each element is expressed as a sum of an idempotent and an element from $\Delta(R)$ that commute with one another as the terminology logically follows that from the exposition presented above. Certainly, all strongly $J$-clean rings are themselves strongly $\Delta$-clean, but the reverse implication is, expectably, {\it not} true in all generality (see Problem 1).

In this vein, the set $\Delta(R)$ defined as
\[
\Delta(R) = \{x \in R \mid 1 - xu \in U(R) \text{ for all } u \in U(R)\},
\]
has been intensively studied as a generalization of the Jacobson radical (see \cite[Exercise 4.24]{lame} and \cite{lmr}). Although $\Delta(R)$ always properly contains $J(R)$, it is {\it not} necessarily an ideal and thus some of the favorable properties of the classical nil-radicals lack. Nonetheless, it plays an important role as being the largest subring of $J(R)$ that is closed under multiplication by units.

The goal of this article is to initiate a systematic study of strongly $\Delta$-clean rings by developing their basic properties and structure theory. In particular, we compare them with well-known classes such as strongly clean rings, uniquely clean rings, and $\Delta U$ rings. We establish several characterizations, investigate their behavior under standard ring constructions, and identify a variety of structural restrictions that govern such rings. These include conditions under which matrix rings, trivial extensions, group rings, and skew polynomial rings exhibit the strongly $\Delta$-clean property. The results provided herein not only generalize existing knowledge in the subject, but also open the door for further explorations into the role of $\Delta(R)$ in ring theory.

We are now planning to give a brief program of our main material established in the sequel: In the next Section 2, we succeed in establishing several fundamental properties and characterizations of strongly $\Delta$-clean rings from multiple perspectives (see, for example, Lemma \ref{lemma 0}, \ref{lemma 2}, \ref{lemma 3}, \ref{lemma 4}, \ref{dedkind finite}, Corollary \ref{corner ring}, and Proposition \ref{2.2}). In the subsequent Section 3, in the course of our investigation, we succeed in establishing several principal and distinctive characterizing properties of strongly $\Delta$-clean rings. These properties are primarily formulated and rigorously proven in Theorems \ref{1}, \ref{cor 2}, \ref{cor 3}, \ref{boolean}, \ref{triangular}, and \ref{11} accompanied by a number of auxiliary results and related statements that further illuminate the structural and algebraic behavior of such rings. In Section 4, we give some extensions of strongly $\Delta$-clean rings; for instance, skew polynomial extensions, skew triangular matrix extensions, trivial extensions and Morita contexts. In final Section 5, we investigate the conditions under which a group ring becomes strongly $\Delta$-clean subject to certain restrictions on the underlying group and ring (see, for instance, Theorem \ref{1.6} and Proposition \ref{group}). We finish our work with two challenging problems of some interest and importance which, hopefully, will stimulate a further study on the topic (see Problems 1 and 2).

\section{Examples and Basic Properties}

We start here with our pivotal instrument.

\begin{definition}
We say that $R$ is a {\it strongly $\Delta$-clean} ring if every element of $R$ is the sum of an idempotent from $R$ and an element from $\Delta(R)$ that commute with each other. Such a sum's presentation is also said to be a {\it strongly $\Delta$-clean} representation (or just {\it S$\Delta$C} representation for short).
\end{definition}

A few more technicalities are now on hold.

\begin{lemma}\label{1.1}
For any ring \(R\), the following equality is true:
	\[
	U(R) + \Delta(R) = U(R).
	\]
\end{lemma}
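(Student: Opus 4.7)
The plan is to verify both containments of the claimed equality. The inclusion $U(R) \subseteq U(R) + \Delta(R)$ is immediate, because $0 \in \Delta(R)$ (indeed $1 - 0 \cdot u = 1 \in U(R)$ for every $u \in U(R)$), so any $u \in U(R)$ can be written as $u = u + 0$.

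For the nontrivial inclusion $U(R) + \Delta(R) \subseteq U(R)$, I would fix $u \in U(R)$ and $d \in \Delta(R)$ and factor
\[
u + d = u\bigl(1 + u^{-1}d\bigr).
\]
The goal then reduces to showing that $1 + u^{-1}d \in U(R)$, for once that is established, $u + d$ is a product of two units and hence a unit.

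To handle that reduction, I would apply the defining property of $\Delta(R)$ with the unit $v := -u^{-1}$: by definition, $1 - d v = 1 + d u^{-1} \in U(R)$. Now I invoke Jacobson's lemma, namely that $1 + ab \in U(R)$ if and only if $1 + ba \in U(R)$; taking $a = d$ and $b = u^{-1}$ converts the established $1 + du^{-1} \in U(R)$ into the desired $1 + u^{-1}d \in U(R)$.

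The only real subtlety is the left/right asymmetry between the way $\Delta(R)$ is defined (via $1 - xu$, i.e.\ multiplying $x$ on the right by a unit) and the shape of the factor $1 + u^{-1}d$ (with $d$ on the right). This is the main obstacle, and it is dealt with precisely by Jacobson's lemma as above; alternatively, one could first record as a preliminary observation that $\Delta(R)$ admits the symmetric description $\{x \in R : 1 - ux \in U(R) \text{ for all } u \in U(R)\}$, which again reduces to Jacobson's lemma and then applies directly to the factorization $u + d = u(1 + u^{-1}d)$.
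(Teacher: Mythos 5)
Your proposal is correct and follows essentially the same route as the paper: the same factorization $u+d = u(1+u^{-1}d)$, with the easy inclusion handled via $0\in\Delta(R)$ (the paper uses $J(R)\subseteq\Delta(R)$ instead, which is equivalent in effect). The paper dismisses the invertibility of $1+u^{-1}d$ with ``one easily checks,'' whereas you correctly identify and resolve the left/right asymmetry via Jacobson's lemma — a worthwhile clarification, but not a different proof.
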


\begin{proof}
We know that \(U(R) \subseteq U(R) + J(R) \subseteq U(R) + \Delta(R)\). To treat the reciprocal inclusion, given \(x \in U(R) + \Delta(R)\), we may write \(x = u+r\), where \(u \in U(R)\) and \(r \in \Delta(R)\). One easily checks that \(x = u + r = u(1+u^{-1}r) \in U(R) \), as required.
\end{proof}

The following claim can easily be proven, so we omit the details leaving them to the interested reader for a direct check.

\begin{lemma}\label{lemma 0}
(1) Suppose \( R = \prod_{i \in I} R_i \). Then, \( R \) is a strongly $\Delta$-clean ring if, and only if, for each \( i \in I \), \( R_i \) is a strongly $\Delta$-clean ring.

(2) Suppose \( R \) is a strongly $\Delta$-clean ring and \( I \) is an ideal of \( R \) such that \( I \subseteq J(R) \). Then, \( R/I \) is a strongly $\Delta$-clean ring (in particular, $R/J(R)$ is a strongly $\Delta$-clean ring).
\end{lemma}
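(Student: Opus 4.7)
My plan for part (1) is to exploit the fact that every relevant piece of structure behaves coordinate-wise in a direct product. Straight from the definitions one checks that $U\!\left(\prod_{i \in I} R_i\right) = \prod_{i \in I} U(R_i)$, $Id\!\left(\prod_{i \in I} R_i\right) = \prod_{i \in I} Id(R_i)$, and---by testing the defining condition of $\Delta$ on units of the product component by component---$\Delta\!\left(\prod_{i \in I} R_i\right) = \prod_{i \in I} \Delta(R_i)$. Since two tuples in the product commute exactly when they commute in every coordinate, an S$\Delta$C representation of $x = (x_i)_{i \in I}$ in $R$ is precisely the datum of an S$\Delta$C representation $x_i = e_i + r_i$ in each $R_i$. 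Both directions of the biconditional then drop out at once.

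For part (2), I would transport an S$\Delta$C representation through the canonical projection $\pi \colon R \to R/I$. This map sends idempotents to idempotents and preserves commutation, so the only point needing genuine verification is the inclusion $\pi(\Delta(R)) \subseteq \Delta(R/I)$. To establish it, take $r \in \Delta(R)$ and any unit $\bar{u} \in R/I$; since $I \subseteq J(R)$, any lift $u \in R$ of $\bar{u}$ is itself a unit of $R$ (units correspond across an ideal contained in the Jacobson radical). Then $1 - ru \in U(R)$ by the defining property of $\Delta(R)$, and reducing modulo $I$ yields $1 - \bar{r}\,\bar{u} \in U(R/I)$, confirming $\bar{r} \in \Delta(R/I)$. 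Now, given any $\bar{x} \in R/I$, I would lift to some $x \in R$, write $x = e + r$ with $e^2 = e$, $r \in \Delta(R)$, and $er = re$, and apply $\pi$ to obtain the desired S$\Delta$C decomposition $\bar{x} = \bar{e} + \bar{r}$. The parenthetical case $I = J(R)$ is an immediate specialisation.

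The only remotely delicate step is the containment $\pi(\Delta(R)) \subseteq \Delta(R/I)$; this is not automatic merely from $\Delta$ being stable under unit multiplication, and it genuinely rests on the standard correspondence of units in $R$ and $R/I$ whenever $I \subseteq J(R)$. Everything else is a routine transport of definitions through the projection map, in line with the authors' remark that the lemma can easily be proven.
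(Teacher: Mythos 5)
Your proposal is correct and follows exactly the standard argument the authors had in mind when they declared the lemma "easily proven" and omitted the details: coordinate-wise identification of units, idempotents, and $\Delta$ for part (1), and unit-lifting modulo an ideal inside $J(R)$ to get $\pi(\Delta(R)) \subseteq \Delta(R/I)$ for part (2). You also correctly isolate the one non-trivial point, namely that the containment $\pi(\Delta(R)) \subseteq \Delta(R/I)$ relies on $I \subseteq J(R)$ and would fail for a general ideal.
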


\begin{lemma}\label{1.5}
Every strongly $\Delta$-clean ring is strongly clean.	
\end{lemma}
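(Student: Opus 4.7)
The plan is to take an arbitrary $r \in R$ and convert the given strongly $\Delta$-clean decomposition into a strongly clean one by a single algebraic repackaging. Strong $\Delta$-cleanness provides $r = e + d$ with $e^2 = e$, $d \in \Delta(R)$, and $ed = de$. Since $d$ is not itself required to be a unit, the pair $(e,d)$ cannot be used directly; instead I would swap $e$ for its complement $1 - e$ and absorb the correction term into the would-be unit.

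The key algebraic observation is that $2e - 1$ is a self-inverse unit, because $(2e-1)^2 = 4e^2 - 4e + 1 = 1$. Rewriting
\[
r = e + d = (1 - e) + \bigl((2e - 1) + d\bigr),
\]
Lemma \ref{1.1} yields at once that $(2e-1) + d \in U(R) + \Delta(R) = U(R)$. Thus the second summand is an honest unit of $R$, and the first summand $1 - e$ is clearly an idempotent.

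It remains to verify the commutativity required for strong cleanness. Since $ed = de$, the element $1 - e$ commutes with $d$; it also commutes with $2e - 1$ trivially, and therefore commutes with the unit $(2e - 1) + d$. Hence $r = (1-e) + \bigl((2e-1) + d\bigr)$ is a strongly clean representation of $r$, completing the argument.

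I do not anticipate a genuine obstacle here; the whole proof is a one-line repackaging. The only conceptual point worth flagging is the precise role of Lemma \ref{1.1}: it is exactly the stability of $U(R)$ under perturbation by elements of $\Delta(R)$ that lets us upgrade the (possibly non-unit) $\Delta$-part $d$ to the unit $(2e - 1) + d$, and the shift from $e$ to $1 - e$ is the device that makes the unit $2e - 1$ appear naturally in the expression.
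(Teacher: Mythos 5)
Your proof is correct and follows essentially the same route as the paper: the decomposition $r=(1-e)+\bigl((2e-1)+d\bigr)$ with $(2e-1)^2=1$ and Lemma \ref{1.1} absorbing $d$ into the unit. You even spell out the commutativity check that the paper leaves implicit.
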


\begin{proof}
Suppose that $r\in R$ is a strongly $\Delta$-clean element. Then, there exist an idempotent $e\in R$ and an element $d\in \Delta(R)$ such that $r=e+d$ with $ed=de$. Hence, $r=(1-e)+(2e-1+d)$. As $(2e-1)^2=1$, we plainly see that $2e-1+d\in U(R)$. Thus, $r$ is strongly clean, as needed.
\end{proof}

\begin{example}
(1) Every element in $\Delta(R)$ is strongly $\Delta$-clean.

(2) $u\in U(R)$ and $u$ is strongly $\Delta$-clean if, and only if, $u-1\in \Delta(R)$.

(3) $a\in R$ is strongly $\Delta$-clean if, and only if, $1-a\in R$ is strongly $\Delta$-clean.	
\end{example}

\begin{proof}
It is straightforward, so we drop off the arguments.
\end{proof}

\begin{lemma}\label{lemma 1}
For every \( e \in Id(R) \) and \( d \in \Delta(R) \), the containment \( 2ed \in \Delta(R) \) holds.
\end{lemma}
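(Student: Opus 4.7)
My plan is to verify the defining property of $\Delta(R)$ directly: for an arbitrary $u \in U(R)$, I need to show $1 - 2edu \in U(R)$. The main algebraic fact I want to have available first is that $\Delta(R)$ is closed under left and right multiplication by units. I would record this as a preliminary observation, since it is a standard property of $\Delta$ (cf.\ \cite[Exercise 4.24]{lame} and \cite{lmr}). A one-line justification is that for $a \in \Delta(R)$ and $v, w \in U(R)$, one has $(1 - vaw)v = v(1 - awv)$, so $1 - vaw$ is a unit iff $1 - a(wv)$ is a unit, and $wv \in U(R)$ gives the latter by membership of $a$ in $\Delta(R)$.

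Next I would use the elementary identity $2e = 1 + (2e-1)$, noting that $f := 2e-1$ satisfies $f^{2} = 1$ and is therefore a unit (with $f^{-1} = f$). This converts the expression into
\[
2ed \;=\; d + fd, \qquad 1 - 2edu \;=\; (1 - du) - fdu.
\]
Because $d \in \Delta(R)$ and $u \in U(R)$, the element $w := 1 - du$ lies in $U(R)$ by the very definition of $\Delta(R)$. I then factor:
\[
1 - 2edu \;=\; w - fdu \;=\; w\bigl(1 - w^{-1}fdu\bigr).
\]
Since $w^{-1}$, $f$, and $u$ are all units, the preliminary closure observation gives $w^{-1}fdu \in \Delta(R)$. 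Applying the definition of $\Delta(R)$ once more with the unit $1 \in U(R)$ yields $1 - w^{-1}fdu \in U(R)$, and multiplication by the unit $w$ gives $1 - 2edu \in U(R)$. Since $u \in U(R)$ was arbitrary, this establishes $2ed \in \Delta(R)$.

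The only step that requires any thought is the closure of $\Delta(R)$ under unit multiplication; once that is in hand, the rest is the trick $2e = 1 + (2e-1)$ combined with a single factor-out-a-unit manipulation. I do not expect any serious obstacle — the proof is genuinely short once the right decomposition $2ed = d + fd$ is spotted.
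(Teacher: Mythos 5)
Your argument is correct. It shares its backbone with the paper's proof: both exploit the decomposition $2ed = d + (2e-1)d$ together with the fact that $f = 2e-1$ is a unit (since $f^2=1$) and that $\Delta(R)$ is closed under multiplication by units. Where you diverge is in the last step: the paper simply cites that $\Delta(R)$ is a subring, so the sum $d + fd$ of two elements of $\Delta(R)$ again lies in $\Delta(R)$, and is done. You instead avoid invoking additive closure of $\Delta(R)$ altogether and verify the defining condition directly, writing $1 - 2edu = (1-du) - fdu = w\bigl(1 - w^{-1}fdu\bigr)$ with $w = 1-du \in U(R)$ and then applying the unit-multiplication closure to $w^{-1}fdu$. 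This factor-out-a-unit manipulation is essentially a hands-on proof, in this special case, of the closure of $\Delta(R)$ under adding an element of $\Delta(R)$ to another; it buys you self-containedness (only the definition of $\Delta(R)$ and closure under unit multiplication are needed, both of which you justify) at the cost of a few extra lines, whereas the paper's version is shorter but leans on the cited structural fact that $\Delta(R)$ is a subring. Both routes are sound.
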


\begin{proof}
For every \( e = e^2 \in R \), we have $(2e-1)^2=1$ and hence \(2e-1\in U(R) \). So, applying \cite[Lemma 1(2)]{lmr}, it follows that, for each \( d \in \Delta(R) \), the inclusion \((2e-1)d\in \Delta(R) \) is valid. Since \( \Delta(R) \) is a subring of \( R \), we obtain \( 2ed \in \Delta(R) \), as required.
\end{proof}

\begin{lemma} \label{lemma 2}
Let \( R \) be a strongly $\Delta$-clean ring, and \( a \in R \). If \( a^2 \in \Delta(R) \), then \( a \in \Delta(R) \).
\end{lemma}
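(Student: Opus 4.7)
The plan is to exploit the strongly $\Delta$-clean representation of $a$ itself and then kill the idempotent part modulo $J(R)$. Since $R$ is strongly $\Delta$-clean, I would first write
\[
a = e + d, \qquad e^2 = e,\ d \in \Delta(R),\ ed = de.
\]
Because $e$ and $d$ commute, squaring gives $a^2 = e + 2ed + d^2$.

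Next, I would argue that $e$ must lie in $\Delta(R)$. The summand $d^2$ is in $\Delta(R)$ since $\Delta(R)$ is a subring of $R$, and the summand $2ed$ is in $\Delta(R)$ by Lemma \ref{lemma 1}. Using the hypothesis $a^2 \in \Delta(R)$ and again the subring property, I can then solve
\[
e = a^2 - 2ed - d^2 \in \Delta(R).
\]

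To finish, I would invoke the fact that $\Delta(R) \subseteq J(R)$ (this is exactly the content of the description of $\Delta(R)$ as the largest subring of $J(R)$ closed under unit multiplication, as recalled in the introduction). Hence $e$ is an idempotent lying in the Jacobson radical, and therefore $e = 0$. Consequently $a = d \in \Delta(R)$, as desired.

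The computations are all routine; the only conceptual point worth being careful about is the last step, namely remembering that $\Delta(R) \subseteq J(R)$ so that idempotents inside $\Delta(R)$ are forced to vanish. Everything else is a direct calculation using the commuting decomposition and the closure of $\Delta(R)$ under addition, multiplication, and (via Lemma \ref{lemma 1}) multiplication by the unit $2e-1$.
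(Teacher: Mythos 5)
Your computation coincides with the paper's own proof up to the very last step: both arguments write $a=e+d$ with $ed=de$, expand $a^{2}=e+2ed+d^{2}$, and solve $e=a^{2}-2ed-d^{2}\in\Delta(R)$ using Lemma \ref{lemma 1} together with the fact that $\Delta(R)$ is a subring. The gap is in how you kill the idempotent. The inclusion $\Delta(R)\subseteq J(R)$ that you invoke is false in general: with the definition $\Delta(R)=\{x\in R\mid 1-xu\in U(R)\text{ for all }u\in U(R)\}$ one only gets $J(R)\subseteq\Delta(R)$ (membership in $J(R)$ requires surviving the test against \emph{all} ring elements, membership in $\Delta(R)$ only against units), and the containment can be strict --- this is precisely why $\Delta(R)$ is studied as a \emph{generalization} of the Jacobson radical, and the introduction of the paper itself says that $\Delta(R)$ contains $J(R)$. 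For instance, for $R=\mathbb{Q}[[t]][x]$ one has $J(R)=0$ while $\Delta(R)=t\mathbb{Q}[[t]]\neq 0$. The phrase in the abstract about ``the maximal subring of $J(R)$'' is a misstatement of the Leroy--Matczuk description and cannot be used as a lemma. Note also that you cannot rescue the step by citing the later fact that $\Delta(R)=J(R)$ holds in strongly $\Delta$-clean rings (a consequence of Theorem \ref{1} and Corollary \ref{booli}), because those results rest on Lemma \ref{lemma 4}, which in turn uses the present lemma; that route is circular.

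Fortunately, all you actually need is that $\Delta(R)$ contains no nonzero idempotent, and this is true and immediate from the definition: if $e=e^{2}\in\Delta(R)$, then taking $u=1$ gives $1-e\in U(R)$, and from $(1-e)e=0$ we obtain $e=(1-e)^{-1}(1-e)e=0$. This is exactly the identity $\Delta(R)\cap Id(R)=\{0\}$ that the paper uses at the same point. With that one-line replacement your argument is correct and agrees with the paper's proof.
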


\begin{proof}
Assume that \( a = e + d \) is a strongly $\Delta$-clean representation. We write \( a^2 = e + 2ed + d^2 \). Thanks to Lemma \ref{lemma 1}, we get \[ e = a^2 - 2ed - d^2 \in \Delta(R) \cap Id(R) = 0 ,\] which implies \( e = 0 \). Thus, \( a = d \in \Delta(R) \), as wanted.
\end{proof}

As two consequences, we yield:

\begin{corollary} \label{subset nil}
Let \( R \) be a strongly $\Delta$-clean ring. Then, \( \text{Nil}(R) \subseteq \Delta(R) \).
\end{corollary}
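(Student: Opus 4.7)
The plan is to reduce this directly to Lemma \ref{lemma 2} by a descent on powers of a nilpotent. Given \(a\in\text{Nil}(R)\), pick the smallest positive integer \(n\) with \(a^n=0\); since \(0\in\Delta(R)\) (recall that \(\Delta(R)\) is a subring of \(R\)), we then have \(a^n\in\Delta(R)\) for free. The whole argument rests on lifting this membership down from a high power of \(a\) to \(a\) itself.

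The key observation is that Lemma \ref{lemma 2} is tailor-made for such a descent: if \(b\in R\) satisfies \(b^2\in\Delta(R)\), then \(b\in\Delta(R)\). Choose \(k\in\mathbb{N}\) large enough that \(2^k\geq n\); then \(a^{2^k}=0\in\Delta(R)\). Setting \(b=a^{2^{k-1}}\), we have \(b^2=a^{2^k}\in\Delta(R)\), so Lemma \ref{lemma 2} gives \(a^{2^{k-1}}\in\Delta(R)\). Iterating this step, I would show by downward induction on \(j\) that \(a^{2^j}\in\Delta(R)\) for every \(j=k,k-1,\dots,1,0\), and the case \(j=0\) yields the desired conclusion \(a\in\Delta(R)\).

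There is no real obstacle here once Lemma \ref{lemma 2} is in hand; the only point requiring mild care is the very first reduction, namely the observation \(0\in\Delta(R)\), which follows since \(\Delta(R)\) is known to be a subring of \(R\) (and in fact is contained in \(J(R)\)). Everything else is a routine finite iteration. Consequently the containment \(\text{Nil}(R)\subseteq\Delta(R)\) is immediate, and the corollary is established.
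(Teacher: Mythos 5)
Your descent via iterated application of Lemma~\ref{lemma 2} (from $a^{2^k}=0\in\Delta(R)$ down to $a\in\Delta(R)$) is correct and is exactly the argument the paper intends, since the corollary is stated as an immediate consequence of that lemma with no further proof given. One small slip: your parenthetical claims $\Delta(R)\subseteq J(R)$, whereas the containment used throughout the paper is $J(R)\subseteq\Delta(R)$; this does not affect your argument, as $0\in\Delta(R)$ is immediate from the definition.
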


\begin{corollary} \label{com idim}
Let $R$ be a strongly $\Delta$-clean ring. Then, for every $r \in R$ and $e \in \mathrm{Id}(R)$, $er - re \in \Delta(R)$.
\end{corollary}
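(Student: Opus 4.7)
The plan is to reduce the commutator $er-re$ to a sum of two nilpotent elements and then invoke Corollary \ref{subset nil} together with the fact (used throughout the excerpt) that $\Delta(R)$ is a subring, hence closed under addition.

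The key identity I would start from is the Peirce-style decomposition
\[
er - re = \bigl(er(1-e)\bigr) - \bigl((1-e)re\bigr),
\]
which is immediate by expanding the right-hand side and observing that the $ere$ terms cancel. The advantage of this rewriting is that each of the two summands on the right is manifestly square-zero: since $(1-e)e = 0$, we have
\[
\bigl(er(1-e)\bigr)^{2} = er(1-e)\,er(1-e) = er\bigl((1-e)e\bigr)r(1-e) = 0,
\]
and analogously $\bigl((1-e)re\bigr)^{2}=0$. Hence both $er(1-e)$ and $(1-e)re$ lie in $\mathrm{Nil}(R)$.

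At this point Corollary \ref{subset nil} applies: because $R$ is strongly $\Delta$-clean we have $\mathrm{Nil}(R)\subseteq \Delta(R)$, so both square-zero elements lie in $\Delta(R)$. Since $\Delta(R)$ is a subring of $R$ (this is the defining structural feature of $\Delta(R)$ that has been invoked repeatedly in the excerpt, e.g.\ in the proof of Lemma \ref{lemma 1}), it is in particular closed under subtraction, so $er-re \in \Delta(R)$, which is what we wanted.

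I do not foresee a serious obstacle here: the only slightly non-obvious ingredient is the choice to split $er-re$ via the idempotent $e$ on one side and $1-e$ on the other, and the remaining steps are purely formal consequences of the previously established Corollary \ref{subset nil} and the subring property of $\Delta(R)$. The proof thus occupies only a few lines once the decomposition is written down.
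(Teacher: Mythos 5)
Your proposal is correct and coincides with the paper's own argument: the same decomposition $er-re = er(1-e) - (1-e)re$ into two square-zero elements, followed by Corollary \ref{subset nil} and the closure of $\Delta(R)$ under subtraction. Nothing to add.
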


\begin{proof}
Since $[er(1-e)]^2 = 0 = [(1-e)re]^2$, one sees that Corollary \ref{subset nil} employs to write $er(1-e), (1-e)re \in \Delta(R)$. Moreover, since $\Delta(R)$ is closed under addition, one verifies that
\[
er - re = er(1-e) - (1-e)re \in \Delta(R), \qedhere
\]
as desired.
\end{proof}

\begin{lemma} \label{lemma 3}
Let \( R \) be a strongly $\Delta$-clean ring. Then, for any \( a \in R \), the containment \( a - a^2 \in \Delta(R) \) is fulfilled.
\end{lemma}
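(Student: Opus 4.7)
The plan is to exploit the strongly $\Delta$-clean representation of $a$ directly and reduce the claim to a short computation, where the closure properties of $\Delta(R)$ together with Lemma \ref{lemma 1} do all the work.

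First I would write $a = e + d$ for some idempotent $e$ and some $d \in \Delta(R)$ with $ed = de$. Using commutativity of $e$ and $d$, together with $e^2 = e$, I would expand
\[
a^2 = e^2 + 2ed + d^2 = e + 2ed + d^2,
\]
and therefore
\[
a - a^2 = (e + d) - (e + 2ed + d^2) = d - 2ed - d^2.
\]

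Next I would verify that each of the three summands on the right lies in $\Delta(R)$: the term $d$ is in $\Delta(R)$ by choice, the term $d^2$ is in $\Delta(R)$ because $\Delta(R)$ is a subring of $R$ (so closed under multiplication), and the term $2ed$ is in $\Delta(R)$ by Lemma \ref{lemma 1}. Since $\Delta(R)$ is closed under addition and subtraction, we conclude $a - a^2 \in \Delta(R)$, as required.

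There is really no obstacle here; the only subtlety is that one genuinely needs the commutativity $ed = de$ so that the cross-term in $a^2$ is exactly $2ed$ rather than $ed + de$, and one needs Lemma \ref{lemma 1} rather than just the subring property, because a priori $2ed$ is a product of an idempotent and an element of $\Delta(R)$, and $\Delta(R)$ is not itself known to be an ideal. Once those two facts are in place, the proof is just the line of algebra above.
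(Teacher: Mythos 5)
Your proof is correct and follows essentially the same route as the paper: both expand $a^2$ using the commuting representation $a=e+d$ to get $a-a^2=(d-d^2)-2ed$ and then invoke Lemma \ref{lemma 1} for the term $2ed$ together with the subring closure of $\Delta(R)$. Your version just spells out the individual membership checks more explicitly.
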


\begin{proof}
Assume \( a = e + d \) is a strongly $\Delta$-clean representation. We, thus, arrive at

\[ a - a^2 = (d - d^2) - 2ed. \]

But, according to Lemma \ref{lemma 1}, we can conclude that \( a - a^2 \in \Delta(R) \), as pursued.
\end{proof}

Let $R$ be a ring and suppose $\alpha : R \to R$ is a ring endomorphism; then, $R[x; \alpha]$ denotes the {\it ring of skew polynomials} over $R$ with multiplication defined by $xr = \alpha(r)x$ for all $r \in R$. In particular, $R[x] = R[x; 1_R]$ is the {\it ring of polynomials} over $R$.

\begin{example} \label{example 1}
Let \( R \) be an arbitrary ring. Then, \( R[x] \) is {\it not} a strongly $\Delta$-clean ring.
\end{example}

\begin{proof}
Owing to Lemma \ref{lemma 3}, we discover that \( x - x^2 \in \Delta(R[x]) \), and so \( 1 - x + x^2 \in U(R[x]) \), which is a contradiction, as asked for.
\end{proof}

Let $Nil_{*}(R)$ denote the {\it prime radical} (or, in other terms, the {\it lower nil-radical}) of a ring $R$, i.e., the intersection of all prime ideals of $R$. We know that $Nil_{*}(R)$ is a nil-ideal of $R$. Recall that a ring $R$ is said to be {\it 2-primal} if $Nil_{*}(R) = Nil(R)$. For example, each commutative or reduced ring is 2-primal.

\medskip

For an arbitrary endomorphism \( \alpha \) of \( R \), the ring \( R \) is called {\it \(\alpha\)-compatible} if, for any \( a, b \in R \), \( ab = 0 \) holds if, and only if, \( a\alpha(b) = 0 \) (see, e.g., \cite{ann}). In this case, it is apparent that \( \alpha \) is injective.

\medskip

The following equality appears to be worthy of documentation.

\begin{proposition} \label{pro 1}
Let \( R \) be simultaneously a 2-primal and \(\alpha\)-compatible ring. Then, \[\Delta(R[x, \alpha]) = \Delta(R) + Nil_{*}(R[x, \alpha])x.\]
\end{proposition}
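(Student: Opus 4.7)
The plan is to reduce the identity to two well-known facts about $R[x,\alpha]$ under the 2-primal and $\alpha$-compatibility assumptions, and then to extract the coefficient-wise structure of $\Delta(R[x,\alpha])$ directly from its definition. The two facts I would invoke (or cite from the literature on $\alpha$-compatible skew polynomial rings) are: (i) the prime radical of the skew polynomial ring is described coefficient-wise, $Nil_{*}(R[x,\alpha]) = Nil_{*}(R)[x,\alpha]$; and (ii) a polynomial $f = a_0 + a_1 x + \cdots + a_n x^n$ is a unit in $R[x,\alpha]$ if and only if $a_0 \in U(R)$ and $a_1,\ldots,a_n \in Nil_{*}(R)$. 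Both facts rest on the hypothesis that $\alpha(Nil_{*}(R)) \subseteq Nil_{*}(R)$ (a consequence of $\alpha$-compatibility) and that $Nil_{*}(R) = Nil(R)$ is an ideal (a consequence of 2-primality), which together guarantee that $Nil_{*}(R)[x,\alpha]$ is a well-defined nil-ideal. One further observation I would record is that the augmentation $\varepsilon : R[x,\alpha] \to R$ sending $f$ to its constant term is a ring homomorphism, because $xr = \alpha(r)x$ ensures that every cross-term in a product contributes only to positive degrees; consequently, the constant term of any unit of $R[x,\alpha]$ is a unit of $R$.

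For the containment $\Delta(R) + Nil_{*}(R[x,\alpha])x \subseteq \Delta(R[x,\alpha])$, I would take $f = d + hx$ with $d \in \Delta(R)$ and $h \in Nil_{*}(R[x,\alpha]) = Nil_{*}(R)[x,\alpha]$, fix an arbitrary unit $u = \sum u_k x^k$ of $R[x,\alpha]$ (so $u_0 \in U(R)$ and $u_k \in Nil_{*}(R)$ for $k \geq 1$ by (ii)), and verify that $1 - fu \in U(R[x,\alpha])$. A direct computation using $x^i r = \alpha^i(r) x^i$ shows that $fu$ has constant term $d u_0$ and that every higher-degree coefficient is a sum of products lying in the ideal $Nil_{*}(R)$, via $\alpha$-invariance of $Nil_{*}(R)$ and the fact that either factor in each product is already nilpotent. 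Hence $1 - fu$ has constant term $1 - d u_0 \in U(R)$ (because $d \in \Delta(R)$) together with higher coefficients in $Nil_{*}(R)$, and (ii) in reverse then yields $1 - fu \in U(R[x,\alpha])$, as required.

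For the reverse inclusion, take $f = a_0 + a_1 x + \cdots + a_n x^n \in \Delta(R[x,\alpha])$. Specializing $u$ to an arbitrary element $v \in U(R) \subseteq U(R[x,\alpha])$ and applying the augmentation $\varepsilon$ to $1 - fv \in U(R[x,\alpha])$ yields $1 - a_0 v \in U(R)$ for every $v \in U(R)$, i.e.\ $a_0 \in \Delta(R)$. Taking $u = 1$ instead and applying (ii) to $1 - f \in U(R[x,\alpha])$ forces $a_i \in Nil_{*}(R)$ for every $i \geq 1$. Rewriting $f = a_0 + (a_1 + a_2 x + \cdots + a_n x^{n-1})x$ then displays $f \in \Delta(R) + Nil_{*}(R)[x,\alpha]\cdot x$, which is $\Delta(R) + Nil_{*}(R[x,\alpha])x$ by (i). The main obstacle, such as it is, lies entirely in securing the pair of preparatory facts (i) and (ii) with exactly the form needed; once those are in hand, the argument reduces to a clean coefficient-by-coefficient verification, with 2-primality doing the work of making $Nil_{*}(R)$ an ideal and $\alpha$-compatibility doing the work of making it $\alpha$-invariant.
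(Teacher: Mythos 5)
Your proposal is correct and follows essentially the same route as the paper: both directions rest on the same two facts, namely the coefficient-wise description $Nil_{*}(R[x,\alpha]) = Nil_{*}(R)[x,\alpha]$ and the characterization of $U(R[x,\alpha])$ as polynomials with unit constant term and remaining coefficients in $Nil_{*}(R)$ (the paper cites these as Lemma 2.2 and Corollary 2.14 of the reference on constant products in skew polynomial rings). Your explicit coefficient computation for the containment $\Delta(R)+Nil_{*}(R[x,\alpha])x \subseteq \Delta(R[x,\alpha])$ is just a spelled-out version of the paper's observation that $1-uf \in U(R)+Nil_{*}(R[x,\alpha])x \subseteq U(R[x,\alpha])$, so there is no substantive difference.
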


\begin{proof}
Assuming \( f = \sum_{i=0}^{n} a_i x^i \in \Delta(R[x, \alpha]) \), then, for every \( u \in U(R) \), we find that \( 1 - uf \in U(R[x, \alpha]) \). Thus, invoking \cite[Corollary 2.14]{cheni}, \( 1 - ua_0 \in U(R) \) and, for each \( 1 \le  i \le  n \), \( ua_i \in Nil_{*}(R) \). Since \( Nil_{*}(R) \) is an ideal, we obtain \( a_0 \in \Delta(R) \) and, for each \( 1 \le  i \le  n \), \( a_i \in Nil_{*}(R) \). Moreover, as \( R \) is a 2-primal ring, \cite[Lemma 2.2]{cheni} applies to get that \( Nil_{*}(R)[x,\alpha] = Nil_{*}(R[x, \alpha]) \).

Conversely, assume \( f \in \Delta(R) + Nil_{*}(R[x, \alpha])x \) and \( u \in U(R[x,\alpha]) \). Then, appealing to \cite[Corollary 2.14]{cheni}, we have \( u \in U(R) + Nil_{*}(R[x, \alpha])x \). Since \( R \) is a 2-primal ring, it must be that \[ 1 - uf \in U(R) + Nil_{*}(R[x, \alpha])x \subseteq U(R[x, \alpha]) ,\] and thus \( f \in \Delta(R[x, \alpha]) \), as required.
\end{proof}

\begin{proposition}\label{pro 2}
Let \( R \) be both a 2-primal and \(\alpha\)-compatible ring, and let \(e = \sum_{i=0}^n e_i x^i \in Id(R[x, \alpha]) \). Then, \( e_0^2 = e_0 \) and, for every \( 1 \le i \le n \), \( e_i \in Nil(R) \) holds.
\end{proposition}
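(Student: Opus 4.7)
The plan is to split the claim into two independent pieces: the idempotency of $e_0$, and the nilpotency of $e_i$ for $1 \le i \le n$.

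For the first piece I would simply compare constant coefficients in the identity $e^2 = e$. Expanding the product via the skew-multiplication rule $xr = \alpha(r)x$ gives $e \cdot e = \sum_{i,j} e_i \alpha^i(e_j) x^{i+j}$, and the only contribution to the constant term is $e_0 \cdot e_0$. Matching this with the constant term of $e$ yields $e_0^2 = e_0$ immediately, without using either of the structural hypotheses.

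For the second piece, the strategy is to pass to the quotient modulo the prime radical. Since $R$ is $2$-primal and $\alpha$-compatible, $\alpha$ preserves $Nil(R) = Nil_*(R)$, and the equality $Nil_*(R[x,\alpha]) = Nil_*(R)[x,\alpha]$ (already invoked in the proof of Proposition \ref{pro 1} via \cite[Lemma 2.2]{cheni}) produces a ring isomorphism
\[
R[x,\alpha]/Nil_*(R[x,\alpha]) \;\cong\; \bar R[x,\bar\alpha],
\]
where $\bar R := R/Nil(R)$ is reduced and $\bar\alpha$ is the induced endomorphism. The plan is then to show that in a skew polynomial ring over a reduced, $\bar\alpha$-compatible ring every idempotent is a constant polynomial. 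This reduces to a short degree argument: if $\bar e = \sum_{i=0}^{n} \bar e_i x^i$ is idempotent with $n \ge 1$, then the leading coefficient of $\bar e^2$ is $\bar e_n \bar\alpha^n(\bar e_n)$, which must vanish because $\deg \bar e = n < 2n$; iterated use of $\bar\alpha$-compatibility converts this equation into $\bar e_n^2 = 0$, and reducedness of $\bar R$ forces $\bar e_n = 0$. Induction on the degree then yields $\bar e = \bar e_0$, so every $e_i$ with $1 \le i \le n$ lies in $Nil_*(R) = Nil(R)$, as required.

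The main obstacle is the descent of $\alpha$-compatibility to $\bar R$: one must check that $ab \in Nil(R)$ implies $a\alpha(b) \in Nil(R)$, in order to apply the reduced/compatibility argument above. In a $2$-primal $\alpha$-compatible ring this can be established by a direct computation combining nilpotency with the defining property of $\alpha$-compatibility, or, more expediently, by appealing to the fact (implicit in \cite{cheni}) that $R[x,\alpha]$ is itself $2$-primal under these hypotheses. Once this compatibility-descent step is secured, the remainder of the proof is nothing more than a coefficient comparison and a degree count.
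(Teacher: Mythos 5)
Your proposal is correct and follows essentially the same route as the paper: constant-term comparison gives $e_0^2=e_0$, and the higher coefficients are killed by a leading-coefficient induction using $e_n\alpha^n(e_n)=0$, $\alpha$-compatibility, and reduction modulo $Nil_*(R)$. The only difference is packaging — the paper strips leading terms one at a time (passing to $R/Nil_*(R)$ after the first step), whereas you work in the reduced quotient $\bar R[x,\bar\alpha]$ throughout — and the compatibility-descent fact you flag as the main obstacle is exactly what the paper's appeal to Lemma 2.1 of Hashemi--Moussavi \cite{hm} supplies.
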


\begin{proof}
It is evident that \( e_0^2 = e_0 \), so it suffices to show that, for each \( 1 \le i \le n \), \( e_i \in Nil(R) \) is true. To that end, since \( e^2 = e \), we have \( e_n \alpha^n(e_n) = 0 \). Likewise, because \( R \) is \(\alpha\)-compatible, in virtue of \cite[Lemma 2.1]{hm}, we find that \( e_n^2 = 0 \).

Now, assume \( f := e - e_n x^n \). Since \( e^2 = e \) and \( e_n \in Nil_*(R) \), we write \( f - f^2 \in Nil_*(R)[x, \alpha] \), and so \(\bar{f} = \bar{f}^2 \in (R/Nil_*(R))[x, \alpha] \). Thus, \( e_{n-1} \alpha^{n-1}(e_{n-1}) \in Nil_*(R) \).

Furthermore, since \( R \) is a \(\alpha\)-compatible ring, we apply \cite[Lemma 2.1]{hm} to deduce \( e_{n-1} \in Nil(R) \). Continuing the process in this manner, it can be shown that, for every \( 1 \le i \le n \), \( e_i \in Nil(R) \) holds, as stated.
\end{proof}

One observes that, in view of Example \ref{example 1}, the ring $R[x]$ is {\it not} strongly $\Delta$-clean. In the next lemma, we show what set can form the elements with strongly $\Delta$-clean representation in $R[x]$. For this purpose, we need a new notation, namely we denote all strongly $\Delta$-clean elements of the ring $R$ by $S\Delta C(R)$.

\begin{lemma}
Let \( R \) be a 2-primal and \(\alpha\)-compatible ring. Then, \[ S\Delta C(R[x, \alpha]) \subseteq S\Delta C(R) + Nil_*(R)[x, \alpha]x .\]
\end{lemma}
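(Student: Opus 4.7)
The plan is to take an arbitrary $f \in S\Delta C(R[x,\alpha])$ with strongly $\Delta$-clean decomposition $f = e + d$, where $e^2 = e$, $d \in \Delta(R[x,\alpha])$, and $ed = de$, and then to read off the constant coefficient to land in $S\Delta C(R)$ while confining everything else to the nil-radical part. Write $f = \sum_{i=0}^n f_i x^i$, $e = \sum_{i=0}^n e_i x^i$, and $d = \sum_{i=0}^n d_i x^i$.

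First, I would apply Proposition \ref{pro 2} to the idempotent $e$: this gives $e_0^2 = e_0$ and $e_i \in Nil(R)$ for each $i \geq 1$. Since $R$ is 2-primal, $Nil(R) = Nil_*(R)$, so $e_i \in Nil_*(R)$ for $i \geq 1$. Next, I would apply Proposition \ref{pro 1} to $d$: combined with the identity $Nil_*(R)[x,\alpha] = Nil_*(R[x,\alpha])$ (which holds by \cite[Lemma 2.2]{cheni} since $R$ is 2-primal and $\alpha$-compatible, and was used in the proof of Proposition \ref{pro 1}), this forces $d_0 \in \Delta(R)$ and $d_i \in Nil_*(R)$ for each $i \geq 1$.

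Now I would extract a strongly $\Delta$-clean representation of the constant term $f_0 = e_0 + d_0$. The two ingredients $e_0 \in Id(R)$ and $d_0 \in \Delta(R)$ are already in hand; the remaining issue is commutativity. Since $xr = \alpha(r)x$, the constant coefficient of the product $\sum_{i,j} e_i \alpha^i(d_j)\,x^{i+j}$ contains only the pair $i=j=0$, giving $(ed)_0 = e_0 d_0$ and similarly $(de)_0 = d_0 e_0$. From $ed = de$ we therefore read off $e_0 d_0 = d_0 e_0$, so $f_0 \in S\Delta C(R)$.

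Finally, for each $i \geq 1$ the coefficient $f_i = e_i + d_i$ lies in $Nil_*(R)$ because both summands do and $Nil_*(R)$ is an ideal. Thus
\[
f = f_0 + \sum_{i=1}^n f_i x^i \in S\Delta C(R) + Nil_*(R)[x,\alpha]\,x,
\]
which is exactly the desired inclusion. I do not expect a serious obstacle here: the two propositions just proved do most of the bookkeeping, and the only subtle point is that the skew multiplication $xr = \alpha(r)x$ does not interfere with the constant term of a product, so the commutation relation $ed = de$ descends cleanly to $e_0 d_0 = d_0 e_0$.
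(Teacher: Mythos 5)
Your proposal is correct and follows essentially the same route as the paper's own proof: apply Proposition \ref{pro 2} to the idempotent part and Proposition \ref{pro 1} to the $\Delta$-part, read off the constant coefficients to get $f_0\in S\Delta C(R)$, and absorb the higher coefficients into $Nil_*(R)$. If anything, you are slightly more careful than the paper in justifying $e_0d_0=d_0e_0$ via the constant coefficient of the skew product, a step the paper states without comment.
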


\begin{proof}
Write \( f = \sum_{i=0}^n f_i x^i \in S\Delta C(R[x, \alpha]) \) and assume \( f = \sum_{i=0}^n e_i x^i + \sum_{i=0}^n d_i x^i \) is a strongly $\Delta$-clean representation. Then, we derive $f_0=e_0+d_0$ with $e_0d_0=d_0e_0$. Thanking to Propositions \ref{pro 1} and \ref{pro 2}, we infer that \( e_0 = e_0^2 \) and \( d_0 \in \Delta(R) \); thus, clearly, \( f_0 \in S\Delta C(R) \). Moreover, employing Proposition \ref{pro 2}, for every \( 1 \le i \le n \), \( e_i\in Nil_*(R) \). Also, Proposition \ref{pro 1} works to get that \( d_i\in Nil_*(R) \). Hence, we conclude \( f_i = e_i + d_i \in Nil_*(R) \), as needed.
\end{proof}

As usual, a ring $R$ is called {\it reduced} if it contains no non-zero nilpotent elements, that is, $Nil(R)=(0)$.

\begin{lemma} \label{lemma 4}
Let \( R \) be a strongly $\Delta$-clean ring. Then, the factor-ring \( R/J(R) \) is reduced.
\end{lemma}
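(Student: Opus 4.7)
The plan is to establish the contrapositive form: if $a \in R$ satisfies $a^{2} \in J(R)$, then $a \in J(R)$. This suffices, since any nilpotent class in $R/J(R)$ of index $n$ yields a square-zero class by passing to $\bar{a}^{\lceil n/2 \rceil}$, and iteration on $n$ then gives the full reduced conclusion. Since $J(R) \subseteq \Delta(R)$, the hypothesis $a^{2} \in J(R)$ forces $a^{2} \in \Delta(R)$, whence Lemma~\ref{lemma 2} yields $a \in \Delta(R)$. This already guarantees $1 - au \in U(R)$ for every $u \in U(R)$; what remains is to upgrade this to $1 - ra \in U(R)$ for every $r \in R$.

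To handle an arbitrary $r$, invoke the strong clean decomposition from Lemma~\ref{1.5}: write $r = e + v$ with $e^{2} = e$, $v \in U(R)$, and $ev = ve$. Then
\[
1 - ra = (1 - va) - ea,
\]
and since $va \in \Delta(R)$ the element $1 - va$ is a unit of $R$; set $w := (1 - va)^{-1}$. Factoring out this unit,
\[
1 - ra = (1 - va)\bigl(1 - wea\bigr),
\]
so the problem reduces to showing $1 - wea \in U(R)$, which will follow if $wea \in \Delta(R)$.

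The main obstacle is precisely placing $wea$, equivalently $ea$, into $\Delta(R)$. The tools at hand are Lemma~\ref{lemma 1}, which puts $2ea$ in $\Delta(R)$, and Corollary~\ref{com idim}, which puts the commutator $ea - ae$ in $\Delta(R)$. In characteristic different from $2$ one simply divides Lemma~\ref{lemma 1} by $2$; in general, the gap is bridged by exploiting $a^{2} \in J(R)$ to control higher products such as $(ea)^{k}$ modulo $J(R)$ and reapplying Lemma~\ref{lemma 2}. Closure of $\Delta(R)$ under multiplication by units then transfers $ea \in \Delta(R)$ to $wea \in \Delta(R)$, yielding $1 - wea \in U(R)$ and hence $1 - ra \in U(R)$ for every $r \in R$. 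Thus $a \in J(R)$, as required, and $R/J(R)$ is reduced.
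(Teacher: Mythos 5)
Your overall frame (reduce to ``$a^2\in J(R)\Rightarrow a\in J(R)$'', get $a\in\Delta(R)$ from Lemma~\ref{lemma 2}, then verify $1-ra\in U(R)$ for all $r$) is sound, and the factorization $1-ra=(1-va)\bigl(1-(1-va)^{-1}ea\bigr)$ is algebraically correct. But the argument collapses exactly at the point you yourself flag as ``the main obstacle'': you never establish $ea\in\Delta(R)$, and none of the bridging devices you gesture at can work. Since $\Delta(R)$ is a subring but not an ideal, left multiplication by the idempotent $e$ need not preserve it. Lemma~\ref{lemma 1} only yields $2ea\in\Delta(R)$, and division by $2$ is never available in this setting: Lemma~\ref{lemma 3} applied to $-1$ gives $2\in\Delta(R)$, so $2$ cannot be a unit in any nonzero strongly $\Delta$-clean ring, and the ``characteristic different from $2$'' case is vacuous. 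The proposal to ``control $(ea)^k$ modulo $J(R)$'' is circular, because control of nilpotence modulo $J(R)$ is precisely what the lemma is trying to establish; and reapplying Lemma~\ref{lemma 2} to $ea$ would require $(ea)^2\in\Delta(R)$, where $(ea)^2=e(aea)$ runs into the same unproved step (one can check $aea\in\Delta(R)$ using $a^2\in J(R)$ and $ea-ae\in\Delta(R)$, but the outer left factor $e$ is again uncontrolled). So the proof as written does not close.

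The paper sidesteps the need for $ea\in\Delta(R)$ entirely. It expands $(1-ra)(1+ar)=(1-ra^2r)+(ar-ra)$, where $1-ra^2r$ is a unit because $a^2\in J(R)$, and then shows only that the commutator $ar-ra$ lies in $\Delta(R)$. For that it uses the strongly $\Delta$-clean decomposition $r=e+d$ (rather than your strongly clean one $r=e+v$): then $ar-ra=(ae-ea)+(ad-da)$, where the first summand is in $\Delta(R)$ by Corollary~\ref{com idim} and the second because $a,d\in\Delta(R)$ and $\Delta(R)$ is a subring. Hence $(1-ra)(1+ar)\in U(R)+\Delta(R)=U(R)$ by Lemma~\ref{1.1}, which gives the required invertibility. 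If you want to salvage your write-up, replace the attempt to place $ea$ in $\Delta(R)$ by this commutator computation.
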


\begin{proof}
Assume \( x^2 \in J(R) \subseteq \Delta(R) \). Thus, thanks to Lemma \ref{lemma 2}, we discover that \( x \in \Delta(R) \). Let \( r \in R \). Since \( 1 - r^2x^2 \in U(R) \), we know \( u := 1 - rx^2r \in U(R) \). Therefore,

\[
(1 - rx)(1 + xr) = 1 - rx + xr - rx^2r = xr - rx + u.
\]

\medskip

It now suffices to prove that \( xr - rx \in \Delta(R) \). To this target, assume \( r = e + d \) is a strongly $\Delta$-clean representation. Then, it must be that

\[
xr - rx = x(e + d) - (e + d)x = xe - ex + (xd - dx),
\]

where, by virtue of Corollary \ref{com idim}, we detect that $xe - ex \in \Delta(R)$.

On the other hand, since $x, d \in \Delta(R)$ and $\Delta(R)$ is closed under addition, we obtain $xr - rx= (xe - ex) + xd - dx \in \Delta(R)$, as claimed.

Hence,

\[
(1 - rx)(1 + xr) \in U(R) + \Delta(R) \subseteq U(R).
\]

Since \( r \) was arbitrary, we conclude \( x \in J(R) \), as expected.
\end{proof}

\begin{lemma}\label{lemma 5}
Let \( R \) be a strongly $\Delta$-clean ring. Then, \( 2 \in J(R) \).
\end{lemma}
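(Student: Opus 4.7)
The plan is to first leverage Lemma \ref{lemma 3} at $a = 2$: this produces $2 - 4 = -2 \in \Delta(R)$, and since $\Delta(R)$ is a subring we conclude $2 \in \Delta(R)$. To promote this membership to $2 \in J(R)$, it suffices by the standard characterisation of the Jacobson radical to verify $1 + 2r \in U(R)$ for every $r \in R$.

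So fix $r \in R$ and fix a strongly $\Delta$-clean representation $r = e + d$, with $e = e^2$, $d \in \Delta(R)$, and $ed = de$. Then $1 + 2r = (1+2e) + 2d$, and my goal is to show (i) $1 + 2e \in U(R)$ and (ii) $2d \in \Delta(R)$. Given these, the factorisation
\[
1 + 2r = (1 + 2e)\bigl(1 + (1 + 2e)^{-1} \cdot 2d\bigr)
\]
finishes the proof: by \cite[Lemma~1(2)]{lmr} (already invoked in the proof of Lemma \ref{lemma 1}), $\Delta(R)$ is closed under left multiplication by units, so $(1+2e)^{-1} \cdot 2d \in \Delta(R)$; and from the very definition of $\Delta(R)$, taking $u = 1$ together with the fact that $\Delta(R)$ is a subring, one has $1 + \Delta(R) \subseteq U(R)$, so the second factor is a unit. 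Hence $1 + 2r$ is a product of two units.

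Item (ii) is immediate since $\Delta(R)$ is a subring containing both $2$ and $d$. For item (i), the key input is the fact that $2e-1 \in U(R)$ (it is self-inverse, as $(2e-1)^2 = 1$), combined with the identity
\[
(2e - 1)(2e + 1) = 4e^2 - 1 = 4e - 1.
\]
Applying Lemma \ref{lemma 1} with the choice $d = 2 \in \Delta(R)$ yields $4e = 2e \cdot 2 \in \Delta(R)$, so $4e - 1 = -(1 - 4e) \in U(R)$. Cancelling the unit $2e - 1$ in the displayed identity then forces $2e + 1 = 1 + 2e \in U(R)$.

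The main obstacle is (i), i.e.\ producing a unit from $1 + 2e$ for an arbitrary idempotent $e$; this is false in general rings. It works here only because the strongly $\Delta$-clean hypothesis has already trapped $2$ inside $\Delta(R)$, after which the trick of multiplying by the self-inverse element $2e - 1$ and applying Lemma \ref{lemma 1} converts the idempotent obstruction into a controllable element of $\Delta(R)$. Everything else is a routine manipulation of the subring/closure properties of $\Delta(R)$.
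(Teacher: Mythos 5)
Your proposal is correct. The opening move is the same as the paper's: apply Lemma \ref{lemma 3} to $a=2$ to trap $2$ (hence also $4$) inside $\Delta(R)$. After that the two arguments diverge. The paper writes $1-4r = 1-2(2e)-4d$ for a strongly $\Delta$-clean representation $r=e+d$, uses Lemma \ref{lemma 1} to place $2(2e)$ and $4d$ in $\Delta(R)$, concludes $4\in J(R)$, and then invokes Lemma \ref{lemma 4} (reducedness of $R/J(R)$) to pass from $2^2\in J(R)$ to $2\in J(R)$. You instead attack $1+2r$ directly, splitting it as $(1+2e)+2d$ and proving $1+2e\in U(R)$ via the identity $(2e-1)(2e+1)=4e-1$ together with $4e\in\Delta(R)$ from Lemma \ref{lemma 1}; the factorisation $1+2r=(1+2e)\bigl(1+(1+2e)^{-1}\cdot 2d\bigr)$ then closes the argument using the unit-stability of $\Delta(R)$ from \cite[Lemma~1(2)]{lmr}. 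Both routes are sound. The paper's version is shorter because Lemma \ref{lemma 4} is already available; yours buys independence from Lemma \ref{lemma 4} (whose proof is the most involved of the preparatory lemmas), at the price of the extra unit computation for $1+2e$ and an explicit appeal to the closure of $\Delta(R)$ under multiplication by units.
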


\begin{proof}
Referring to Lemma \ref{lemma 3}, we obtain \( 2 \in \Delta(R) \), which gives \( 4 = 2 + 2 \in \Delta(R) \). Choose \( r \in R \), and let \( r = e + d \) be a strongly $\Delta$-clean representation. Then, utilizing Lemma \ref{lemma 1}, we deduce

\[
1 - 4r = 1 - 4e - 4d = 1 - 2(2e) - 4d \in 1 + \Delta(R) \subseteq U(R).
\]

\medskip

Thus, \( 4 \in J(R) \).

Furthermore, since \( 2^2 =4 \in J(R) \), Lemma \ref{lemma 4} allows us to conclude that \( 2 \in J(R) \), as promised.
\end{proof}

A ring is called \(\Delta U\), provided \(1 + \Delta(R) = U(R)\) (see \cite{kkqt}).

\begin{lemma}\label{du ring}
Every strongly $\Delta$-clean ring is a \(\Delta U\) ring.
\end{lemma}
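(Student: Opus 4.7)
The plan is to verify both inclusions in $1 + \Delta(R) = U(R)$, with only the second requiring the strong $\Delta$-clean hypothesis. The forward inclusion $1 + \Delta(R) \subseteq U(R)$ is essentially free from Lemma \ref{1.1}: if $d \in \Delta(R)$, then $1 + d = 1 + d$ with $1 \in U(R)$, so $1 + d \in U(R) + \Delta(R) = U(R)$.

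For the reverse inclusion $U(R) \subseteq 1 + \Delta(R)$, I would pick an arbitrary $u \in U(R)$ and invoke the hypothesis to fix a strongly $\Delta$-clean representation $u = e + d$ with $e \in \mathrm{Id}(R)$, $d \in \Delta(R)$, and $ed = de$. The idea is then to show that the idempotent part $e$ is forced to be $1$. Rearranging, $e = u - d$, and Lemma \ref{1.1} (applied to the unit $u$ and the element $-d \in \Delta(R)$) guarantees that $u - d \in U(R) + \Delta(R) = U(R)$. So $e$ is simultaneously an idempotent and a unit; multiplying $e^2 = e$ by $e^{-1}$ forces $e = 1$, whence $u = 1 + d \in 1 + \Delta(R)$, as required.

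The main (very modest) obstacle is simply to recognise that Lemma \ref{1.1} does the heavy lifting in both inclusions; in particular, one avoids any appeal to an a priori left-right symmetric version of the defining property of $\Delta(R)$, which is not explicitly established earlier in the paper. It is also worth flagging that the commutativity relation $ed = de$ in the S$\Delta$C representation is never actually used — any $\Delta$-clean decomposition of a unit already forces the idempotent part to equal $1$. This minor observation is a natural thing to record in passing, since it will be convenient later when analysing elements of $U(R)$ under weaker cleanness hypotheses.
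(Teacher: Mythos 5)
Your proof is correct and follows essentially the same route as the paper's: the paper also takes a unit $u$ with S$\Delta$C representation $u=e+d$, observes $e=u-d\in U(R)+\Delta(R)\subseteq U(R)$ via Lemma \ref{1.1}, and concludes $e=1$ from $U(R)\cap Id(R)=\{1\}$. Your additional remarks (that the forward inclusion is free and that $ed=de$ is never used) are accurate but do not change the argument.
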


\begin{proof}
Choose \(u \in U(R)\) and suppose \(u = e + d\) is a strongly $\Delta$-clean representation. Then, one inspects that \(e = u - d \in U(R) + \Delta(R) \subseteq U(R) \cap Id(R) = \{1\}\), as required.
\end{proof}

Let $R$ be a ring, and given $a \in R$. In what follows below, set $ann_{l}a := \{ r \in R : ra = 0\}$ and $ann_{r}a := \{r \in  R: ar = 0\}$.

\begin{lemma}\label{ann}
Let $R$ be a ring and suppose $a = e + d$ is a strongly $\Delta$-clean representation in $R$. Then, $\text{ann}_l(a) \subseteq \text{ann}_l(e)$ and $\text{ann}_r(a) \subseteq \text{ann}_r(e)$.
\end{lemma}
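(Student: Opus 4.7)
The plan is to reduce each inclusion to cancelling an invertible element from a relation of the form $re(1+d) = 0$ (respectively $(1+d)er = 0$), using only the three hypotheses on the decomposition $a = e + d$: namely $e^2 = e$, $ed = de$, and $d \in \Delta(R)$. The key preliminary fact I would establish first is that $1 + d \in U(R)$. This is immediate from the defining property of $\Delta(R)$ upon choosing the unit $u = -1$, which gives $1 - d(-1) = 1 + d \in U(R)$; alternatively, it follows at once from Lemma \ref{1.1}, since $1 + d \in U(R) + \Delta(R) = U(R)$.

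For the left annihilator inclusion I would take $r \in \text{ann}_l(a)$ and expand $ra = 0$ into $re + rd = 0$, that is $re = -rd$. Multiplying this on the right by $1+d$ and exploiting $ed = de$ together with $e^2 = e$, I would compute
\[
re(1+d) \;=\; re + r(ed) \;=\; re + r(de) \;=\; re + (rd)e \;=\; re + (-re)e \;=\; re - re^2 \;=\; 0.
\]
Since $1 + d$ is a unit, right-cancellation forces $re = 0$, whence $r \in \text{ann}_l(e)$.

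The right annihilator case is completely symmetric: from $r \in \text{ann}_r(a)$ one obtains $er = -dr$, and the parallel computation
\[
(1+d)er \;=\; er + (de)r \;=\; er + (ed)r \;=\; er + e(dr) \;=\; er + e(-er) \;=\; er - e^2 r \;=\; 0,
\]
followed by left-cancellation of the unit $1+d$, forces $er = 0$, so $r \in \text{ann}_r(e)$. I do not anticipate any serious obstacle; the only subtle point is recognising that the equation $re = -rd$ must be paired with the unit $1 + d$ rather than $1 - d$, a choice dictated by the minus sign that appears upon substituting $rd = -re$ after applying the commutation $ed = de$.
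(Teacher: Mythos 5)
Your proof is correct. It takes a somewhat more direct route than the paper's: the paper first passes to $a^2$, using Lemma \ref{lemma 1} to write $a^2 = e + d'$ with $d' = 2ed + d^2 \in \Delta(R)$, then multiplies the relation $re + rd' = 0$ on the right by $e$ (implicitly using $ed' = d'e$) to reach $re(1+d') = 0$ and cancels the unit $1+d'$. You instead stay with the original decomposition, turn $ra=0$ into $re = -rd$, and multiply by $1+d$ itself; this shortens the argument and removes the dependence on Lemma \ref{lemma 1} and on the auxiliary element $d'$ altogether. Both proofs rest on the same mechanism --- manufacturing an identity of the form $re(1+\delta) = 0$ with $\delta \in \Delta(R)$ and cancelling the unit $1+\delta$ --- and your justification that $1+d \in U(R)$ (via $u=-1$ in the definition of $\Delta(R)$, or via Lemma \ref{1.1}) is sound, as is the symmetric right-hand computation. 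The difference is one of economy rather than substance, with your version being slightly leaner.
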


\begin{proof}
Assume $ra = 0$. It follows from Lemma \ref{lemma 1} that there is $d' \in \Delta(R)$ such that $a^2 = e + d'$. Since $ra = 0$, we have $re + rd' = 0$. Now, multiplying by $e$ from the right, we get $re + red' = 0$, so $re(1 + d') = 0$. Since $d' \in \Delta(R)$, it must be that $1 + d' \in U(R)$, which forces $re = 0$. Thus, $r \in \text{ann}_l(e)$. Similarly, it can be shown that $\text{ann}_r(a) \subseteq \text{ann}_r(e)$, as formulated.
\end{proof}

\begin{lemma}
Let $R$ be a ring and $e \in R$ an idempotent. If $a \in eRe$ is a strongly $\Delta$-clean element in $R$, then $a$ is a strongly $\Delta$-clean element in $eRe$.
\end{lemma}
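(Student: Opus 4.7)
The plan is to produce a strongly $\Delta$-clean decomposition of $a$ inside $eRe$ by restricting the given one in $R$, using Lemma \ref{ann} to pull the idempotent part into $eRe$ and a standard unit-lifting argument to carry $\Delta$-membership down to the corner ring.

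First I would fix a strongly $\Delta$-clean representation $a = f + d$ in $R$, where $f^2 = f$, $d \in \Delta(R)$, and $fd = df$. Since $a \in eRe$, both $(1-e)a$ and $a(1-e)$ vanish, so $1-e$ lies in $\text{ann}_l(a) \cap \text{ann}_r(a)$. Applying Lemma \ref{ann} yields $(1-e)f = 0 = f(1-e)$, hence $f = ef = fe \in eRe$. Consequently $d = a - f \in eRe$; moreover the idempotency of $f$ persists in $eRe$ (whose identity is $e$) and the commutation $fd = df$ is preserved.

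The central step is to verify that $d \in \Delta(eRe)$. I would pick an arbitrary $u \in U(eRe)$ with two-sided inverse $v \in eRe$, and form $u + (1-e) \in R$, which is a unit in $R$ with inverse $v + (1-e)$. Since $d \in \Delta(R)$, we get $1 - d(u + (1-e)) \in U(R)$; but $d \in eRe$ forces $d(1-e) = 0$, and this reduces to $1 - du \in U(R)$. Noting $e - du \in eRe$ and the decomposition $1 - du = (e - du) + (1-e)$, the standard observation that $z \in eRe$ is a unit in $eRe$ if and only if $z + (1-e)$ is a unit in $R$ (proved by sandwiching inverses with $e$ on both sides) produces $e - du \in U(eRe)$. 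Since $u$ was arbitrary, $d \in \Delta(eRe)$, and therefore $a = f + d$ is an $S\Delta C$ representation within $eRe$.

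The main obstacle, as I see it, is precisely this last transfer of $\Delta$-membership between the two rings: the definition of $\Delta$ is stated using the unit $1$ of the ambient ring, whereas $eRe$ has a different identity $e$, so one must carefully bridge the two unit groups via the $u \mapsto u + (1-e)$ trick and confirm that the resulting unit restricts cleanly to the corner. The ingredient that makes the restriction work is exactly the vanishing $d(1-e) = 0$, which in turn is what Lemma \ref{ann} was used to secure.
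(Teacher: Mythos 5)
Your proposal is correct and follows essentially the same route as the paper's own proof: both use Lemma \ref{ann} to force the idempotent $f$ into $eRe$, deduce $d \in eRe \cap \Delta(R)$, and then transfer $\Delta$-membership to the corner ring via the unit correspondence $u \mapsto u + (1-e)$. If anything, your version is slightly cleaner in explicitly noting that $ewe$ gives a two-sided inverse in $eRe$ (the paper only exhibits a right inverse at that step) and in matching the stated definition $1 - xu$ of $\Delta(R)$ directly.
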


\begin{proof}
Suppose $a = f + d$, where $f = f^2$, $d \in \Delta(R)$ and $fd = df$. Since $1 - e \in \text{ann}_l(a) \cap \text{ann}_r(a)$, with Lemma \ref{ann} at hand we find that $1 - e \in \text{ann}_l(f) \cap \text{ann}_r(f)$, which ensures $(1 - e)f = f(1 - e) = 0$. Thus, $f = ef = fe$. Also, since $a \in eRe$, we have $a = ea = ae = eae$. Now, multiplying $a = f + d$ by $e$ from the left and right sides, we deduce $a = efe + ede$. Note that, since $f = ef = fe$ and $f$ is an idempotent, $efe$ is also an idempotent. That is why, it suffices to show that $ede \in \Delta(eRe)$.

On the other hand, since $f = ef = fe = efe$ and $a = ea = ae = eae$, it is obvious that $$d = ed = de = ede \in \Delta(R) \cap eRe.$$ Now, we prove that $eRe \cap \Delta(R) \subseteq \Delta(eRe)$ always holds. To sustain this, assume $r \in eRe \cap \Delta(R)$ and $u \in U(eRe)$. Then, $$(u + (1 - e))(u^{-1} + (1 - e)) = 1,$$ so $u + (1 - e) \in U(R)$. Since $r \in \Delta(R)$, there is $v \in R$ such that $(1 - (u + (1 - e))r)v = 1$. And since $r \in eRe$, we receive $(1 - ur)v = 1$. Furthermore, multiplying by $e$ from the left and right sides, we derive $(e - ur)eve = e$, which insures $r \in \Delta(eRe)$. Thus, $d \in \Delta(eRe)$ whence $ede \in \Delta(eRe)$, as claimed.
\end{proof}

As an immediate consequence, we yield:

\begin{corollary}\label{corner ring}
Let $R$ be a ring, and let $e \in R$ be an idempotent. If $R$ is a strongly $\Delta$-clean ring, then so is $eRe$.
\end{corollary}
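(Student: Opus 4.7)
The plan is simply to read off this corollary from the preceding lemma by applying it to every element of $eRe$. Given $a \in eRe$, the hypothesis that $R$ is strongly $\Delta$-clean guarantees that $a$, viewed as an element of the larger ring $R$, admits a strongly $\Delta$-clean representation $a = f + d$ with $f^2 = f \in R$, $d \in \Delta(R)$ and $fd = df$. The preceding lemma then promotes this to a strongly $\Delta$-clean representation of $a$ inside the corner ring: it produces the idempotent $efe$ (which lies in $eRe$ and is still idempotent) and the element $ede \in \Delta(eRe)$ (the membership $\Delta(R) \cap eRe \subseteq \Delta(eRe)$ being established there), with the commutation relation preserved. Since $a \in eRe$ was arbitrary, this yields that $eRe$ is strongly $\Delta$-clean.

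There is essentially no obstacle, because all of the nontrivial work has already been carried out in the preceding lemma: the annihilator argument from Lemma \ref{ann} that forces $(1-e)f = f(1-e) = 0$ (so that $f = efe$), the verification that $efe$ is an idempotent in $eRe$, and the containment $\Delta(R) \cap eRe \subseteq \Delta(eRe)$ that handles the $\Delta$-part. The only step left for the corollary itself is the quantifier manipulation ``every $a \in R$ has such a representation, hence in particular every $a \in eRe$,'' after which the previous lemma does the rest.
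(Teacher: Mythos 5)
Your proposal is correct and is exactly the argument the paper intends: the corollary is stated there as an immediate consequence of the preceding lemma, obtained by applying that lemma to each element of $eRe$ (which is strongly $\Delta$-clean in $R$ by hypothesis). Nothing further is needed.
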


The next assertion is {\it not} too surprising.

\begin{proposition}\label{2.2}
For any ring $R$ and every $n \geq 2$, the matrix ring $M_n(R)$ is not strongly $\Delta$-clean.
\end{proposition}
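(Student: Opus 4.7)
The plan is to exhibit a single element of $M_n(R)$ that cannot admit a strongly $\Delta$-clean representation, namely the off-diagonal matrix unit $E_{12}$ (with $1$ in position $(1,2)$ and zeros elsewhere). This element is available for every $n \ge 2$, satisfies $E_{12}^2 = 0$, and is therefore nilpotent in $M_n(R)$.

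I may assume $R \ne 0$, so that $J(R) \ne R$ (the case $R = 0$ is vacuous). Arguing by contradiction, suppose $M_n(R)$ is strongly $\Delta$-clean. Then Corollary \ref{subset nil}, applied to $M_n(R)$, forces $E_{12} \in \Delta(M_n(R))$; alternatively, since $E_{12}^2 = 0 \in \Delta(M_n(R))$, one may invoke Lemma \ref{lemma 2} to obtain the same conclusion. Now, $\Delta(S) \subseteq J(S)$ for every ring $S$, since $\Delta(S)$ is, as recalled in the Introduction, the largest subring of $J(S)$ closed under unit multiplication. Combined with the classical identity $J(M_n(R)) = M_n(J(R))$, this yields $E_{12} \in M_n(J(R))$, so that the $(1,2)$-entry $1$ of $E_{12}$ would have to belong to $J(R)$, contradicting $R \ne 0$.

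The expected main obstacle is essentially absent: the proof reduces to a short chain of inclusions drawing only on Corollary \ref{subset nil} (or equivalently Lemma \ref{lemma 2}), the standard containment $\Delta(S) \subseteq J(S)$, and the formula $J(M_n(R)) = M_n(J(R))$. The only piece of hygiene is to record the exclusion of the trivial ring, and the only mild decision point is whether to phrase the nilpotence step via Corollary \ref{subset nil} or via Lemma \ref{lemma 2}; either variant fits the proposition in one or two lines.
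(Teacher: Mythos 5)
Your strategy is genuinely different from the paper's (which deduces the result from Lemma \ref{du ring} together with the cited fact \cite[Theorem 2.5]{kkqt} that $M_n(R)$ is never a $\Delta U$ ring for $n\geq 2$), and the first half of your argument --- $E_{12}$ is nilpotent, hence lies in $\Delta(M_n(R))$ by Corollary \ref{subset nil} or Lemma \ref{lemma 2} --- is fine. The gap is the step ``$\Delta(S)\subseteq J(S)$ for every ring $S$'': this is false in general, and the containment actually goes the other way. Indeed, $x\in J(S)$ means $1-xy\in U(S)$ for \emph{all} $y\in S$, whereas $x\in\Delta(S)$ only requires this for $y\in U(S)$, so $J(S)\subseteq\Delta(S)$, possibly properly; if $\Delta(S)\subseteq J(S)$ held universally, then $\Delta$ would always equal $J$, strongly $\Delta$-clean would collapse to strongly $J$-clean, and Problem 1 would be vacuous. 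The sentence of the Introduction you lean on is itself a misstatement of the Leroy--Matczuk result (which says $\Delta(R)$ is the largest Jacobson-radical \emph{subring of $R$} closed under multiplication by units, not a subring of $J(R)$); note that the paper elsewhere consistently uses the correct inclusion $J(R)\subseteq\Delta(R)$, e.g.\ in the proofs of Lemmas \ref{1.1} and \ref{lemma 4}.

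The argument is easily repaired, in either of two ways. Most directly: once $E_{12}\in\Delta(M_n(R))$, multiply on the right by the permutation unit $P$ interchanging the first two standard basis vectors; by \cite[Lemma 1(2)]{lmr} this gives $E_{11}=E_{12}P\in\Delta(M_n(R))\cap Id(M_n(R))=\{0\}$, forcing $E_{11}=0$ and hence $R=0$. Alternatively, your route through $J$ can be salvaged for this particular ring: for $n\geq 2$ every matrix over any ring is a finite sum of units, and since $\Delta$ is a subring closed under multiplication by units this makes $\Delta(M_n(R))$ a quasi-regular ideal, whence $\Delta(M_n(R))=J(M_n(R))=M_n(J(R))$ and your contradiction $1\in J(R)$ goes through. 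With either patch your proof is correct and has the merit of being self-contained, in contrast to the paper's appeal to an external theorem on $\Delta U$ rings.
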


\begin{proof}
With the aid of Lemma \ref{du ring}, if $M_n(R)$ were strongly $\Delta$-clean, then it would also be $\Delta U$. However, an appeal to \cite[Theorem 2.5]{kkqt} guarantees that, for each $n \geq 2$, the matrix ring $M_n(R)$ is not $\Delta U$. This means that, for each $n \geq 2$, the ring $M_n(R)$ is not strongly $\Delta$-clean, as asserted.
\end{proof}

The following technical claim is useful. Recall that $R$ is an {\it uniquely clean ring}, provided that each element in $R$ has an unique representation as the sum of an idempotent and an unit (see \cite{nzu}).

\begin{lemma}
Let $R$ be a ring. Then, the following four conditions hold:
	
(1) A division ring $R$ is strongly $\Delta$-clean if, and only if, $R \cong \mathbb{F}_2$.
	
(2) A ring $R$ is local and strongly $\Delta$-clean if, and only if, $R/J(R) \cong \mathbb{F}_2$.
	
(3) A semi-simple ring $R$ is strongly $\Delta$-clean if, and only if, $R \cong \mathbb{F}_2 \times \cdots \times \mathbb{F}_2$.
	
(4) If $R$ is semi-local and strongly $\Delta$-clean, then $R/J(R) \cong \mathbb{F}_2 \times \cdots \times \mathbb{F}_2$.
\end{lemma}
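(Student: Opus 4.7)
The plan is to address the four claims in sequence, using (1) as the base case, leveraging Lemma \ref{lemma 0} to reduce (2) to (1), and then combining Wedderburn--Artin with Proposition \ref{2.2} and Lemma \ref{lemma 0} to handle (3) and (4). The key observation driving everything is that in a division ring $D$, $\Delta(D) = \{0\}$: for any nonzero $x \in D$, taking $u = x^{-1}$ in the defining condition yields $1 - xu = 0 \notin U(D)$, so $x \notin \Delta(D)$.

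With $\Delta(R) = 0$ for a division ring $R$, a strongly $\Delta$-clean decomposition $r = e + d$ must have $d = 0$, so $r = e$ is idempotent; since the only idempotents of $R$ are $0$ and $1$, this forces $R \cong \mathbb{F}_2$, and the converse is trivial. For (2), in any local ring $R/J(R)$ is a division ring, and Lemma \ref{lemma 0}(2) passes strong $\Delta$-cleanness to this quotient, so part (1) yields $R/J(R) \cong \mathbb{F}_2$. Conversely, if $R$ is local with $R/J(R) \cong \mathbb{F}_2$, then every element of $R$ either lies in $J(R)$ or differs from $1$ by an element of $J(R)$; using the containment $J(R) \subseteq \Delta(R)$, the decompositions $r = 0 + r$ and $r = 1 + (r-1)$ exhibit $r$ as strongly $\Delta$-clean, since the idempotents $0$ and $1$ commute with everything.

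For (3), Wedderburn--Artin writes a semisimple $R$ as $\prod_{i=1}^{k} M_{n_i}(D_i)$; Lemma \ref{lemma 0}(1) transfers the strongly $\Delta$-clean property to each matrix factor, Proposition \ref{2.2} then forces every $n_i = 1$, and part (1) identifies each division ring $D_i$ with $\mathbb{F}_2$. The converse direction is Lemma \ref{lemma 0}(1) applied to $\mathbb{F}_2 \times \cdots \times \mathbb{F}_2$. Finally, (4) is immediate: $R$ being semi-local means $R/J(R)$ is semisimple, Lemma \ref{lemma 0}(2) makes this quotient strongly $\Delta$-clean, and (3) gives the stated form. The only step demanding genuine care is the verification that $\Delta(D) = 0$ for a division ring; once that is secured, the remaining steps are routine applications of the transfer lemmas and Wedderburn--Artin, so no real obstacle remains.
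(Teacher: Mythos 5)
Your proposal is correct and follows the same overall route as the paper: establish $\Delta(D)=0$ for a division ring to get (1), pass to $R/J(R)$ via Lemma \ref{lemma 0}(2) for the forward direction of (2), and combine Wedderburn--Artin with Lemma \ref{lemma 0}(1), Proposition \ref{2.2} and part (1) for (3), with (4) following from (3). The one place you genuinely diverge is the converse of (2): the paper invokes Nicholson--Zhou's result that a local ring with $R/J(R)\cong\mathbb{F}_2$ is uniquely clean and then appeals (forward) to the equivalence of uniquely clean with abelian strongly $\Delta$-clean, whereas you argue directly that every element of such a local ring lies in $J(R)$ or in $1+J(R)$ and exhibit the decompositions $r=0+r$ and $r=1+(r-1)$ with $J(R)\subseteq\Delta(R)$ and central idempotents $0,1$. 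Your version is more elementary and self-contained, avoiding both the external citation and the forward reference; the paper's version buys a connection to the uniquely clean literature at the cost of that dependency. Both are valid, and your explicit verification that $\Delta(D)=0$ (via $u=x^{-1}$) fills in a step the paper merely asserts.
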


\begin{proof}
(1) If $R$ is a division ring, then we know that $\Delta(R)=0$ and also that $Id(R)=\{0,1\}$. Therefore, $U(R)={1}$. The converse is obvious.
	
(2) If $R$ is local and strongly $\Delta$-clean ring, then (1) works. Reciprocally, suppose that $R/J(R) \cong \mathbb{F}_2$. Then, by \cite[Theorem 15]{nzu}, $R$ is an uniquely clean ring and, therefore, strongly $\Delta$-clean (compare with Theorem~\ref{cor 2} listed below).
	
(3) If $R$ is a semi-simple ring, in accordance with the well-known Wedderburnâ-Artin theorem we write $R\cong \prod M_{n_i}(D_i)$, where all $D_i$ are division rings. Consequently, based on Lemma \ref{lemma 0}(1), Corollary \ref{2.2}, and (1), we conclude that $R \cong \mathbb{F}_2 \times \cdots \times \mathbb{F}_2$, as expected. The opposite implication is obvious.
	
(4) It is clear by (3).
\end{proof}

A set $\{e_{ij} : 1 \le i, j \le n\}$ of nonzero elements of $R$ is said to be a system of $n^2$ matrix units if $e_{ij}e_{st} = \delta_{js}e_{it}$, where $\delta_{jj} = 1$ and $\delta_{js} = 0$ for $j \neq s$. In this case, $e := \sum_{i=1}^{n} e_{ii}$ is an idempotent of $R$ and $eRe \cong M_n(S)$, where $$S = \{r \in eRe : re_{ij} = e_{ij}r,~~\textrm{for all}~~ i, j = 1, 2, . . . , n\}.$$
Recall that a ring $R$ is said to be {\it Dedekind-finite} if $ab=1$ is equivalent to $ba=1$ for any $a,b\in R$. In other words, all one-sided inverses in the ring are two-sided.

\medskip

The next affirmation is {\it not} too curious.

\begin{lemma} \label{dedkind finite}
Every strongly $\Delta$-clean ring is Dedekind-finite.
\end{lemma}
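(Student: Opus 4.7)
The plan is to show $ba = 1$ by proving that the idempotent $e := 1 - ba$ equals zero. First I would verify, directly from $ab = 1$, the three routine identities $e^2 = e$, $ae = a - aba = 0$, and $eb = b - bab = 0$.

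The key step is then to invoke Corollary \ref{com idim} twice: applied to the idempotent $e$ with $r = a$ and with $r = b$, it produces $ea - ae \in \Delta(R)$ and $eb - be \in \Delta(R)$. Combined with $ae = eb = 0$, these yield $ea \in \Delta(R)$ and $be \in \Delta(R)$. Since $\Delta(R)$ is a subring of $R$ (a fact already used in the proofs of Lemmas \ref{lemma 1} and \ref{lemma 2}), the product $(ea)(be)$ also lies in $\Delta(R)$.

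The crux is then the telescoping identity $(ea)(be) = e(ab)e = e^{2} = e$, which collapses the product back to $e$ via the hypothesis $ab = 1$. Hence $e \in \Delta(R) \cap Id(R)$, and this intersection is trivial: any idempotent $f \in \Delta(R)$ satisfies $1 - f \in U(R)$ by the very definition of $\Delta(R)$, but $1 - f$ is also idempotent, so $1 - f \in U(R) \cap Id(R) = \{1\}$, forcing $f = 0$ (this is precisely the fact used inside the proof of Lemma \ref{lemma 2}). Therefore $e = 0$ and $ba = 1$, as required.

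The only piece of the argument that is not purely formal is spotting the identity $(ea)(be) = e$, which is what packages the two conclusions of Corollary \ref{com idim} into a single element of $\Delta(R)$ that is simultaneously the idempotent we want to kill; once this observation is in place, the remainder is a mechanical application of the structural properties of $\Delta(R)$ already established earlier in the paper.
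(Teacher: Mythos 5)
Your proof is correct, and it takes a genuinely different route from the paper's. The paper argues by contradiction through the matrix-units construction: if $ab=1$ but $ba\neq 1$, the elements $e_{ij}=a^i(1-ba)b^j$ yield a corner $eRe\cong M_n(S)$ with $S\neq 0$; Corollary \ref{corner ring} forces $M_n(S)$ to be strongly $\Delta$-clean, contradicting Proposition \ref{2.2} (which itself rests on the $\Delta U$ property and \cite[Theorem 2.5]{kkqt}). Your argument instead kills the single idempotent $e=1-ba$ directly: from $ae=eb=0$ and Corollary \ref{com idim} you get $ea, be\in\Delta(R)$, and the telescoping identity $(ea)(be)=e(ab)e=e$ together with the fact that $\Delta(R)$ is a multiplicatively closed subring containing no nonzero idempotents gives $e=0$. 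This is more elementary and self-contained --- it bypasses the corner-ring lemma and the external matrix-ring obstruction entirely (indeed, you could even skip Corollary \ref{com idim} and cite Corollary \ref{subset nil} directly, since $ea$ and $be$ are square-zero, hence nilpotent). What the paper's route buys in exchange is reusability: the matrix-units argument is a standard template that derives Dedekind-finiteness from the two structural facts ``corners inherit the property'' and ``$M_n$ never has it,'' both of which the authors need elsewhere anyway.
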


\begin{proof}
If we assume the contrary that $R$ is not a Dedekind-finite ring, then there are elements $a, b \in R$ such that $ab = 1$ but $ba \neq 1$. Assuming $e_{ij} = a^i(1-ba)b^j$ and $e =\sum_{i=1}^{n}e_{ii}$, there exists a nonzero ring $S$ such that $eRe \cong M_n(S)$. However, Corollary \ref{corner ring} is a guarantor that $eRe$ is a strongly $\Delta$-clean ring, so that $M_n(S)$ must also be a strongly $\Delta$-clean, which contradicts Corollary \ref{2.2}.
\end{proof}

\section{Main Results}

We begin here with the following first chief criterion.

\begin{theorem}\label{1}
A ring $R$ is strongly $\Delta$-clean if, and only if, $R$ is strongly clean and $\Delta(R) = \{ x \in R \mid 1 - x \in U(R) \}$.
\end{theorem}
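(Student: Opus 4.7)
I would split the proof into the two implications. For ($\Rightarrow$), strong cleanness is immediate from Lemma \ref{1.5}. The inclusion $\Delta(R) \subseteq \{x : 1 - x \in U(R)\}$ is automatic, by specialising $u = 1$ in the defining condition of $\Delta(R)$. The substantive step is the reverse inclusion: given $x$ with $1 - x \in U(R)$, I would take a strongly $\Delta$-clean representation $x = e + d$. Then $1 - e = (1 - x) + d \in U(R) + \Delta(R) = U(R)$ by Lemma \ref{1.1}, so the idempotent $1 - e$ is a unit, hence equals $1$, which forces $e = 0$ and $x = d \in \Delta(R)$.

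For ($\Leftarrow$), the first thing to notice is that the hypothesis $\Delta(R) = \{x : 1 - x \in U(R)\}$ is simply a reformulation of the $\Delta U$ condition $U(R) = 1 + \Delta(R)$, the reverse inclusion $1 + \Delta(R) \subseteq U(R)$ being automatic from Lemma \ref{1.1}. Given $a \in R$, strong cleanness supplies $a = e + u$ with $e^2 = e$, $u \in U(R)$ and $eu = ue$, and the rearrangement I would try is
\[
a = (1 - e) + (2e + u - 1),
\]
where $1 - e$ is an idempotent and the commutativity of the two summands reduces via direct expansion to $eu = ue$. Everything then hinges on showing $2e + u - 1 \in \Delta(R)$, which by the hypothesis is equivalent to $2 - 2e - u \in U(R)$.

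The crucial small observation is the identity $(1 - 2e)^2 = 1$, which forces $1 - 2e \in U(R)$ and hence, via the hypothesis, $2e \in \Delta(R)$; replacing $e$ by $1 - e$ yields $2(1 - e) \in \Delta(R)$ as well. Therefore
\[
2 - 2e - u = 2(1-e) - u \in \Delta(R) + U(R) = U(R),
\]
again by Lemma \ref{1.1}, closing the argument. The main obstacle, in my view, is isolating the correct rearrangement: the naive attempt to keep $e$ as the idempotent and push the adjustment into the $\Delta(R)$-summand does not obviously land in $\Delta(R)$, and the trick is to flip the idempotent to $1 - e$ and absorb the compensating $2e$ into the other summand. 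Once that is done, everything reduces to the routine translation between ``$x \in \Delta(R)$'' and ``$1 - x \in U(R)$'' together with the absorption law $U(R) + \Delta(R) = U(R)$.
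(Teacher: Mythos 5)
Your argument is correct and follows essentially the same route as the paper: the forward direction is verbatim the paper's (absorb $d$ into $1-x$ via Lemma \ref{1.1} and kill the idempotent), and in the converse the paper likewise flips the idempotent to $1-e$, merely applying strong cleanness to $-a$ instead of $a$ (its displayed decomposition even has a sign slip that your direct version $a=(1-e)+(2e+u-1)$ avoids). Your verification that the remainder lies in $\Delta(R)$, via $(1-2e)^2=1$, hence $2(1-e)\in\Delta(R)$, and the absorption $\Delta(R)+U(R)=U(R)$, is the same mechanism the paper uses implicitly through $2\in\Delta(R)$.
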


\begin{proof}
Suppose that $R$ is strongly $\Delta$-clean. Then, $R$ is strongly clean by Lemma \ref {1.5}. Obviously, $\Delta(R)\subseteq \{ x \in R \mid 1 - x \in U(R) \}$. Now, if $1-x\in U(R)$, we can choose an idempotent $e\in R$ and an element $d\in\Delta(R)$ such that $x=e+d$ with $ed=de$. Thus, $1-e=(1-x)+d\in U(R)$, and so $e=0$. Hence, $x\in \Delta(R)$.

Conversely, assume that $R$ is strongly clean and $\Delta(R) = \{ x \in R \mid 1 - x \in U(R) \}$. For any $a\in R$, we write $-a=e+u$, where $e\in Id(R)$ and $u\in U(R)$ with $eu=ue$. This leads to $a=(1-e)+(1+u)$. Therefore, $a\in R$ is strongly $\Delta$-clean, as desired.
\end{proof}

Note that, in view of \cite[Theorem 2.4]{cyz}, the ring $M_{2}(\hat{\mathbb{Z}}_{p})$ is strongly clean that is {\it not} strongly $\Delta$-clean in virtue of Corollary \ref {2.2}.

\medskip

We are now ready to proceed by proving our second major statement.

\begin{theorem}\label{cor 2}
Let $R$ be a ring. Then, the following are equivalent:

(1) $R$ is an uniquely clean ring.

(2) $R$ is a strongly $\Delta$-clean with all idempotents central.
\end{theorem}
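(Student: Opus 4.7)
The plan is to prove the two directions separately, leveraging the characterization in Theorem~\ref{1} for the forward direction, and reducing the reverse direction to the classical Nicholson–Zhou criterion for uniquely clean rings from \cite{nzu}.

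For $(1) \Rightarrow (2)$, I would invoke the known structural facts that in an uniquely clean ring every idempotent is central, $R$ is strongly clean (automatic once idempotents are central), and $R/J(R)$ is Boolean. The last of these forces $U(R) = 1 + J(R)$, because a Boolean ring has only $1$ as its unit. Consequently $\{x \in R : 1 - x \in U(R)\} = J(R)$, and since the chain $J(R) \subseteq \Delta(R) \subseteq \{x : 1 - x \in U(R)\}$ always holds, equality $\Delta(R) = J(R) = \{x : 1 - x \in U(R)\}$ follows. An application of Theorem~\ref{1} then delivers strongly $\Delta$-cleanness, and centrality of idempotents is already in hand.

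For $(2) \Rightarrow (1)$, the strategy is to verify the Nicholson–Zhou criterion: $R$ is uniquely clean iff $R$ is clean, idempotents are central, $R/J(R)$ is Boolean, and idempotents lift modulo $J(R)$. Cleanness follows from Lemma~\ref{1.5}, centrality is the standing hypothesis, and lifting of idempotents is automatic because clean rings are exchange. The substantive point is that $R/J(R)$ is Boolean, which by Lemma~\ref{lemma 3} reduces to proving $\Delta(R) \subseteq J(R)$ (the reverse inclusion being standard). To this end, fix $d \in \Delta(R)$ and $r \in R$, and write a strongly $\Delta$-clean representation $r = e + d'$. Since $\Delta(R)$ is a subring, $d'd \in \Delta(R)$, so $rd = ed + d'd$ lies in $\Delta(R)$ once one establishes the key sub-claim: \emph{for any central idempotent $e$ and any $d \in \Delta(R)$, $ed \in \Delta(R)$}. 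Granting this, one obtains $1 - rd \in 1 + \Delta(R) = U(R)$ by the $\Delta U$-property of Lemma~\ref{du ring}, and since $r$ was arbitrary, $d \in J(R)$.

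The hard part is the sub-claim, and I would prove it via the Peirce decomposition $R \cong eR \times (1-e)R$ coming from centrality of $e$. For any $u \in U(R)$, the identity
\[
1 - (ed)u \;=\; (1-e) \;+\; e(1 - du)
\]
expresses the left side as a sum of its $(1-e)R$-component (equal to the identity $1-e$, hence a unit there) and its $eR$-component $e(1-du)$, which is a unit in $eR$ with inverse $e(1-du)^{-1}$ because $1 - du \in U(R)$ and $e$ is central. Hence $1 - (ed)u \in U(R)$, giving $ed \in \Delta(R)$. Once this sub-claim is in place, everything else is an orderly assembly of earlier results; the centrality hypothesis is used precisely here, where it turns $\Delta(R)$ into a left (and symmetrically right) ideal and collapses the gap between $\Delta(R)$ and $J(R)$.
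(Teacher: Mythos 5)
Your proposal is correct, but it follows a genuinely different route from the paper's, especially in the direction $(2)\Rightarrow(1)$. The paper argues directly from the definition of unique cleanness: it produces a clean decomposition from the strongly $\Delta$-clean one of $a+1$, and for uniqueness it uses the $\Delta U$ property (via \cite[Proposition 2.3]{kkqt}) to place the difference $e-f$ of two clean idempotents in $\Delta(R)$, then kills it with the identity $e-f=(e-f)^3$ and $\Delta(R)\cap Id(R)=\{0\}$. You instead prove the stronger structural fact that centrality of idempotents collapses $\Delta(R)$ down to $J(R)$ --- your sub-claim that $ed\in\Delta(R)$ for $e$ a central idempotent and $d\in\Delta(R)$, proved correctly via the Peirce decomposition $1-edu=(1-e)+e(1-du)$, is the key step, and it sharpens the paper's Lemma~\ref{lemma 1} (which only gives $2ed\in\Delta(R)$ without centrality) --- and then you feed $\Delta(R)=J(R)$ into Lemma~\ref{lemma 3} to get $R/J(R)$ Boolean and invoke the Nicholson--Zhou characterization. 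Your $(1)\Rightarrow(2)$ likewise detours through Theorem~\ref{1} rather than writing down the decomposition directly from \cite[Theorem 20]{nzu}, but both rest on the same facts from \cite{nzu}. The paper's argument is shorter and more self-contained; yours is longer but buys the explicit identity $\Delta(R)=J(R)$ for abelian strongly $\Delta$-clean rings, which shows that in the abelian case strongly $\Delta$-clean and strongly $J$-clean coincide --- a fact directly relevant to the paper's Problem~1.
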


\begin{proof}
$(1)\Rightarrow(2)$. Assume \(R\) is an uniquely clean ring. Looking at \cite[Lemma 4]{nzu}, every idempotent in \(R\) is central. Also, with the help of \cite[Theorem 20]{nzu}, for every \(a \in R\), there is an unique idempotent \(e\) such that \(a - e \in J(R) \subseteq \Delta(R)\). Thus, there is \(d \in \Delta(R)\) such that \(a = e + d\). Since all idempotents are central, we have \(ed = de\), as required.

$(2)\Rightarrow (1)$. Assume \(R\) is a strongly $\Delta$-clean ring, and let \(a \in R\) be arbitrary. Suppose \(a+1 = e + d\) is a strongly $\Delta$-clean representation. Then, \(a = e + (d-1)\), which is obviously a clean representation. Suppose now \(e + u = f + v\) are two clean representations. Then, by \cite [Proposition 2.3]{kkqt}, we infer \(e - f = v - u \in \Delta(R)\). Since all idempotents are central, we may write \(e - f = (e-f)^3\), and so \((e-f)^2 \in \Delta(R) \cap Id(R) = \{0\}\). Therefore, \(e - f = (e-f)^3 = (e-f)(e-f)^2 = 0\). Hence, \(e = f\), as needed.
\end{proof}

We say that an element $a$ in a ring $R$ is {\it uniquely $\Delta$-clean} if $a = e + d$, where $e^2 = e\in R$, $d \in \Delta(R)$ and this representation is unique. Thus, a ring $R$ is called {\it uniquely $\Delta$-clean} if every its element is uniquely $\Delta$-clean.

\begin{lemma}\label{uniquely}
Every idempotent in an uniquely $\Delta$-clean ring is central.
\end{lemma}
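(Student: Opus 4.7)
The plan is to show, for any idempotent $e \in R$ and any $r \in R$, that $er = re$. Setting $x := er(1-e)$ and $y := (1-e)re$, the task reduces to proving $x = y = 0$, since then $er = ere + x = ere = ere + y = re$. A quick verification using $ex = x$, $xe = 0$, and $x^2 = 0$ shows that $e + x$ is an idempotent of $R$; symmetrically $e - y$ is idempotent.

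Next I would exploit the uniqueness of the $\Delta$-clean decomposition. The element $e$ has the obvious representation $e = e + 0$, so by hypothesis this is its only $\Delta$-clean decomposition. Should $x$ happen to lie in $\Delta(R)$, the alternative expression $e = (e+x) + (-x)$ would also be a valid $\Delta$-clean decomposition, with idempotent part $e+x$ and $\Delta$-part $-x$ (using that $\Delta(R)$ is a subring, so it is closed under negation); uniqueness would then force $e+x = e$, i.e., $x = 0$. A parallel argument with $e - y$ and $y$ yields $y = 0$, and consequently $er = re$.

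The whole argument therefore hinges on verifying that the square-zero element $x = er(1-e)$ lies in $\Delta(R)$. To this end I would attempt to establish the stronger structural fact that every element $n \in R$ with $n^2 = 0$ lies in $\Delta(R)$. Writing $n = f + d$ as its unique $\Delta$-clean decomposition, the identity $0 = n^2 = f + fd + df + d^2$ together with $1 + d \in U(R)$ rearranges to
\[
f(1+d) = -d n.
\]
One then combines this with the idempotency of $f$, the invariance of $\Delta(R)$ under unit multiplication, and the uniqueness hypothesis, applied to suitably chosen auxiliary elements, to conclude $f = 0$, so that $n = d \in \Delta(R)$.

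This last step is the principal obstacle: converting the identity $f(1+d) = -dn$ into $f = 0$ using only uniqueness of decomposition is delicate, and I expect it will require constructing an auxiliary element admitting two at-first-distinct $\Delta$-clean decompositions whose idempotent parts can only collapse when $f$ vanishes. Once square-zero elements are shown to lie in $\Delta(R)$, the reduction described above finishes the proof routinely.
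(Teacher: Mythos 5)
Your reduction is the right one and matches the paper's strategy: both proofs hinge on showing that the square-zero element $x=er(1-e)$ lies in $\Delta(R)$, after which uniqueness of the decomposition (applied by you to $e=(e+x)+(-x)=e+0$, by the paper to $e+x=(e+x)+0=e+x$) kills $x$. But you have not proved that claim, and you say so yourself: the passage from $f(1+d)=-dn$ to $f=0$ is left as "the principal obstacle." That is a genuine gap, not a routine verification. The difficulty is that $\Delta(R)$ is only a subring closed under multiplication by units, not an ideal, so $-dn=f(1+d)$ gives $f=-dn(1+d)^{-1}$ without any obvious way to place $f$ in $\Delta(R)\cap Id(R)$. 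The paper closes exactly this hole by quoting two external facts from Karabacak--Ko\c{s}an--Quynh--Tai: a (uniquely) $\Delta$-clean ring is $\Delta U$, and in a $\Delta U$ ring one has $Nil(R)\subseteq\Delta(R)$.

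The good news is that the missing step admits a short direct argument which you should substitute for your attack on $n=f+d$. Do not decompose the nilpotent; decompose a unit. If $u\in U(R)$ and $u=f+d$ with $f^2=f$ and $d\in\Delta(R)$, then $f=u-d\in U(R)+\Delta(R)=U(R)$ by Lemma \ref{1.1}, and the only idempotent unit is $1$, so $u\in 1+\Delta(R)$; hence $U(R)=1+\Delta(R)$, i.e.\ $R$ is $\Delta U$. Now for your $x$ with $x^2=0$, the element $1+x$ is a unit, so $1+x\in 1+\Delta(R)$ and therefore $x\in\Delta(R)$. With that inserted, the rest of your argument (idempotency of $e+x$, the two decompositions, and the symmetric treatment of $y=(1-e)re$) goes through and reproduces the paper's proof.
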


\begin{proof}
Let $R$ be an uniquely $\Delta$-clean ring. Suppose that $e^2=e\in R$. For any $r\in R$, we know that $e+(er-ere)$ is an idempotent and $(er-ere)$ is a nilpotent. On the other hand, $R$ is $\Delta U$ having in mind \cite[Theorem 4.2]{kkqt} and hence $Nil(R)\subseteq \Delta(R)$ taking into account \cite [Proposition 2.4(3)]{kkqt}. Therefore, $(er-ere)\in \Delta(R)$. Hence, $e+(er-ere)+0=e+(er-ere)$ are two $\Delta$-clean representations. It, thus, follows that $er=ere$ and, in a way of similarity, $re=ere$, as asked for.
\end{proof}

In addition to Theorem~\ref{cor 2}, we record the following necessary and sufficient condition.

\begin{theorem}\label{cor 3}
Let $R$ be a ring. Then, the following two items are equivalent:
	
(1) $R$ is an uniquely $\Delta$-clean ring;
	
(2) $R$ is a strongly $\Delta$-clean with all idempotents central.
\end{theorem}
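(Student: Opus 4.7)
The plan is to establish the equivalence via two quick implications, leveraging Lemma \ref{uniquely} for one direction and mimicking the final step of the proof of Theorem \ref{cor 2} for the other.

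For the implication $(1) \Rightarrow (2)$, I would proceed as follows. Assume $R$ is uniquely $\Delta$-clean. By Lemma \ref{uniquely}, every idempotent of $R$ is central. In particular, given any $a \in R$ with (unique) $\Delta$-clean representation $a = e + d$, the centrality of $e$ forces $ed = de$ automatically, so the representation is actually a strongly $\Delta$-clean representation. Existence therefore upgrades for free, and $R$ is strongly $\Delta$-clean with all idempotents central.

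For the implication $(2) \Rightarrow (1)$, assume $R$ is strongly $\Delta$-clean with all idempotents central. Existence of a $\Delta$-clean representation is immediate (and automatically strong by centrality), so only uniqueness must be verified. Suppose $a = e + d = f + g$ are two $\Delta$-clean representations, with $e,f \in Id(R)$ and $d,g \in \Delta(R)$. Then $e - f = g - d \in \Delta(R)$ since $\Delta(R)$ is closed under subtraction. The key computation is that, because $e$ and $f$ are commuting central idempotents, one checks directly that $(e-f)^2 = e + f - 2ef$ is itself an idempotent, and moreover $(e-f)^3 = e - f$. Now $(e-f)^2 \in \Delta(R)$ (as $\Delta(R)$ is a subring containing $e-f$), so
\[
(e-f)^2 \in \Delta(R) \cap Id(R) = \{0\},
\]
using the standard fact that any idempotent lying in $\Delta(R)$ must be zero (since $1 - e \in U(R)$ forces $e = 0$ when $e$ is idempotent). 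Consequently, $e - f = (e-f)^3 = (e-f)\cdot (e-f)^2 = 0$, whence $e = f$ and then $d = g$, establishing uniqueness.

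There is no substantive obstacle here since the ingredients are already in place: Lemma \ref{uniquely} handles the centrality upgrade in one direction, and the cubing identity $(e-f)^3 = e-f$ for commuting central idempotents, combined with $\Delta(R) \cap Id(R) = \{0\}$, handles the uniqueness in the other direction. The only mildly subtle point worth flagging is that the definition of \emph{uniquely} $\Delta$-clean does not a priori require commutativity of $e$ and $d$; this is precisely why Lemma \ref{uniquely} must be invoked first in $(1) \Rightarrow (2)$ in order to bridge between the uniquely $\Delta$-clean and the strongly $\Delta$-clean decomposition notions.
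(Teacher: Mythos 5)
Your proof is correct and follows essentially the same route as the paper: Lemma \ref{uniquely} for $(1)\Rightarrow(2)$, and for $(2)\Rightarrow(1)$ the observation that $e-f\in\Delta(R)$ together with $\Delta(R)\cap Id(R)=\{0\}$. In fact you spell out the cubing identity $(e-f)^3=e-f$ that the paper's own proof of this theorem glosses over (it appears explicitly only in the proof of Theorem \ref{cor 2}), so your write-up is, if anything, more complete.
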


\begin{proof}
$(1)\Rightarrow(2)$. Under presence of (1), we conclude that $R$ is abelian $\Delta$-clean, and hence it is strongly $\Delta$-clean.
	
$(2)\Rightarrow (1)$. Suppose $e+d=f+d'$ are two $\Delta$-clean representations. It suffices to establish that $e=f$. Since $\Delta(R)$ is a subring of $R$ and does not contain nonzero idempotents, we discover $e-f\in \Delta(R)$. Hence, $e-f=0$. So, $e=f$, as required.
\end{proof}

As a direct consequence, we yield:

\begin{corollary}
A ring $R$ is uniquely $\Delta$-clean if, and only if, $R$ is uniquely clean.
\end{corollary}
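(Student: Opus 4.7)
The plan is to deduce this corollary by simply chaining Theorems \ref{cor 2} and \ref{cor 3}, since each of them characterizes a different ``uniqueness'' property in terms of the same intermediate condition, namely ``strongly $\Delta$-clean with all idempotents central''.

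More explicitly, I would argue as follows. By Theorem \ref{cor 3}, the ring $R$ is uniquely $\Delta$-clean precisely when $R$ is strongly $\Delta$-clean and every idempotent of $R$ is central. By Theorem \ref{cor 2}, this latter condition is in turn equivalent to $R$ being uniquely clean. Composing the two biconditionals yields the desired equivalence, so no additional work is needed beyond citing these two results.

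There is essentially no obstacle here: both halves of the argument are already contained in the theorems established just above, so the proof is a one-line consequence. If anything, the only point worth mentioning is to be explicit about the logical chain so the reader sees that ``uniquely clean'' and ``uniquely $\Delta$-clean'' coincide as classes of rings, which is a somewhat striking conclusion given that $\Delta(R)$ can be strictly larger than the set of units' complements appearing in a clean decomposition.
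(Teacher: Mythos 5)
Your proposal is correct and is exactly the argument the paper intends: the corollary is stated immediately after Theorem \ref{cor 3} as a direct consequence of chaining it with Theorem \ref{cor 2}, both of which characterize their respective properties via the same condition ``strongly $\Delta$-clean with all idempotents central.'' No further work is needed.
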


\begin{lemma} \label{ni idempotent}
Let $R$ be a strongly $\Delta$-clean ring. Then, $R$ is local if, and only if, it has no non-trivial idempotents.
\end{lemma}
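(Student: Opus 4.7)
The forward direction is the easy, standard one: if $R$ is local then its only idempotents are $0$ and $1$, since for any idempotent $e$ we have $e(1-e)=0$, which together with the fact that the non-units of a local ring form an ideal forces $e\in\{0,1\}$ (otherwise both $e$ and $1-e$ would be non-units summing to the unit $1$).

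For the converse, I plan to verify the classical criterion that a ring $R$ is local iff for every $a\in R$ either $a\in U(R)$ or $1-a\in U(R)$. Assume $R$ is strongly $\Delta$-clean with $Id(R)=\{0,1\}$, and pick any $a\in R$. Write $a=e+d$ in a strongly $\Delta$-clean representation, with $e\in Id(R)$ and $d\in\Delta(R)$. By hypothesis $e\in\{0,1\}$, so the proof reduces to two cases.

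If $e=0$, then $a=d\in\Delta(R)$, and taking $u=1$ in the defining relation $1-du\in U(R)$ of $\Delta(R)$ gives $1-a=1-d\in U(R)$. If instead $e=1$, then $a-1=d\in\Delta(R)$; since $\Delta(R)$ is a subring (in particular, closed under negation), $-d\in\Delta(R)$, and the same defining property with $u=1$ applied to $-d$ yields $a=1-(-d)\in U(R)$. Either way $a$ or $1-a$ is a unit, so $R$ is local.

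There is no real obstacle in this argument; the only point to be careful about is invoking the right defining property of $\Delta(R)$ (namely $1-d\in U(R)$ for $d\in\Delta(R)$, which is just the $u=1$ instance) and the fact that $\Delta(R)$ is closed under negation. The essential input from the strong $\Delta$-cleanness hypothesis is simply the existence of the decomposition $a=e+d$; the hypothesis on idempotents then collapses the cases to the two listed above.
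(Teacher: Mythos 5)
Your proof is correct and follows essentially the same route as the paper: the paper also reduces to $e\in\{0,1\}$ to get $R=\Delta(R)\cup U(R)$ and then invokes the standard locality criterion, which is just a repackaging of your ``$a$ or $1-a$ is a unit'' test. The only cosmetic difference is that you make the appeal to the defining property of $\Delta(R)$ (with $u=1$) and its closure under negation explicit, where the paper leaves that implicit.
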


\begin{proof}
If $R$ is local, then clearly $R$ has no non-trivial idempotents. Now, suppose that $R$ has no non-trivial idempotents. Then, for every $a \in R$, we have either $a \in \Delta(R)$ or $a-1 \in \Delta(R)$, which in turn assures that $R = \Delta(R) \cup U(R)$. Moreover, it can readily be proved that $R$ is local if, and only if, $R = \Delta(R) \cup U(R)$, as pursued.
\end{proof}

Our next valuable criterion is the following one.

\begin{theorem}\label{boolean}
A ring $R$ is strongly $\Delta$-clean with $J(R)=(0)$ if, and only if, $R$ is boolean.
\end{theorem}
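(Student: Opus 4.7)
The plan is to reduce this theorem to the single claim that $\Delta(R) = 0$ when $R$ is strongly $\Delta$-clean and $J(R)=(0)$. Once that is in hand, Lemma~\ref{lemma 3} gives $a - a^2 \in \Delta(R) = 0$ for every $a \in R$, forcing $a = a^2$ and hence $R$ boolean. The converse is routine: in a boolean ring every element equals itself plus $0$, which is a trivially commuting $S\Delta C$ decomposition with $0 \in \Delta(R)$; and $J(R) = 0$ follows because any $x \in J(R)$ satisfies $x(1-x) = 0$ with $1-x$ a unit.

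For the forward direction I would first collect the structural consequences already established. By Lemma~\ref{lemma 4}, the ring $R = R/J(R)$ is reduced, so every idempotent of $R$ is central; by Lemma~\ref{lemma 5}, $2 \in J(R) = 0$; and by Lemma~\ref{du ring}, $R$ is a $\Delta U$ ring, so $1 + \Delta(R) = U(R)$. These facts set up the central-idempotent splitting used in the crucial step.

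The heart of the argument is showing $\Delta(R) \subseteq J(R)$. Given $d \in \Delta(R)$ and an arbitrary $r \in R$, I would write $r = e + d'$ in strongly $\Delta$-clean form, with $e$ a \emph{central} idempotent and $d' \in \Delta(R)$ commuting with $e$. Using that $\Delta(R)$ is a subring, both $d(1+d') = d + dd'$ and $dd'$ lie in $\Delta(R)$, so $1 - d(1+d')$ and $1 - dd'$ are units. In the ring decomposition $R \cong eR \times (1-e)R$ induced by the central idempotent $e$, the $e$-component of
\[
1 - dr \;=\; 1 - de - dd'
\]
simplifies to $e\bigl(1 - d(1+d')\bigr)$, while the $(1-e)$-component collapses to $(1-e)(1 - dd')$ because the cross term $(1-e)de$ vanishes from $(1-e)e = 0$. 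Each is a unit in its respective factor, so $1 - dr \in U(R)$. Since $r$ was arbitrary, $d \in J(R) = 0$, and so $\Delta(R) = 0$ as required.

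I expect the main obstacle to be precisely the inclusion $\Delta(R) \subseteq J(R)$, since $\Delta(R)$ is in general only closed under multiplication by units and not by arbitrary elements. The resolution is to absorb the ``non-unit'' information in $r$ into a central idempotent $e$ via the strongly $\Delta$-clean representation of $r$, splitting the verification of $1 - dr \in U(R)$ into two independent $\Delta$-computations, one in each corner of the product decomposition.
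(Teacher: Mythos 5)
Your proof is correct, but it follows a genuinely different route from the paper's. The paper also starts from Lemma~\ref{lemma 4} (with $J(R)=0$ the ring is reduced, hence abelian), but then immediately invokes Theorem~\ref{cor 2} to conclude that $R$ is uniquely clean and finishes by citing Nicholson--Zhou's characterization of uniquely clean rings ($R/J(R)$ Boolean with unique lifting of idempotents), which with $J(R)=0$ gives that $R$ itself is Boolean. You instead prove directly that $\Delta(R)\subseteq J(R)$ by splitting $1-dr$ along the central idempotent $e$ in an S$\Delta$C representation of $r$, checking that each Peirce component $e\bigl(1-d(1+d')\bigr)$ and $(1-e)(1-dd')$ is a unit in its corner (which only uses that $\Delta(R)$ is a subring closed under multiplication by units), and then conclude via Lemma~\ref{lemma 3} that $a-a^2\in\Delta(R)=0$. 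What each approach buys: the paper's argument is shorter given the machinery already built (Theorem~\ref{cor 2}) but leans on an external result from \cite{nzu}; yours is self-contained within the paper's own lemmas, avoids the uniquely-clean detour entirely, and as a by-product isolates the statement that an abelian strongly $\Delta$-clean ring satisfies $\Delta(R)=J(R)$, which is of independent interest given Theorem~\ref{nil} and Problem~1. Your treatment of the converse (the decomposition $a=a+0$ and the observation that $x(1-x)=0$ forces $J(R)=0$ in a Boolean ring) is fine; the paper leaves it implicit.
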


\begin{proof}
Using Lemma \ref{lemma 4}, we conclude that $R$ is reduced, and hence it is abelian. Now, by virtue of
Theorem~\ref{cor 2}, we detect that $R$ is uniquely clean. Thus, \cite[Theorem 19]{nzu} is applicable to infer that  $R$ is boolean, as stated.
\end{proof}

As an automatic consequence, we yield:

\begin{corollary}\label{booli}
If $R$ is a strongly $\Delta$-clean ring, then the quotient-ring $R/J(R)$ is boolean.
\end{corollary}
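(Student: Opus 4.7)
The plan is to chain two results that have already been established, so the argument should be essentially a one-liner once the ingredients are correctly identified. First I would invoke Lemma \ref{lemma 0}(2): since $J(R)$ is itself an ideal of $R$ contained in $J(R)$, that lemma immediately yields that the factor-ring $R/J(R)$ is strongly $\Delta$-clean.

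Next I would recall the standard fact that $J(R/J(R)) = (0)$ for every ring $R$; this is one of the basic properties of the Jacobson radical and requires no machinery beyond what is assumed in the paper. With this in hand, $R/J(R)$ satisfies the hypotheses of Theorem \ref{boolean}, namely, being strongly $\Delta$-clean together with having trivial Jacobson radical.

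Applying Theorem \ref{boolean} to $R/J(R)$ then gives that $R/J(R)$ is boolean, which is exactly the claim. No genuine obstacle is expected: the whole content lies in the fact that the class of strongly $\Delta$-clean rings is closed under quotients by ideals inside $J(R)$, and that the only such rings with zero Jacobson radical are boolean. If one wanted to make the proof fully self-contained within this corollary, the only step deserving an extra line would be verifying $J(R/J(R)) = 0$, but this is entirely standard and is used freely throughout the paper.
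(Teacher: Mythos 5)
Your proposal is correct and matches the paper's intended argument exactly: the corollary is stated as an automatic consequence of Theorem \ref{boolean}, using Lemma \ref{lemma 0}(2) (which even explicitly notes that $R/J(R)$ is strongly $\Delta$-clean) together with the standard fact that $J(R/J(R))=(0)$. Nothing further is needed.
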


A ring $R$ is said to be {\it right (resp., left) quasi-duo} if every maximal right (resp., left) ideal of $R$ is an ideal.

\medskip

We now derive the following.

\begin{corollary}
Every strongly $\Delta$-clean ring is right (left) quasi-duo.
\end{corollary}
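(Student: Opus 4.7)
The plan is to reduce the claim to the commutativity of $R/J(R)$, which we already have in hand from Corollary \ref{booli}. Recall that $R$ is right quasi-duo precisely when every maximal right ideal of $R$ is two-sided, and there is a well-known (and very short) principle: if $R/J(R)$ is commutative, then $R$ is both right and left quasi-duo. My proof proposal amounts to invoking this principle with a self-contained justification.

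First, I would note that every maximal right ideal $M$ of $R$ automatically contains $J(R)$, since $J(R)$ is the intersection of all maximal right ideals. Thus, under the lattice correspondence between right ideals of $R$ containing $J(R)$ and right ideals of $R/J(R)$, the image $\bar M := M/J(R)$ is a maximal right ideal of $R/J(R)$. By Corollary \ref{booli}, the quotient $R/J(R)$ is boolean and, in particular, commutative, so every one-sided ideal of $R/J(R)$ is automatically two-sided. Hence $\bar M$ is an ideal of $R/J(R)$, and pulling back through the correspondence, $M$ is an ideal of $R$.

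The same argument applied to a maximal left ideal $M$ yields that $M$ is two-sided, establishing that $R$ is simultaneously right and left quasi-duo. The only step that is not entirely routine is the appeal to Corollary \ref{booli}, which has already been done; after that, the proof is essentially bookkeeping via the correspondence theorem for ideals. Since there is no genuine obstacle here, I would expect the write-up to be just a few lines, stating the corollary, the correspondence, and the commutativity consequence.
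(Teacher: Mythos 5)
Your proposal is correct and follows essentially the same route as the paper: both reduce to the fact that $R/J(R)$ is boolean (hence commutative) via Corollary \ref{booli}, observe that $M/J(R)$ is therefore a two-sided ideal of $R/J(R)$, and pull back using $J(R)\subseteq M$. The only cosmetic difference is that you phrase the pullback via the correspondence theorem while the paper writes out $rx\in M+J(R)\subseteq M$ explicitly.
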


\begin{proof}
Suppose $R$ is a strongly $\Delta$-clean ring. In view of Corollary \ref{booli}, $R/J(R)$ is boolean and hence it is commutative. Let $M$ be a right (left) maximal ideal of $R$. Then, one checks that $M/J(R)$ is an ideal of $R/J(R)$. Suppose that $x\in M$ and $r\in R$. Then, $\overline{rx} \in M/J(R)$, and thus $rx\in M+J(R)\subseteq M$. This shows that $M$ is an ideal of $R$. Therefore, $R$ is right (resp., left) quasi-duo, as asserted.
\end{proof}

We complete this section with the following suspected relationship between strongly nil-clean and strongly $\Delta$-clean rings. We recollect that, having in mind \cite{dl} or \cite{kwz}, a ring $R$ is strongly nil-clean exactly when $R/J(R)$ is boolean and $J(R)$ is nil.

\medskip

Specifically, the following is valid.

\begin{theorem}\label{nil}
A ring $R$ is strongly nil-clean if, and only if, $R$ is strongly $\Delta$-clean and $\Delta(R)$ is nil (that is, $\Delta(R)\subseteq Nil(R)$).
\end{theorem}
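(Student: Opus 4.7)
The plan is to exploit the characterization of strongly nil-clean rings recalled just before the theorem --- namely, that $R$ is strongly nil-clean if and only if $R/J(R)$ is boolean and $J(R)$ is nil --- together with the inclusion $J(R)\subseteq \Delta(R)$ noted in the introduction and with Corollary \ref{booli}, which already supplies a boolean quotient $R/J(R)$ for free from the strongly $\Delta$-clean hypothesis.

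For the sufficiency direction, suppose $R$ is strongly $\Delta$-clean and $\Delta(R)\subseteq \mathrm{Nil}(R)$. For any $r\in R$, a strongly $\Delta$-clean representation $r=e+d$ (with $ed=de$) automatically has $d\in \mathrm{Nil}(R)$ by hypothesis. This is exactly a strongly nil-clean representation, so $R$ is strongly nil-clean.

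For the necessity direction, assume $R$ is strongly nil-clean, so that $R/J(R)$ is boolean and $J(R)$ is nil. To see that $R$ is strongly $\Delta$-clean, take $r\in R$ and a strongly nil-clean representation $r=e+b$; the image $\bar b$ is nilpotent in the boolean ring $R/J(R)$ and is therefore zero, whence $b\in J(R)\subseteq \Delta(R)$, and $r=e+b$ is a strongly $\Delta$-clean representation. To see that $\Delta(R)$ is nil, it suffices to prove $\Delta(R)\subseteq J(R)$, since $J(R)$ is nil by hypothesis. Given $x\in \Delta(R)$, the defining condition applied with the unit $u=1$ yields $1-x\in U(R)$, so $\bar 1-\bar x\in U(R/J(R))$. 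But the boolean ring $R/J(R)$ satisfies $U(R/J(R))=\{\bar 1\}$, forcing $\bar x=0$, i.e., $x\in J(R)$, as needed.

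No serious obstacle is anticipated: in each direction the argument is essentially a one-step invocation of the known Jacobson-radical characterization of strongly nil-clean rings. The only point that requires explicit verification is the inclusion $\Delta(R)\subseteq J(R)$ in the necessity argument, and this is handled by the short reduction-modulo-$J(R)$ computation above, using only the defining property of $\Delta(R)$ and the fact that a boolean ring has no units other than $1$.
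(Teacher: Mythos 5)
Your proof is correct, but it follows a somewhat different route from the paper's. For sufficiency, the paper passes through Lemma \ref{lemma 3} (every $a-a^2$ lies in $\Delta(R)$, hence is nilpotent under the hypothesis) and then invokes the Hirano--Tominaga--Yaqub criterion \cite[Theorem 3]{hty}; you instead observe directly that a strongly $\Delta$-clean representation $r=e+d$ with $\Delta(R)\subseteq \mathrm{Nil}(R)$ is already, verbatim, a strongly nil-clean representation --- a more elementary, citation-free argument. For necessity, the paper shows $\Delta(R)\subseteq \mathrm{Nil}(R)$ by taking a strongly nil-clean representation $r=e+q$ of an element $r\in\Delta(R)$ and killing the idempotent via $1-e=(1+q)-r\in U(R)+\Delta(R)\subseteq U(R)$ (Lemma \ref{1.1}), whereas you prove the stronger containment $\Delta(R)\subseteq J(R)$ by reducing modulo $J(R)$ and using that a boolean ring has no unit other than $1$; both then conclude that the nilpotent part of a strongly nil-clean representation lies in $\Delta(R)$. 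Your version is essentially the alternative proof the authors themselves allude to in the remark following the theorem (via Corollary \ref{booli} and the $R/J(R)$-boolean characterization), and it has the advantage of being self-contained; the paper's version avoids any appeal to the structure of units in boolean rings but leans on the external result \cite{hty}. All steps in your argument check out: boolean rings are reduced, so the nilpotent $\bar b$ vanishes in $R/J(R)$, and taking $u=1$ in the defining condition of $\Delta(R)$ legitimately yields $1-x\in U(R)$.
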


\begin{proof}
Assume $R$ is a strongly nil-clean ring. Then, working with \cite[Theorem 2.7]{kwz}, we deduce that $R/J(R)$ is Boolean and $J(R)$ is nil. First, we intend to show that $\Delta(R)$ is nil. Let $r \in \Delta(R)$, and suppose $r = e + q$ is a strongly nil-clean representation. Then, $$1 - e = (1 + q) - r \in U(R) + \Delta(R) \subseteq U(R),$$ which ensures that $1 - e = 1$, and thus $e = 0$. Therefore, $r = q \in \text{Nil}(R)$. On the other hand, knowing that $R/J(R)$ is Boolean, it follows that $ Nil(R) \subseteq J(R) \subseteq \Delta(R)$. Hence, it follows at once that $Nil(R) = \Delta(R)$, which gives that $R$ is a strongly $\Delta(R)$-clean ring.

Conversely, as $R$ is strongly $\Delta$-clean, we detect from Lemma~\ref{lemma 3} that $a-a^2\in \Delta(R)$ and, by hypothesis, $a-a^2\in Nil(R)$. Consequently, viewing \cite[Theorem 3]{hty}, $R$ is strongly nil-clean, as claimed.
\end{proof}

It is worthy of noticing that an other proof of the last result can be reached by using Corollary~\ref{booli} and the comments for characterizing strong nil-cleanness quoted before the mentioned theorem.

\section{Some Classes of Strongly $\Delta$-Clean Rings}

Let $A$, $B$ be two rings, and let $M$, $N$ be an $(A,B)$-bi-module and a $(B,A)$-bi-module, respectively. Also, we consider the two bi-linear maps $$\phi :M\otimes_{B}N\rightarrow A$$ and $$\psi:N\otimes_{A}M\rightarrow B$$ that apply to the following properties:
$$Id_{M}\otimes_{B}\psi =\phi \otimes_{A}Id_{M},Id_{N}\otimes_{A}\phi =\psi \otimes_{B}Id_{N}.$$
For $m\in M$ and $n\in N$, we define $mn:=\phi (m\otimes n)$ and $nm:=\psi (n\otimes m)$. Now, the $4$-tuple $R=\begin{pmatrix}
	A & M\\
	N & B
\end{pmatrix}$ becomes to an associative ring with obvious matrix operations that is called a {\it Morita context ring}. Denote two-sided ideals $Im \phi$ and $Im \psi$ to $MN$ and $NM$, respectively, that are called the {\it trace ideals} of the Morita context ring. A Morita context $\begin{pmatrix}
	A & M\\
	N & B
\end{pmatrix}$ is called {\it trivial} if the context products are trivial, i.e., $MN=0$ and $NM=0$. We now see that
$$\begin{pmatrix}
	A & M\\
	N & B
\end{pmatrix}\cong T(A\times B, M\oplus N),$$
where
$\begin{pmatrix}
	A & M\\
	N & B
\end{pmatrix}$ is a trivial Morita context, bearing in mind \cite{20}.

\medskip

The next preliminary technicality makes sense.

\begin{lemma}\label{rad morita}
Let $R = \begin{pmatrix}
A & M \\
N & B
\end{pmatrix}$ be a Morita context such that $MN \subseteq J(A)$ and $NM \subseteq J(B)$. Then, the following three statements hold:

(1) \quad  $J(R) = \begin{pmatrix}
    J(A) & M \\
    N & J(B)
    \end{pmatrix}$ and hence $R/J(R) \cong [A/J(A)] \times [B/J(B)]$.

(2)    $U(R) =\begin{pmatrix}
    U(A) & M \\
    N & U(B)
    \end{pmatrix}$.

(3)    $\Delta(R) =  \begin{pmatrix}
    \Delta(A) & M \\
    N & \Delta(B)
    \end{pmatrix}$.
\end{lemma}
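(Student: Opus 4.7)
The plan is to prove (1) first and then derive (2) and (3) from it. Set $K := \begin{pmatrix} J(A) & M \\ N & J(B) \end{pmatrix}$. I would first verify that $K$ is a two-sided ideal of $R$: when computing a product of an arbitrary element of $R$ with an element of $K$, the only nontrivial point is that the diagonal entries of the product lie in $J(A)$ or $J(B)$, and this is where the hypotheses $MN \subseteq J(A)$ and $NM \subseteq J(B)$ are exactly what is needed. Once $K$ is an ideal, the natural projection sending a matrix to the pair of residues of its diagonal entries is a surjective ring homomorphism $R \to (A/J(A)) \times (B/J(B))$ with kernel $K$, so $R/K$ is semiprimitive and hence $J(R) \subseteq K$.

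For the reverse inclusion I would show $K$ is quasi-regular. Given $k = \begin{pmatrix} j_A & m \\ n & j_B \end{pmatrix} \in K$, I would invert $1 - k$ via the block-Schur-complement formula: since $1 - j_B \in U(B)$, the relevant Schur complement is $(1 - j_A) - m(1 - j_B)^{-1} n$, and $m(1 - j_B)^{-1} n \in MN \subseteq J(A)$, so this expression lies in $(1 - j_A) + J(A) \subseteq U(A) + J(A) = U(A)$ and is a unit in $A$. The standard two-by-two block-inverse formula then produces a two-sided inverse of $1-k$ in $R$, proving $K \subseteq J(R)$ and completing (1), together with the quotient isomorphism $R/J(R) \cong (A/J(A)) \times (B/J(B))$.

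Statement (2) follows quickly. Given $X = \begin{pmatrix} a & m \\ n & b \end{pmatrix}$, its image in $R/J(R) \cong (A/J(A)) \times (B/J(B))$ is $(\bar a, \bar b)$ by (1); invoking the standard principle that units lift modulo the Jacobson radical, $X \in U(R)$ if and only if $(\bar a, \bar b)$ is a unit in the product, equivalently $a \in U(A)$ and $b \in U(B)$. No information about $m$ or $n$ enters the criterion.

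For (3), I would use the definition of $\Delta$ together with (2). For the inclusion $\supseteq$, take $X = \begin{pmatrix} d_A & m \\ n & d_B \end{pmatrix}$ with $d_A \in \Delta(A)$ and $d_B \in \Delta(B)$, and an arbitrary unit $U = \begin{pmatrix} u_A & m' \\ n' & u_B \end{pmatrix} \in U(R)$; by (2), $u_A \in U(A)$ and $u_B \in U(B)$. Computing the diagonal of $1 - XU$ gives $1 - d_A u_A - m n'$ and $1 - n m' - d_B u_B$, and since the cross terms $mn'$ and $nm'$ lie in $J(A)$ and $J(B)$ while $1 - d_A u_A \in U(A)$ and $1 - d_B u_B \in U(B)$ by definition of $\Delta$, both diagonal entries are units, hence $1 - XU \in U(R)$ by (2). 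For the inclusion $\subseteq$, given $X = \begin{pmatrix} a & m \\ n & b \end{pmatrix} \in \Delta(R)$, test against the units $\begin{pmatrix} u_A & 0 \\ 0 & 1 \end{pmatrix}$ and $\begin{pmatrix} 1 & 0 \\ 0 & u_B \end{pmatrix}$ provided by (2), and apply (2) to the products $1 - XU$ to extract $1 - au_A \in U(A)$ and $1 - bu_B \in U(B)$ for all choices, forcing $a \in \Delta(A)$ and $b \in \Delta(B)$. The main obstacle throughout is the explicit quasi-regularity verification in (1); the arguments for (2) and (3) are then essentially bookkeeping on top of (1).
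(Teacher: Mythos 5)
Your proof is correct. For part (3) your argument is essentially the one the paper gives: test membership in $\Delta(R)$ against the diagonal units supplied by (2) to get the inclusion $\subseteq$, and for $\supseteq$ observe that the cross terms $mn'$ and $nm'$ in the diagonal entries of $1-XU$ fall into $J(A)$ and $J(B)$, so those entries stay units. Where you diverge is parts (1) and (2): the paper does not prove them at all, but simply cites Lemma 3.1 of Tang--Li--Zhou for them and says it suffices to prove (3). Your replacement is a clean, self-contained argument --- the ideal check for $K$ (where $MN\subseteq J(A)$, $NM\subseteq J(B)$ is exactly what keeps the diagonal in the radical), the projection onto $(A/J(A))\times(B/J(B))$ for $J(R)\subseteq K$, the Schur-complement inversion of $1-k$ for the reverse inclusion, and unit lifting modulo $J(R)$ for (2). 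This costs a page of routine verification but buys independence from the external reference; the mathematical content is the same, and all the block manipulations you invoke are legitimate in a Morita context ring since the products $m(1-j_B)^{-1}n\in MN\subseteq J(A)$ and their companions are exactly the ones the context provides.
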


\begin{proof}
Consulting with \cite[Lemma 3.1]{tang}, it suffices to prove only (3). To this end, suppose $D=\begin{pmatrix}
d & x \\
y & b
\end{pmatrix} \in \Delta(R)$. So, via point (2), for any $u,v \in U(A),U(B)$ we have $U=\begin{pmatrix}
u & 0 \\
0 & v
\end{pmatrix} \in U(R)$, and thus $1-UD \in U(R)$. This leads to $1-ud, 1-vb \in U(A), U(B)$ and, consequently, $d,b \in \Delta(A),\Delta(B)$.

Now, suppose $$D=\begin{pmatrix}
d & x \\
y & b
\end{pmatrix} \in \begin{pmatrix}
\Delta(A) & M \\
N & \Delta(B)
\end{pmatrix}$$ and $$U=\begin{pmatrix}
u & x' \\
y' & v
\end{pmatrix} \in U(R).$$ Then, since $MN \subseteq J(A)$ and $NM \subseteq J(B)$, we obtain
\[
1- UD=\begin{pmatrix}
1-ud-x'y & * \\
* & 1-vb-y'x
\end{pmatrix} \in \begin{pmatrix}
U(A) & M \\
N & U(B)
\end{pmatrix} = U(R),
\]
as expected
\end{proof}

We are now in a position to establish the following.

\begin{theorem}\label{com morita}
Let $A$ be a commutative subring of $B$ with $\Delta(A)1_B \subseteq \Delta(B)$, and let $R = \begin{pmatrix}
A & M\\
N & B
\end{pmatrix}$ be a trivial Morita context such that, for every $u \in U(A)$ and $m,n \in M,N$, the equalities $um=m(u1_B)$ and $nu=(u1_B)n$ are fulfilled. Then, the following two statements are equivalent:

(1) \quad $R$ is a strongly $\Delta$-clean ring.

(2) \quad $A$ and $B$ are strongly $\Delta$-clean rings.

\end{theorem}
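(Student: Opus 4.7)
My plan for this equivalence is to handle the two directions separately, using a corner-ring argument in one direction and a Peirce-style construction in the other. The easy direction $(1)\Rightarrow (2)$ proceeds via the matrix idempotents $e_1 = \begin{pmatrix} 1 & 0 \\ 0 & 0 \end{pmatrix}$ and $e_2 = \begin{pmatrix} 0 & 0 \\ 0 & 1 \end{pmatrix}$, whose corner rings satisfy $e_1 R e_1 \cong A$ and $e_2 R e_2 \cong B$; Corollary \ref{corner ring} then transfers the strongly $\Delta$-clean property from $R$ down to both $A$ and $B$.

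For the reverse direction $(2)\Rightarrow (1)$, given an arbitrary $T = \begin{pmatrix} a & m \\ n & b \end{pmatrix} \in R$, I would start from strongly $\Delta$-clean decompositions $a = e_A + d_A$ in $A$ and $b = e_B + d_B$ in $B$ (the first having $e_A d_A = d_A e_A$ for free by commutativity of $A$, the second carrying $e_B d_B = d_B e_B$ from the chosen decomposition), and propose the candidate
\[
E = \begin{pmatrix} e_A & x \\ y & e_B \end{pmatrix}, \qquad D = T - E = \begin{pmatrix} d_A & m - x \\ n - y & d_B \end{pmatrix},
\]
with the off-diagonals $x \in M$ and $y \in N$ yet to be chosen. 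Since the Morita context is trivial, the cross products $xy \in MN$ and $yx \in NM$ vanish, so the equation $E^2 = E$ reduces to the Peirce-type relations $e_A x + x e_B = x$ and $e_B y + y e_A = y$. In the same trivial setting $MN = 0 \subseteq J(A)$ and $NM = 0 \subseteq J(B)$, so Lemma \ref{rad morita}(3) applies and gives $D \in \Delta(R)$ automatically from $d_A \in \Delta(A)$, $d_B \in \Delta(B)$, with no constraint placed on the off-diagonals.

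The heart of the argument, and its main obstacle, is the commuting condition $ED = DE$. Its diagonal identities are free from commutativity of $A$ and from $e_B d_B = d_B e_B$, but the off-diagonal entries reduce to the Sylvester-type equations
\[
a x - x b = e_A m - m e_B, \qquad b y - y a = e_B n - n e_A.
\]
To attack these, my plan is to invoke the compatibility $u\mu = \mu(u 1_B)$ at the unit $u = 1 + d_A \in U(A)$, which yields the transfer identity $d_A \mu = \mu\,\widetilde{d_A}$, where $\widetilde{d_A} := d_A 1_B$ belongs to $\Delta(B)$ by the hypothesis $\Delta(A) 1_B \subseteq \Delta(B)$; the analogous identity on $N$ comes from $nu = (u 1_B) n$. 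With these transfers in hand, I would try the candidate $x = e_A m(1-e_B) + (1-e_A) m e_B$ (and the mirror choice for $y$), which automatically satisfies the Peirce relation making $E$ idempotent, and then perform a Peirce decomposition of $M$ relative to $\{e_A, 1-e_A\}$ on the left and $\{e_B, 1-e_B\}$ on the right so the four blocks may be verified separately. The delicate step, where all three hypotheses (commutativity of $A$, the containment $\Delta(A) 1_B \subseteq \Delta(B)$, and the bimodule compatibility) must cooperate simultaneously, is absorbing the residual term of shape $m(\widetilde{d_A} - d_B)$ produced by this expansion; handling this residue cleanly is the part of the argument most in need of care.
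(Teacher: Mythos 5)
Your direction $(1)\Rightarrow(2)$ is exactly the paper's argument (corner rings at the diagonal idempotents plus Corollary \ref{corner ring}), and your reduction of $(2)\Rightarrow(1)$ to the Peirce relation $e_Ax+xe_B=x$ together with the Sylvester equation $ax-xb=e_Am-me_B$ (and its mirror for $y$), with $D\in\Delta(R)$ coming for free from Lemma \ref{rad morita}(3), is also the correct skeleton and coincides with the paper's. The gap is that you never actually solve the Sylvester equation. Writing $e=e_A$, $f=e_B$, $d=d_A$, $d'=d_B$, your candidate $x=em(1-f)+(1-e)mf$ gives
\[
ax-xb=(em-mf)+\bigl(dx-xd'\bigr)=(em-mf)+x\bigl(d1_B-d'\bigr),
\]
where $dx=x(d1_B)$ follows from the compatibility hypothesis applied to the unit $1+d$. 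The residual term $x(d1_B-d')$ that you flag as ``most in need of care'' is not a removable technicality: it has no reason to vanish, so this candidate only works when $d1_B=d'$ (for instance in the idempotent case $d=d'=0$), and as written the commutation condition $ED=DE$ is not established.

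The paper closes exactly this gap by building the discrepancy into the off-diagonal entries: it sets $\gamma:=d1_B-d'+(1-2f)\in\Delta(B)+\Delta(B)+U(B)\subseteq U(B)$ and takes $\alpha:=(em-mf)\gamma^{-1}$, $\beta:=\gamma^{-1}(ne-fn)$. With this choice one still verifies the Peirce relations $e\alpha+\alpha f=\alpha$ and $\beta e+f\beta=\beta$, so $E$ is idempotent, and unwinding $\alpha\gamma=em-mf$ using $\alpha(d1_B)=d\alpha$ and the Peirce relation yields precisely $a\alpha-\alpha b=em-mf$, i.e.\ the Sylvester identity $(ED)_{12}=(DE)_{12}$ (and symmetrically for $\beta$). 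So the missing idea is to divide the natural right-hand side $em-mf$ by the unit $\gamma$ manufactured from $d1_B-d'$ and the involution $1-2f$; without some device of this kind your fixed Peirce-block candidate cannot absorb the residue you identified.
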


\begin{proof}
Set $e:=\begin{pmatrix}
1 & 0\\
0 & 0
\end{pmatrix}$. Thus, $e \in R$ is an idempotent with $eRe \cong A$ and $(1-e)R(1-e) \cong B$. Therefore, Corollary \ref{corner ring} gives that $A$ and $B$ are strongly $\Delta$-clean rings.

Conversely, suppose both $A$ and $B$ are strongly $\Delta$-clean. Take $S=\begin{pmatrix}
a & m\\
n & b
\end{pmatrix} \in R$, where $a=e+d$ and $b=f+d'$ are strongly $\Delta$-clean representations in $A$ and $B$, respectively. Then, we receive
\[
\gamma =d1_B -d' +(1-2f)1_B \in \Delta(B)+\Delta(B)+U(B) \subseteq \Delta(B)+U(B) \subseteq U(B),
\]
so we may define
\[
\alpha := (em-mf)\gamma^{-1}, \quad \beta := \gamma^{-1} (ne-fn).
\]
Now, we manage to show that
\[
S=\begin{pmatrix}
a & m\\
n & b
\end{pmatrix} =\begin{pmatrix}
e & \alpha\\
\beta & f
\end{pmatrix} +\begin{pmatrix}
d & m-\alpha\\
n-\beta & d'
\end{pmatrix}
\]
is a strongly $\Delta$-clean representation for $S$. In fact, a consultation with Lemma \ref{rad morita} enables us that $D :=\begin{pmatrix}
d & m-\alpha\\
n-\beta & d'
\end{pmatrix} \in \Delta(R)$.

Next, we prove that $E :=\begin{pmatrix}
e & \alpha\\
\beta & f
\end{pmatrix}$ is an idempotent. Indeed, from the definitions of $\alpha$ and $\beta$ and the commutativity of $A$, we arrive at:
\begin{align*}
(i) & \quad e\alpha +\alpha f=(em-emf)\gamma^{-1}+(emf-mf)\gamma^{-1}=(em-mf)\gamma^{-1}=\alpha. \\
(ii) & \quad \beta e+f\beta=\gamma^{-1} (ne-fne)+\gamma^{-1} (fne-fn)=\gamma^{-1} (ne-fn)=\beta.
\end{align*}
Thus, from (i) and (ii), we get
\[
E^2=\begin{pmatrix}
e & e\alpha +\alpha f\\
\beta e+f\beta & f
\end{pmatrix}= \begin{pmatrix}
e & \alpha\\
\beta & f
\end{pmatrix} =E.
\]
So, $E$ is really an idempotent.

Furthermore, it remains to demonstrate that $ED=DE$. In this aspect, since $a=e+d$ and $b=f+d'$ are strongly $\Delta$-clean representations, we write $(ED)_{11}=(DE)_{11}$ and $(ED)_{22}=(DE)_{22}$. And since $\alpha = (em-mf)(d1_B -d' +(1-2f))^{-1}$, we can write
\[
em-me=\alpha d1_B - \alpha d' +\alpha(1-2f)=d \alpha - \alpha d' +\alpha - 2\alpha f = d \alpha - \alpha d' +e \alpha - \alpha f
\]
which, in turn, assures at once that
\[
e(m- \alpha ) +\alpha d'=d\alpha +(m- \alpha )f \Rightarrow (ED)_{12}=(DE)_{12}.
\]
Analogously, from the given above definition of $\beta$, we can illustrate that $(ED)_{21}=(DE)_{21}$. Therefore, combining these two equalities, $R$ is a strongly $\Delta$-clean ring, as promised.
\end{proof}

To visualize the last assertion, we consider the trivial Morita context $R_1 =
\begin{pmatrix}
\mathbb{Z}_4 & 2\mathbb{Z}_4 \\
2\mathbb{Z}_4 & \mathbb{Z}_4
\end{pmatrix}$ and
$R_2 =
\begin{pmatrix}
\mathbb{Z}_4 & 2\mathbb{Z}_4 \\
0 & \mathbb{Z}_4
\end{pmatrix}$,
where the context products are the same as the product in $\mathbb{Z}_4$. Thereby, $R_1$ and $R_2$ are both strongly $\Delta$-clean in virtue of Theorem~\ref{com morita}.

\medskip

As a non-trivial consequence, we obtain:

\begin{corollary} \label{triangular}
Let $R$ be a commutative ring. Then, the following four statements are equivalent:
	
(1) $R$ is strongly $\Delta$-clean.
	
(2) $R$ is uniquely clean.
	
(3) $T_n(R)$ is strongly $\Delta$-clean for some $n \geq 1$.
	
(4) $T_n(R)$ is strongly $\Delta$-clean for all $n \geq 1$.
\end{corollary}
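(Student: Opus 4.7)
The plan is to prove a cycle of implications, most of which are short consequences of results already in the excerpt, and to carry the main load on one induction argument. Concretely, I would first argue $(1)\Leftrightarrow(2)$ directly from Theorem~\ref{cor 2}, then $(4)\Rightarrow(3)$ trivially, then $(3)\Rightarrow(1)$ via the corner ring reduction from Corollary~\ref{corner ring}, and finally $(1)\Rightarrow(4)$ by induction on $n$ using the Morita context result Theorem~\ref{com morita}.

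For $(1)\Leftrightarrow(2)$: since $R$ is commutative, every idempotent is central, so Theorem~\ref{cor 2} yields the equivalence with no further work. For $(4)\Rightarrow(3)$: obvious by specializing. For $(3)\Rightarrow(1)$: let $e=e_{11}\in T_n(R)$; then $e$ is an idempotent and $eT_n(R)e\cong R$, so Corollary~\ref{corner ring} forces $R$ to be strongly $\Delta$-clean.

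The substantive direction is $(1)\Rightarrow(4)$. I would induct on $n$, with $n=1$ being the hypothesis. For the inductive step, view $T_n(R)$ as the trivial Morita context
\[
T_n(R)\;\cong\;\begin{pmatrix} R & M \\ 0 & T_{n-1}(R) \end{pmatrix},\qquad M=R^{n-1},\quad N=0,
\]
with the obvious $(R,T_{n-1}(R))$-bimodule structure on $M$ (left $R$-scalar multiplication and right matrix multiplication). I would apply Theorem~\ref{com morita} with $A=R$, $B=T_{n-1}(R)$, embedded via $r\mapsto rI_{n-1}$. The bimodule compatibility conditions are immediate: $N=0$ handles the second equality trivially, while $um=m(uI_{n-1})$ holds coordinatewise because $R$ is commutative. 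With the inductive hypothesis that $T_{n-1}(R)$ is strongly $\Delta$-clean, Theorem~\ref{com morita} then delivers strong $\Delta$-cleanness of $T_n(R)$.

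The one nontrivial hypothesis to verify is $\Delta(R)\cdot 1_B\subseteq\Delta(B)$, i.e., $\Delta(R)\,I_{n-1}\subseteq\Delta(T_{n-1}(R))$. This I expect to be the main (though still mild) obstacle, and I would handle it as follows: fix $d\in\Delta(R)$ and an arbitrary $U=(u_{ij})\in U(T_{n-1}(R))$. A standard fact (which I would quickly note) is that $U\in U(T_{n-1}(R))$ forces each diagonal entry $u_{ii}\in U(R)$. Then $I_{n-1}-(dI_{n-1})U=I_{n-1}-dU$ is upper triangular with diagonal entries $1-du_{ii}$, each of which lies in $U(R)$ by the defining property of $\Delta(R)$. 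Hence $I_{n-1}-dU\in U(T_{n-1}(R))$, which gives $dI_{n-1}\in\Delta(T_{n-1}(R))$. With this in hand the inductive step closes, completing $(1)\Rightarrow(4)$ and the cycle.
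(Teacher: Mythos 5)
Your proposal is correct and follows essentially the route the paper intends: the corollary is stated there as a direct consequence of Theorem~\ref{com morita} (with Theorem~\ref{cor 2} giving $(1)\Leftrightarrow(2)$ and Corollary~\ref{corner ring} giving $(3)\Rightarrow(1)$), and your induction realizing $T_n(R)$ as the trivial Morita context $\left(\begin{smallmatrix} R & R^{n-1} \\ 0 & T_{n-1}(R)\end{smallmatrix}\right)$ is exactly the intended argument. Your explicit verification that $\Delta(R)I_{n-1}\subseteq\Delta(T_{n-1}(R))$, via the diagonal-entry criterion for units of triangular matrices, is a detail the paper leaves implicit and is carried out correctly.
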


A family of strongly $\Delta$-rings that are {\it not} uniquely clean are subsumed by Theorem~\ref{triangular}.

\medskip

Our next basic result is the following necessary and sufficient condition.

\begin{theorem}\label{loc mori}
Let $R = \begin{pmatrix}
A & M \\
N & B
\end{pmatrix}$ be a trivial Morita context, where $A$ and $B$ are local (or have no non-trivial idempotents) rings. Then, $R$ is a strongly $\Delta$-clean ring if, and only if, $A/J(A) \cong  \mathbb{F}_2 \cong B/J(B)$ and, for every $a,b \in U(A),J(B)$ and $m,n \in M,N$, there are $x,y \in M,N$ such that
\begin{align*}
m &= ax - xb, \\
n &= ya - by.
\end{align*}
\end{theorem}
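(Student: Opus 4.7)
The plan is to exploit the restricted form of idempotents available when $A$ and $B$ admit only trivial idempotents, together with the description $\Delta(R)=\begin{pmatrix}\Delta(A)&M\\N&\Delta(B)\end{pmatrix}$ furnished by Lemma~\ref{rad morita} (applicable because $MN=NM=0$ in a trivial Morita context). The two technical pillars are: the constraint that each diagonal entry of a candidate idempotent in $R$ must lie in $\{0,1\}$, and the collapse of all cross-products $MN$, $NM$ to zero, which reduces the commutation $ED=DE$ to two scalar identities.

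For necessity, Corollary~\ref{corner ring} makes $A$ and $B$ strongly $\Delta$-clean, and Lemma~\ref{ni idempotent} upgrades ``no non-trivial idempotents'' to ``local''; the earlier lemma characterizing local strongly $\Delta$-clean rings then gives $A/J(A)\cong\mathbb{F}_2\cong B/J(B)$, so in particular $\Delta(A)=J(A)$ and $\Delta(B)=J(B)$. Fixing $a\in U(A)$, $b\in J(B)$, $m\in M$, $n\in N$, I decompose $S=\begin{pmatrix}a&m\\n&b\end{pmatrix}=E+D$ with $E=\begin{pmatrix}e&x\\y&f\end{pmatrix}$ an idempotent, $D\in\Delta(R)$, and $ED=DE$. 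Since the diagonals of $E$ lie in $\{0,1\}$, the condition $a-e\in J(A)$ combined with $a\in 1+J(A)$ forces $e=1$, while $b-f\in J(B)$ with $b\in J(B)$ forces $f=0$. Computing $ED$ and $DE$ entrywise (all $MN$- and $NM$-products vanishing), the identity $ED=DE$ collapses to precisely $m=ax-xb$ and $n=ya-by$.

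For sufficiency, the hypothesis $A/J(A)\cong\mathbb{F}_2$ makes $A$ local with $A=U(A)\cup J(A)$ (and similarly for $B$), splitting the analysis for $S=\begin{pmatrix}a&m\\n&b\end{pmatrix}$ into four cases. When $a\in U(A)$ and $b\in U(B)$, take $S=I+(S-I)$, noting $S-I\in\Delta(R)$. When $a\in J(A)$ and $b\in J(B)$, take $S=0+S\in\Delta(R)$. In the central case $a\in U(A)$, $b\in J(B)$, the hypothesis supplies $x\in M$, $y\in N$ with $m=ax-xb$ and $n=ya-by$; then $E=\begin{pmatrix}1&x\\y&0\end{pmatrix}$ is an idempotent (routine check using $MN=NM=0$), $D=S-E\in\Delta(R)$ by the diagonal criterion, and $ED=DE$ follows from the two identities just solved. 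The remaining case $a\in J(A)$, $b\in U(B)$ reduces to the central case applied to $1-S$, whose diagonal $(1-a,1-b)\in U(A)\times J(B)$: from $1-S=E'+D'$ one reads off $S=(1-E')+(-D')$ with $1-E'$ idempotent, $-D'\in\Delta(R)$, and commutation inherited.

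The only real obstacle is the entrywise bookkeeping of $ED$ versus $DE$; but the trivial-context assumption makes every off-diagonal cross-product vanish, so the commutation condition cleanly reduces to the two scalar identities $m=ax-xb$ and $n=ya-by$ with no further manipulation required.
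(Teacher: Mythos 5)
Your proposal is correct and follows essentially the same route as the paper's proof: necessity via Corollary~\ref{corner ring} and the classification of idempotents of the trivial context (forcing diagonal $(1,0)$ when $a\in U(A)$, $b\in J(B)$, with $ED=DE$ collapsing to the two scalar identities because $MN=NM=0$), and sufficiency via the same four-case split on the diagonal. Your only deviation — handling the case $a\in J(A)$, $b\in U(B)$ by passing to $1-S$ rather than writing the decomposition directly — is a harmless repackaging of the paper's Case (iv).
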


\begin{proof}
Reviewing Lemma \ref{ni idempotent}, we know that in strongly $\Delta$-clean rings $R$ is local if, and only if, it has no non-trivial idempotents. Considering this fact, with no loss of generality, we will give a proof only for local rings.

Put $e :=\begin{pmatrix}
1 & 0 \\
0 & 0
\end{pmatrix}$. So, $e \in R$ is an idempotent with $eRe \cong A$ and $(1-e)R(1-e) \cong B$. In view of Corollary \ref{corner ring}, $A$ and $B$ are both strongly $\Delta$-clean rings. Moreover, it follows from Corollary \ref{booli} that both $A/J(A)$ and $B/J(B)$ are Boolean, and since $A$ and $B$ are local, we conclude that $A/J(A) \cong  \mathbb{F}_2  \cong B/J(B)$.

Via direct computations, it is easy to show that
\[
\text{Id}(R) = \left\{0,1, \begin{pmatrix}
1 & x \\
y & 0
\end{pmatrix},
\begin{pmatrix}
0 & x' \\
y' & 1
\end{pmatrix} : x,x' \in M \text{ and } y,y' \in N\right\}.
\]

Let $a,b \in U(A),J(B)$ and $m,n \in M,N$. Consider now $S :=\begin{pmatrix}
a & m \\
n & b
\end{pmatrix}$. Since, for any $s,k \in M,N$, neither $S-0$, $S-1$, nor $S-\begin{pmatrix}
0 & s \\
k & 1
\end{pmatrix}$ belongs to $\Delta(R)$, there must exist $x,y \in M,N$ such that $S-\begin{pmatrix}
1 & x \\
y & 0
\end{pmatrix} \in \Delta(R)$, and they obviously commute. This forces that
\[
m = ax - xb \quad \text{and} \quad n = ya - by.
\]

Now, assume $A/J(A) \cong  \mathbb{F}_2  \cong B/J(B)$ and that, for any $a,b \in U(A),J(B)$ and $m,n \in M,N$, there exist $x,y \in M,N$ satisfying $m=ax-xb$ and $n=ya-by$. We prove $R$ is strongly $\Delta$-clean. To that goal, set $S :=\begin{pmatrix}
a & m \\
n & b
\end{pmatrix}$, and consider four distinguish cases:

\begin{enumerate}
\item If $a \in U(A)$ and $b \in U(B)$, then from Theorem \ref{1}, $a-1\in \Delta(A)=J(A)$ and $b-1 \in \Delta(B)=J(B)$. Thus, by Lemma \ref{rad morita},
\[
S-1 = \begin{pmatrix}
a-1 & m \\
n & b-1
\end{pmatrix} \in J(R) \subseteq \Delta(R),
\]
so $S$ has a strongly $\Delta$-clean decomposition.

\item If $a \in J(A)$ and $b \in J(B)$, then from Lemma \ref{rad morita}, $S \in J(R)\subseteq \Delta(R)$, so $S$ has a strongly $\Delta$-clean decomposition.

\item If $a \in U(A)$ and $b \in J(B)$, since $A$ is a $\Delta U$ ring, we find $a-1 \in \Delta(A)$. By assumption, there are $x,y \in M,N$ such that $m=ax-xb$ and $n=ya-by$. Then,
\[
S := \begin{pmatrix}
a & m \\
n & b
\end{pmatrix} = \begin{pmatrix}
1 & x \\
y & 0
\end{pmatrix} + \begin{pmatrix}
a-1 & m-x \\
n-y & b
\end{pmatrix}
\]
gives a strongly $\Delta$-clean decomposition of $S$.

\item If $a \in J(A)$ and $b \in U(B)$, since $B$ is a $\Delta U$ ring, we have $b-1 \in \Delta(B)=J(B)$ and $a-1 \in U(A)$. By assumption, there are $x,y \in M,N$ such that
\[
m = (a-1)x - x(b-1) \quad \text{and} \quad n = y(a-1) - (b-1)y,
\]
which means $m=ax-xb$ and $n=ya-by$. Thus,
\[
S = \begin{pmatrix}
a & m \\
n & b
\end{pmatrix} = \begin{pmatrix}
0 & -x \\
-y & 1
\end{pmatrix} + \begin{pmatrix}
a & m+x \\
n+y & b-1
\end{pmatrix}
\]
gives a strongly $\Delta$-clean decomposition of $S$.
\end{enumerate}

From these four cases, we conclude that $R$ is a strongly $\Delta$-clean ring, as wanted.
\end{proof}

As three different consequences, we yield:

\begin{corollary}
Let $R = \begin{pmatrix}
A & M \\
0 & B
\end{pmatrix}$ be a formal triangular matrix, where $A$ and $B$ are local (or have no non-trivial idempotents) rings, and $M$ an $(A, B)$-bimodule. Then, $R$ is strongly $\Delta$-clean if, and only if,

(1) $A/J(A) \cong \mathbb{F}_2 \cong B/J(B)$.

(2) If $a \in U(A)$, $b \in J(B)$ and $m \in M$, there is $x \in V$ such that $m = ax - xb$.
\end{corollary}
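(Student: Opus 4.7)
The plan is to deduce this corollary as a direct specialization of Theorem \ref{loc mori}, taking $N = 0$. A formal upper triangular matrix ring $\begin{pmatrix} A & M \\ 0 & B \end{pmatrix}$ is precisely the trivial Morita context in which $N = 0$; in that case the trace products $MN$ and $NM$ both vanish, so they lie trivially inside $J(A)$ and $J(B)$, and the structural assumptions needed to invoke Theorem \ref{loc mori} are satisfied without further checking. The local-ring (or ``no non-trivial idempotents'') hypothesis on $A$ and $B$ carries over verbatim, so the theorem applies.

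Under this specialization, condition (1) of Theorem \ref{loc mori}, namely $A/J(A) \cong \mathbb{F}_2 \cong B/J(B)$, transfers unchanged and becomes condition (1) of the corollary. The second condition of Theorem \ref{loc mori} demands that, for all $a \in U(A)$, $b \in J(B)$, $m \in M$ and $n \in N$, there exist $x \in M$ and $y \in N$ with $m = ax - xb$ and $n = ya - by$. Because $N = 0$, the symbols $n$ and $y$ must both equal $0$, whereby the equation $n = ya - by$ collapses to $0 = 0$ and holds automatically. What survives is precisely the relation $m = ax - xb$ with $x \in M$, which is condition (2) of the corollary.

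Since the corollary is a clean restriction of a theorem just established, I do not anticipate any substantive obstacle; the only step worth emphasizing is the observation that the formal triangular matrix ring genuinely fits into the trivial Morita context framework of Theorem \ref{loc mori} once one sets $N = 0$, which is immediate from the matrix multiplication rule. Hence both directions of the corollary follow at once from the corresponding directions of Theorem \ref{loc mori}.
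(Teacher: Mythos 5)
Your proposal is correct and matches the paper's intent exactly: the paper states this corollary without proof as an immediate consequence of Theorem \ref{loc mori}, and the intended argument is precisely the specialization to the trivial Morita context with $N=0$, under which the condition $n = ya - by$ becomes vacuous and only $m = ax - xb$ survives (the ``$x \in V$'' in the statement is evidently a typo for $x \in M$, as you read it).
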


Let $ a \in R $. The mappings $ l_a: R \to R $ and $ r_a: R \to R $ represent the abelian group endomorphisms defined by $ l_a(r) = ar $ and $ r_a(r) = ra $ for all $ r \in R $. Consequently, the expression $ l_a - r_b $ defines an abelian group endomorphism such that $ (l_a - r_b)(r) = ar - rb $ for any $ r \in R $. In accordance with
\cite{diesl}, a local ring $ R $ is classified as {\it bleached} if, for any $ a \in U(R) $ and $ b \in J(R) $, both
$l_a - r_b$ and $l_b - r_a$ are surjective.

The category of bleached local rings includes many well-known examples, such as commutative local rings, local rings with nil Jacobson radicals, and local rings for which some power of each element of their Jacobson radical is central (see cf. \cite[Example 13]{bdd}).

\begin{corollary}\label{11}
Let $R$ be a local ring and $n\geq 2$. Then, the following two conditions are equivalent:
	
(1) $T_n(R)$ is a strongly $\Delta$-clean ring.
	
(2) \( R \) is a bleached ring and \( R/J(R) \cong \mathbb{F}_2 \).
\end{corollary}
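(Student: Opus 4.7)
For the direction $(1) \Rightarrow (2)$, my plan is to exploit corner-ring reductions together with the formal triangular matrix corollary just above. The idempotent $e = E_{11} + E_{22} \in T_n(R)$ yields a corner $e T_n(R) e \cong T_2(R)$, so Corollary \ref{corner ring} makes $T_2(R)$ strongly $\Delta$-clean. The preceding formal triangular matrix corollary then delivers $R/J(R) \cong \mathbb{F}_2$ together with the fact that, for every $a \in U(R)$ and $b \in J(R)$, the map $l_a - r_b \colon R \to R$ is surjective. To obtain the second surjectivity needed for bleachedness (that of $l_b - r_a$), I would use $R/J(R) \cong \mathbb{F}_2$ to write $a = 1 + a'$ with $a' \in J(R)$ and rewrite $bx - xa = m$ as $(b-1)x - x a' = m$; since $b - 1 \in -1 + J(R) \subseteq U(R)$ and $a' \in J(R)$, this equation is solvable by the first bleached condition already in hand.

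For $(2) \Rightarrow (1)$, I plan to construct a strongly $\Delta$-clean representation directly for each $A = (a_{ij}) \in T_n(R)$. Since $R$ is local, $\Delta(R) = J(R)$ and one verifies that $\Delta(T_n(R))$ coincides with $J(T_n(R))$, namely the upper triangular matrices whose diagonal entries lie in $J(R)$. The hypothesis $R/J(R) \cong \mathbb{F}_2$ forces each diagonal entry $a_{ii}$ into exactly one of the cosets $J(R)$ or $1 + J(R)$, so I would define $\epsilon_i \in \{0,1\}$ by $a_{ii} - \epsilon_i \in J(R)$ and take these as the diagonal of the sought idempotent $E$. The off-diagonal entries $e_{ij}$ for $i < j$ are then built by induction on the superdiagonal level $j - i$.

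At each inductive step, the commutation relation $EA = AE$ reduces at position $(i, j)$ to
\[
a_{ii}\, e_{ij} - e_{ij}\, a_{jj} = (\epsilon_i - \epsilon_j)\, a_{ij} + \sum_{i<k<j}\bigl(e_{ik}\, a_{kj} - a_{ik}\, e_{kj}\bigr),
\]
whose right-hand side is already determined by previously constructed entries. When $\epsilon_i \neq \epsilon_j$, exactly one of $a_{ii}, a_{jj}$ is a unit and the other lies in $J(R)$, so the bleached hypothesis yields surjectivity of $l_{a_{ii}} - r_{a_{jj}}$ and hence a valid $e_{ij}$. When $\epsilon_i = \epsilon_j$, the companion idempotency equation either forces $e_{ij}$ outright (in the case $\epsilon_i = \epsilon_j = 0$, where it reads $e_{ij} = -\sum_{i<k<j} e_{ik} e_{kj}$) or allows a kernel adjustment to secure both $E^2 = E$ and $EA = AE$ simultaneously.

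The main technical obstacle I anticipate lies in the direction $(2) \Rightarrow (1)$: one must show that the two requirements $E^2 = E$ and $EA = AE$ can always be met concurrently at each position $(i, j)$, despite being coupled through the cumulative data produced at earlier stages of the induction. Verifying that bleachedness together with $R/J(R) \cong \mathbb{F}_2$ really suffices to resolve every case of this coupled system — and that all consistency constraints close correctly — is the delicate core of the argument.
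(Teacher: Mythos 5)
Your direction $(1)\Rightarrow(2)$ follows the paper's own route: corner down to $T_2(R)$ via Corollary \ref{corner ring} and invoke Theorem \ref{loc mori}. Your additional step deriving surjectivity of $l_b-r_a$ from that of $l_{b-1}-r_{a'}$ (writing $a=1+a'$ with $a'\in J(R)$ and noting $b-1\in U(R)$) is in fact \emph{needed}: the Morita-context condition for $T_2(R)$ only delivers surjectivity of $l_a-r_b$ for $a\in U(R)$, $b\in J(R)$, since the $N$-side condition is vacuous when $N=0$. The paper glosses over this point, so here your argument is a genuine improvement.

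The gap is in $(2)\Rightarrow(1)$. The paper disposes of this direction in one line: by \cite[Theorem 4.4]{csj}, a bleached local ring with $R/J(R)\cong\mathbb{F}_2$ has $T_n(R)$ strongly $J$-clean, and since $J(S)\subseteq\Delta(S)$ for every ring $S$, strongly $J$-clean implies strongly $\Delta$-clean. You instead attempt to rebuild the idempotent $E$ superdiagonal by superdiagonal, and the construction is not carried through. Concretely: when $\epsilon_i=\epsilon_j$, the idempotency equation at $(i,j)$ \emph{forces} $e_{ij}$ in terms of earlier entries (namely $e_{ij}=\sum_{i<k<j}e_{ik}e_{kj}$ when $\epsilon_i=\epsilon_j=0$ and $e_{ij}=-\sum_{i<k<j}e_{ik}e_{kj}$ when $\epsilon_i=\epsilon_j=1$; note your sign is off in the first case), and the commutation equation $a_{ii}e_{ij}-e_{ij}a_{jj}=(\epsilon_i-\epsilon_j)a_{ij}+\sum_{i<k<j}(e_{ik}a_{kj}-a_{ik}e_{kj})$ then becomes a consistency condition to be \emph{verified}, not solved --- and here both $a_{ii},a_{jj}$ lie in $U(R)$ or both in $J(R)$, so bleachedness gives no surjectivity to exploit. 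When $\epsilon_i\neq\epsilon_j$, solving the commutation equation for $e_{ij}$ is fine, but idempotency additionally demands $\sum_{i<k<j}e_{ik}e_{kj}=0$, a constraint on entries already fixed at earlier stages. Your ``kernel adjustment'' remark addresses neither issue, and you yourself flag this coupled system as unresolved; it is precisely the content of the cited strongly-$J$-clean theorem, whose proof proceeds by induction on $n$ via block decompositions and Sylvester equations $ax-xB=c$ rather than by filling in superdiagonals independently. The clean repair is simply to quote that result together with the inclusion $J\subseteq\Delta$, as the paper does.
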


\begin{proof}
$(1) \Rightarrow (2)$. As $T_n(R)$ is a strongly $\Delta$-clean ring, employing Corollary \ref{corner ring} we also derive that $T_2(R)$ is a strongly $\Delta$-clean ring. Therefore, Theorem \ref{loc mori} allows us to conclude that $R/J(R) \cong \mathbb{F}_2$ and $R$ is a bleached ring, as needed.

$(2) \Rightarrow (1)$. Because \cite[Theorem 4.4]{csj} is true, $T_n(R)$ is strongly $J$-clean. However, we know that $J(R)\subseteq \Delta(R)$ holds always, whence $T_n(R)$ is a strongly $\Delta$-clean ring, as required.
\end{proof}

\begin{corollary}
Let $R$ be a commutative local ring. Then, the following two conditions are equivalent:
	
(1) $T_n(R)$ is a strongly $\Delta$-clean ring.
	
(2) \( R/J(R) \cong \mathbb{Z}_2 \).
\end{corollary}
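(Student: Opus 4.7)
The plan is to derive this corollary directly from Corollary~\ref{11}, exploiting the fact that commutativity eliminates the ``bleached'' hypothesis. Since $R$ is a commutative local ring, it is local (so Corollary~\ref{11} applies), and I first need to observe that every commutative local ring is automatically bleached. Indeed, for $a \in U(R)$ and $b \in J(R)$, the map $l_a - r_b$ sends $r \mapsto ar - rb = (a-b)r$ by commutativity; since $a \in U(R)$ and $b \in J(R)$, the element $a - b$ is a unit in $R$, so $l_a - r_b$ is a bijection (in particular surjective). The same argument applies to $l_b - r_a$. This is precisely the instance of \cite[Example 13]{bdd} cited before Corollary~\ref{11}.

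For the implication $(1) \Rightarrow (2)$, I would invoke Corollary~\ref{11} to conclude that $R/J(R) \cong \mathbb{F}_2 \cong \mathbb{Z}_2$. Conversely, for $(2) \Rightarrow (1)$, since $R$ is commutative local it is bleached by the previous paragraph, and $R/J(R) \cong \mathbb{Z}_2 \cong \mathbb{F}_2$, so again Corollary~\ref{11} yields that $T_n(R)$ is strongly $\Delta$-clean.

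There is no real obstacle here; the result is essentially a specialization of Corollary~\ref{11} where the ``bleached'' condition trivializes. The only point that deserves being recorded explicitly is the verification that commutative local rings are bleached, and that is a one-line computation using $(a-b) \in U(R)$ whenever $a \in U(R)$, $b \in J(R)$. Accordingly, the proof reduces to citing Corollary~\ref{11} together with this observation.
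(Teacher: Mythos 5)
Your proof is correct and follows essentially the same route as the paper, which simply observes that every commutative local ring is bleached and then invokes Corollary~\ref{11}. Your explicit one-line verification that $l_a - r_b$ is multiplication by the unit $a-b$ is a welcome addition, since the paper leaves this to the cited reference.
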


\begin{proof}
This is pretty obvious given the fact that every commutative local ring is bleached.
\end{proof}

If we want to see a necessary and sufficient condition for a trivial Morita context to be strongly $\Delta$-clean, we can observe it in the following result. Since the proof is similar to Theorem \ref{loc mori}, we omit the proof.

\begin{theorem}\label{mor fin}
Let $R = \begin{pmatrix}
A & M \\
N & B
\end{pmatrix}$ be a trivial Morita context. Then, $R$ is a strongly $\Delta$-clean ring if, and only if, for every $a,b \in A,B$ and $m,n \in M,N$, there are $e,f \in \text{Id}(A),\text{Id}(B)$ and $x,y \in M,N$ such that
\begin{align*}
em - mf = ax - xb,\quad  ex + xf = x,\quad a - e \in \Delta(A) \:\:\text{ and } \:\: ae = ea, \\
ne - fn = ya - by,\quad  fy + ye = y, \: \:\quad b - f \in \Delta(B)\:\: \text{ and }\:\: bf = fb.
\end{align*}
\end{theorem}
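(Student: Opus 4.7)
The plan is to follow the template used in Theorem \ref{loc mori}, but without the locality hypothesis, doing the $2\times 2$ matrix bookkeeping once and for all. First I would determine the idempotents of $R$. Since $MN = 0 = NM$ in a trivial Morita context, a matrix $E = \begin{pmatrix} e & x \\ y & f \end{pmatrix}$ squares to $\begin{pmatrix} e^2 & ex+xf \\ ye+fy & f^2 \end{pmatrix}$, so $E \in \mathrm{Id}(R)$ if and only if $e \in \mathrm{Id}(A)$, $f \in \mathrm{Id}(B)$, $ex+xf = x$, and $ye+fy = y$. By Lemma \ref{rad morita}(3), a matrix $D = \begin{pmatrix} a-e & m-x \\ n-y & b-f \end{pmatrix}$ lies in $\Delta(R)$ precisely when $a-e \in \Delta(A)$ and $b-f \in \Delta(B)$.

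Next I would translate the commutation condition $ED = DE$ into scalar identities. Exploiting $xy' = 0 = y'x$ for $x \in M$ and $y' \in N$ (again a consequence of the trivial context), the $(1,1)$ and $(2,2)$ entries of $ED$ and $DE$ reduce to $e(a-e) = (a-e)e$ and $f(b-f) = (b-f)f$, equivalent to $ae = ea$ and $bf = fb$. The off-diagonal entries simplify to $em - ex + xb - xf = ax - ex + mf - xf$ and $ya - ye + fn - fy = by - fy + ne - ye$, which after cancellation are exactly the conditions $em - mf = ax - xb$ and $ne - fn = ya - by$.

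With these computations settled, both directions are immediate. For the forward implication, given a strongly $\Delta$-clean representation $\begin{pmatrix} a & m \\ n & b \end{pmatrix} = E + D$ with $E \in \mathrm{Id}(R)$, $D \in \Delta(R)$, and $ED = DE$, the entries $e, f, x, y$ of $E$ deliver precisely the data and relations listed in the theorem. For the converse, the assumed data assemble into an idempotent $E$ and a $\Delta$-element $D$ whose sum is $\begin{pmatrix} a & m \\ n & b \end{pmatrix}$, and the two off-diagonal identities together with the commutation of $e$ with $a$ and $f$ with $b$ are exactly what is needed to force $ED = DE$.

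The only real obstacle is the sheer volume of matrix algebra: eight entries across two products, with the crucial simplification coming from $MN = 0 = NM$. Since no subtle ring-theoretic input is required beyond Lemma \ref{rad morita} and the structure of idempotents computed in the first paragraph, the proof is essentially a careful bookkeeping exercise, which explains why the authors elect to omit it.
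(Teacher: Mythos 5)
Your proof is correct and is exactly the entry-wise verification the paper has in mind (the paper omits the proof, saying only that it is similar to Theorem \ref{loc mori}). The identification of $\mathrm{Id}(R)$, the appeal to Lemma \ref{rad morita}(3) for $\Delta(R)$ (applicable since $MN=0\subseteq J(A)$ and $NM=0\subseteq J(B)$), and the reduction of the four entries of $ED=DE$ to the stated scalar identities using $MN=NM=0$ all check out.
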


With the above theorem in mind, we could have deduced Theorem~\ref{loc mori}, but since local rings have their own unique beauty, we refrained from doing so.

The following results can elementarily be derived from Theorem~\ref{mor fin}.

\begin{theorem}
Let $R = \begin{pmatrix}
A & M \\
N & B
\end{pmatrix}$ be a trivial Morita context with $A$ and $B$ abelian. Then, $R$ is a strongly $\Delta$-clean ring if, and only if, $A$ and $B$ are uniquely $\Delta$-clean and, for each $a,b \in A,B$ and $m,n \in M,N$, where $a = e + d$, $b = f + d'$ are the unique $\Delta$-clean decompositions in $A$ and $B$, respectively, there are $x,y \in M,N$ such that
\begin{align*}
&em - mf = ax - xb, \quad
ex + xf = x, \\
&ne - fn = ya - by, \quad\;\;
fy + ye = y.
\end{align*}
\end{theorem}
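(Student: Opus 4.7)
The plan is to deduce this theorem from Theorem~\ref{mor fin} by exploiting the strong rigidity that comes from the abelian hypothesis, which through Theorem~\ref{cor 3} ties abelian strongly $\Delta$-clean rings to uniquely $\Delta$-clean rings. The role of ``abelian'' is two-fold: it gives uniqueness of the $\Delta$-clean decomposition in each corner, and it automatically supplies the commutation conditions $ae=ea$ and $bf=fb$ that appear in Theorem~\ref{mor fin}.

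For the forward direction, suppose $R$ is strongly $\Delta$-clean. Setting $e_0 = \left(\begin{smallmatrix}1&0\\0&0\end{smallmatrix}\right)$, Corollary~\ref{corner ring} applied to the corners $e_0Re_0\cong A$ and $(1-e_0)R(1-e_0)\cong B$ shows that $A$ and $B$ are strongly $\Delta$-clean. Since they are also abelian by assumption, Theorem~\ref{cor 3} (or equivalently Lemma~\ref{uniquely}) upgrades this to the statement that $A$ and $B$ are uniquely $\Delta$-clean. Now fix $a\in A$, $b\in B$, $m\in M$, $n\in N$, and let $a=e+d$, $b=f+d'$ be the unique $\Delta$-clean decompositions. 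Applying Theorem~\ref{mor fin} to the element $\left(\begin{smallmatrix}a&m\\n&b\end{smallmatrix}\right)$ produces some idempotents $e^\ast\in\mathrm{Id}(A)$, $f^\ast\in\mathrm{Id}(B)$ and elements $x\in M$, $y\in N$ satisfying the four displayed identities of that theorem with $e,f$ replaced by $e^\ast,f^\ast$. The crucial step is that the conditions $a-e^\ast\in\Delta(A)$, $ae^\ast=e^\ast a$, together with uniqueness of the $\Delta$-clean decomposition in $A$, force $e^\ast=e$; the analogous argument forces $f^\ast=f$. Hence the very $x,y$ supplied by Theorem~\ref{mor fin} satisfy the required relations against the unique idempotents $e,f$.

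For the converse, assume $A,B$ are uniquely $\Delta$-clean and that for every $a,b,m,n$ one can exhibit $x,y$ satisfying the four displayed identities with respect to the unique decompositions $a=e+d$, $b=f+d'$. By Theorem~\ref{cor 3}, both $A$ and $B$ are then strongly $\Delta$-clean with central idempotents, so in particular $ae=ea$ and $bf=fb$ hold automatically, and $a-e\in\Delta(A)$, $b-f\in\Delta(B)$ by construction. Thus all hypotheses of Theorem~\ref{mor fin} are met for an arbitrary element $\left(\begin{smallmatrix}a&m\\n&b\end{smallmatrix}\right)$ of $R$, and the conclusion that $R$ is strongly $\Delta$-clean follows immediately.

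The main obstacle in this plan is the matching-of-idempotents step in the forward direction: Theorem~\ref{mor fin} only guarantees existence of \emph{some} idempotents $e^\ast,f^\ast$, while the theorem under consideration insists on the specific ones coming from the unique $\Delta$-clean decomposition. This is precisely where the abelian hypothesis is indispensable, since without central idempotents the uniqueness supplied by Theorem~\ref{cor 3} is unavailable, and one would be unable to identify $e^\ast$ with $e$ and $f^\ast$ with $f$. Once this identification is made, the rest of the argument is purely bookkeeping with the identities already packaged in Theorem~\ref{mor fin}.
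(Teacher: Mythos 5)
Your proposal is correct and follows exactly the route the paper intends: the paper gives no explicit proof, merely noting that the result "can elementarily be derived from Theorem~\ref{mor fin}", and your argument is precisely that derivation, with the abelian hypothesis feeding Theorem~\ref{cor 3} to identify the idempotents from Theorem~\ref{mor fin} with the unique ones. The only trivial quibble is that the matching step rests on Theorem~\ref{cor 3} itself rather than on Lemma~\ref{uniquely}, which goes in the opposite direction.
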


Let $R$ be a ring and $M$ a bi-module over $R$. The {\it trivial extension} of $R$ and $M$ is defined as
\[ T(R, M) = \{(r, m) : r \in R \text{ and } m \in M\}, \]
with addition defined componentwise and multiplication defined by
\[ (r, m)(s, n) = (rs, rn + ms). \]
One observes that the trivial extension $T(R, M)$ is isomorphic to the subring
\[ \left\{ \begin{pmatrix} r & m \\ 0 & r \end{pmatrix} : r \in R \text{ and } m \in M \right\} \]
of the formal $2 \times 2$ matrix ring $\begin{pmatrix} R & M \\ 0 & R \end{pmatrix}$, and likewise $T(R, R) \cong R[x]/\left\langle x^2 \right\rangle$. We also note that the set of units of the trivial extension $T(R, M)$ is exactly \[ U(T(R, M)) = T(U(R), M). \]
Moreover, knowing \cite {kkqt}, we write
\[ \Delta(T(R, M)) = T(\Delta(R), M). \]

\begin{corollary}
Suppose $R$ is a ring that has no non-trivial idempotent elements, and $M$ is a bi-modulo over $R$. Then, the following two conditions are equivalent:

(1) $T(R, M)$ is a strongly $\Delta$-clean ring.

(2) \( R/J(R) \cong \mathbb{Z}_2 \).
\end{corollary}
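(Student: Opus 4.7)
The plan is to shuttle between $T(R,M)$ and $R$ using $R$ as a quotient of $T(R,M)$ modulo a piece of its Jacobson radical, and then to let the idempotent hypothesis pin everything down. Throughout I will use the identities $U(T(R,M)) = T(U(R), M)$ and $\Delta(T(R,M)) = T(\Delta(R), M)$ recorded in the excerpt, together with the classical fact (immediate from the first) that $J(T(R,M)) = T(J(R), M)$.

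For $(1)\Rightarrow(2)$, I first verify that $T(0,M) = \{(0,m) : m \in M\}$ is a two-sided ideal of $T(R,M)$ and sits inside $J(T(R,M)) = T(J(R), M)$. The projection $(r,m) \mapsto r$ then yields $T(R,M)/T(0,M) \cong R$, so Lemma \ref{lemma 0}(2) transports strong $\Delta$-cleanness down to $R$. Since $R$ has no non-trivial idempotents by hypothesis, Lemma \ref{ni idempotent} forces $R$ to be local. Hence $R/J(R)$ is a division ring, while Corollary \ref{booli} forces it to be Boolean; the only Boolean division ring is $\mathbb{Z}_2$.

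For $(2)\Rightarrow(1)$, the hypothesis $R/J(R) \cong \mathbb{Z}_2$ being a field forces $R$ to be local (so units lift and every element of $R$ is either a unit or lies in $J(R)$); in particular, for each $r \in R$ either $r \in J(R)$ or $r - 1 \in J(R)$. Given any $(r,m) \in T(R,M)$, I will write either $(r,m) = (0,0) + (r,m)$ or $(r,m) = (1,0) + (r-1, m)$. Because $J(R) \subseteq \Delta(R)$, in both cases the non-idempotent summand lies in $T(\Delta(R), M) = \Delta(T(R,M))$; the idempotent summand is either zero or the identity $(1,0)$, each trivially commuting with the other summand. This yields the desired strongly $\Delta$-clean representation.

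I expect no serious obstacle here: the argument is largely bookkeeping with the known descriptions of $U$, $J$, and $\Delta$ for a trivial extension. The only item that deserves explicit emphasis is the forward-direction chain showing that $R/J(R)$ is simultaneously a Boolean ring (via Corollary \ref{booli}) and a division ring (via Lemma \ref{ni idempotent} applied to the no-non-trivial-idempotent hypothesis on $R$), which together force $R/J(R) \cong \mathbb{Z}_2$.
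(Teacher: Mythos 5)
Your proof is correct, and it is essentially the derivation the paper intends (the corollary is stated without proof, immediately after the displayed identities $U(T(R,M))=T(U(R),M)$ and $\Delta(T(R,M))=T(\Delta(R),M)$, which are exactly the tools you use). Both directions check out: the quotient by the square-zero ideal $T(0,M)\subseteq J(T(R,M))$ combined with Lemma \ref{lemma 0}(2), Lemma \ref{ni idempotent} and Corollary \ref{booli} gives $(1)\Rightarrow(2)$, and for $(2)\Rightarrow(1)$ the decomposition $(r,m)=(0,0)+(r,m)$ or $(r,m)=(1,0)+(r-1,m)$ with the central idempotents $0$ and $1$ lands the second summand in $T(J(R),M)\subseteq\Delta(T(R,M))$ as required.
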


\begin{corollary}
If the trivial extension $T(R,M)$ is a strongly $\Delta$-clean ring, then $R$ is a strongly $\Delta$-clean ring. The converse holds, provided $em=me$ for all $m\in M$ and $e\in Id(R)$.
\end{corollary}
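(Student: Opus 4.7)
The plan is to handle the two implications separately, in both cases relying on the identification $\Delta(T(R,M)) = T(\Delta(R),M)$ recorded immediately above the statement.

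\emph{Forward direction.} I would focus on the subset $I = \{(0,m) : m \in M\}$ of $T(R,M)$. A routine check shows $I$ is a two-sided ideal, and the trivial multiplication $(0,m)(0,m') = (0,0)$ forces $I^2 = 0$; in particular $I$ is nil, so $I \subseteq J(T(R,M))$. Since $T(R,M)/I \cong R$ in the obvious way, Lemma \ref{lemma 0}(2) applied to the strongly $\Delta$-clean ring $T(R,M)$ hands us that $R$ is strongly $\Delta$-clean without further effort.

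\emph{Converse direction.} Assume now that $R$ is strongly $\Delta$-clean and that $em = me$ for every $e \in Id(R)$ and $m \in M$. Given an arbitrary $(r,m) \in T(R,M)$, I would fix a strongly $\Delta$-clean representation $r = e + d$ in $R$ (so $e \in Id(R)$, $d \in \Delta(R)$, and $ed = de$) and propose the candidate decomposition
\[
(r,m) \;=\; (e,0) \;+\; (d,m).
\]
Three verifications then remain: (a) $(e,0)$ is idempotent, because $(e,0)^2 = (e^2, 0) = (e,0)$; (b) $(d,m)$ lies in $\Delta(T(R,M))$, which is immediate from the formula $\Delta(T(R,M)) = T(\Delta(R),M)$ quoted in the excerpt together with $d \in \Delta(R)$; (c) the two summands commute, since
\[
(e,0)(d,m) = (ed,\, em) \qquad \text{and} \qquad (d,m)(e,0) = (de,\, me),
\]
and these are equal thanks to $ed = de$ (supplied by the $R$-decomposition) and $em = me$ (supplied by the centrality hypothesis). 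This exhibits an S$\Delta$C representation of $(r,m)$ in $T(R,M)$.

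There is no real obstacle here; the only conceptual point is to notice that the hypothesis $em = me$ is tailored precisely to enforce the off-diagonal commutation in step (c), while the on-diagonal commutation is inherited from the $R$-decomposition. Everything else is bookkeeping on top of the structural formulas $U(T(R,M)) = T(U(R),M)$ and $\Delta(T(R,M)) = T(\Delta(R),M)$ that are already at our disposal.
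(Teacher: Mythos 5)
Your proof is correct, and since the paper states this corollary without any proof (it appears as an unproved consequence of the structural formulas $U(T(R,M))=T(U(R),M)$ and $\Delta(T(R,M))=T(\Delta(R),M)$ quoted just above it), your direct verification is exactly the intended argument. Both halves check out: the forward direction via the square-zero ideal $0\oplus M\subseteq J(T(R,M))$ and Lemma \ref{lemma 0}(2), and the converse via the decomposition $(r,m)=(e,0)+(d,m)$, where the hypothesis $em=me$ is used precisely where you say it is.
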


\begin{corollary}
If the trivial extension $T(R,R)$ is a strongly $\Delta$-clean ring, then $R$ is a strongly $\Delta$-clean ring. The converse holds when $R$ is abelian.
\end{corollary}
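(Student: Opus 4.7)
The plan is to deduce both implications directly from the immediately preceding corollary by specialising the bimodule $M$ to be $R$ itself, with left and right $R$-actions given by ring multiplication. Because the excerpt already records the ingredients $U(T(R,M)) = T(U(R),M)$ and $\Delta(T(R,M)) = T(\Delta(R),M)$, no new computations involving units or $\Delta$ of a trivial extension are needed.

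For the forward implication, I would simply invoke the first half of the preceding corollary with $M := R$: assuming $T(R,R)$ is strongly $\Delta$-clean, that corollary yields at once that the base ring $R$ is strongly $\Delta$-clean. (If one preferred a self-contained argument, one could instead project $T(R,R) \to R$ via $(r,m) \mapsto r$, whose kernel is contained in the Jacobson radical of $T(R,R)$, and then apply Lemma~\ref{lemma 0}(2); but appealing to the previous corollary is shorter.)

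For the converse, assume that $R$ is abelian and strongly $\Delta$-clean. Abelianness says exactly that every idempotent $e \in \mathrm{Id}(R)$ is central in $R$, so in the bimodule $M = R$ we have $em = me$ for all $m \in M$ and all $e \in \mathrm{Id}(R)$. This is precisely the extra hypothesis required by the second half of the preceding corollary, so applying that half with $M := R$ delivers the conclusion that $T(R,R)$ is strongly $\Delta$-clean.

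In short, the result is essentially a specialisation of the more general statement about $T(R,M)$. The only conceptual point worth checking carefully, and hence the one ``obstacle'', is verifying that the bimodule structure on $M = R$ used to build $T(R,R)$ is literal ring multiplication on both sides, so that the centrality of idempotents in $R$ translates cleanly into the commutation condition $em = me$ required for the bimodule. Everything else is a direct invocation.
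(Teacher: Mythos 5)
Your proposal is correct and matches the paper's intended argument: the corollary is stated immediately after the $T(R,M)$ version precisely so that one specialises $M:=R$ with the regular bimodule structure, abelianness of $R$ supplying the hypothesis $em=me$ for all $m\in M$ and $e\in Id(R)$. Nothing further is needed.
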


We are now prepared to prove the following.

\begin{lemma}\label{pur mori}
Let $R$ be the ring of a Morita context $\begin{pmatrix}
A & M\\
N & B
\end{pmatrix}$.
If $R$ is strongly $\Delta$-clean, then $A$ and $B$ are strongly $\Delta$-clean, and $MN,NM \subseteq J(A),J(B)$.
\end{lemma}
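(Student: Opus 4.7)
The plan is to decouple the two conclusions. The strong $\Delta$-cleanness of $A$ and $B$ will fall out of a standard corner-ring reduction, while the trace-ideal containments $MN\subseteq J(A)$ and $NM\subseteq J(B)$ will be pulled out of the fact that $R/J(R)$ is boolean (hence commutative).

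For the first assertion, I would set $e:=\begin{pmatrix}1_{A}&0\\ 0&0\end{pmatrix}$, which is an idempotent of $R$. The usual Morita identifications give $eRe\cong A$ and $(1_R-e)R(1_R-e)\cong B$. Since $R$ is strongly $\Delta$-clean by hypothesis, Corollary~\ref{corner ring} applied to each of these corners gives that both $A$ and $B$ are strongly $\Delta$-clean.

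For the trace-ideal statement, the main input is Corollary~\ref{booli}, which makes $R/J(R)$ boolean and, in particular, commutative. Hence $[r,s]=rs-sr\in J(R)$ for all $r,s\in R$. Given $m\in M$ and $n\in N$, I would apply this to $e$ and $s:=\begin{pmatrix}0&m\\ n&0\end{pmatrix}$ to obtain
\[
es-se=\begin{pmatrix}0&m\\ -n&0\end{pmatrix}\in J(R).
\]
Since $J(R)$ is an ideal, squaring yields
\[
(es-se)^{2}=\begin{pmatrix}-mn&0\\ 0&-nm\end{pmatrix}\in J(R).
\]

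To finish, I would invoke the well-known corner identity $J(R)\cap eRe=eJ(R)e=J(eRe)$, valid for any idempotent $e$. Compressing the block-diagonal element above by $e$ on both sides places $-mn$ in $J(eRe)\cong J(A)$, and compressing by $1_R-e$ places $-nm$ in $J((1_R-e)R(1_R-e))\cong J(B)$. Since $m$ and $n$ were arbitrary, this gives $MN\subseteq J(A)$ and $NM\subseteq J(B)$. I do not expect any real obstacle here: the only point that requires care is keeping track of the Morita identifications $eRe\cong A$ and $(1_R-e)R(1_R-e)\cong B$ and checking that the Jacobson radicals transport correctly under them, which is routine. All the structural content is packed into Corollary~\ref{booli}, which converts a hypothesis about $\Delta(R)$ into the much sharper statement that $R/J(R)$ is boolean.
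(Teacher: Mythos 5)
Your proposal is correct, and its overall architecture matches the paper's: the corner reduction via $e=\begin{pmatrix}1&0\\0&0\end{pmatrix}$ for the strong $\Delta$-cleanness of $A$ and $B$ is exactly what the paper does (it phrases this as ``by analogy with Theorem~\ref{com morita}''), and both proofs ultimately place the diagonal matrix with entries $mn$ and $nm$ into $J(R)$ by squaring an off-diagonal element of $J(R)$. Where you diverge is in the two supporting steps. To get the off-diagonal matrix into $J(R)$, the paper uses Lemma~\ref{lemma 4}: since $R/J(R)$ is reduced, $\mathrm{Nil}(R)\subseteq J(R)$, and $\begin{pmatrix}0&m\\0&0\end{pmatrix}$, $\begin{pmatrix}0&0\\n&0\end{pmatrix}$ are square-zero, so their sum $\begin{pmatrix}0&m\\n&0\end{pmatrix}$ lies in $J(R)$; you instead realize the sign-variant $\begin{pmatrix}0&m\\-n&0\end{pmatrix}$ as the commutator $es-se$ and invoke Corollary~\ref{booli} (commutativity of $R/J(R)$), which is a strictly stronger input than reducedness but equally available. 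To extract the diagonal entries, the paper cites Sands' theorem on radicals of Morita contexts \cite[Theorem 1]{rmor}, whereas you use the elementary corner identity $J(eRe)=eJ(R)e$, which makes your version more self-contained. Both routes are sound; yours trades an external reference for a slightly heavier structural hypothesis, and either could be substituted for the other without loss.
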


\begin{proof}
By analogy with Theorem \ref{com morita}, we can show that $A$ and $B$ are strongly $\Delta$-clean rings. It follows from Lemma~\ref{lemma 4} that $\text{Nil}(R) \subseteq J(R)$ and, therefore,
\[
\begin{pmatrix}
0 & M \\
0 & 0
\end{pmatrix},
\begin{pmatrix}
0 & 0 \\
N & 0
\end{pmatrix} \in J(R)
\]
which insures that
\[
\begin{pmatrix}
0 & M \\
N & 0
\end{pmatrix} \in J(R).
\]
Let $n \in N$ and $m \in M$ be arbitrary elements. Then, we have
\[
\begin{pmatrix}
mn & 0 \\
0 & nm
\end{pmatrix} =
\begin{pmatrix}
0 & m \\
n & 0
\end{pmatrix}
\begin{pmatrix}
0 & m \\
n & 0
\end{pmatrix} \in J(R),
\]
and, consequently,
\[
\begin{pmatrix}
MN & M \\
N & NM
\end{pmatrix} \in J(R).
\]
Thus, from \cite[Theorem 1]{rmor}, we conclude that $NM \subseteq J(A)$ and $MN \subseteq J(B)$, as desired.
\end{proof}

We are now attacking the following equivalencies.

\begin{theorem}\label{eq mori}
Let $R$ be the ring of a Morita context $\begin{pmatrix}
A & M\\
N & B
\end{pmatrix}$. If $J(A)$ and $J(B)$ are nilpotent ideals of $A$ and $B$, respectively, then
the following two assertions are equivalent:
\begin{enumerate}
    \item[(1)] $R$ is strongly $\Delta$-clean.
    \item[(2)] $A$ and $B$ are strongly $\Delta$-clean and $MN, NM \subseteq J(A), J(B)$.
\end{enumerate}
\end{theorem}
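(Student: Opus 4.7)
The implication (1) $\Rightarrow$ (2) is already contained in Lemma \ref{pur mori}, so the substance of the theorem lies in proving (2) $\Rightarrow$ (1). My plan is to reduce the problem to the characterization of strongly nil-clean rings recalled just before Theorem \ref{nil}, and then to invoke Theorem \ref{nil} itself.

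Assuming (2), I will first apply Lemma \ref{rad morita} (which is available because $MN \subseteq J(A)$ and $NM \subseteq J(B)$) to obtain
\[
J(R) = \begin{pmatrix} J(A) & M \\ N & J(B) \end{pmatrix}, \qquad R/J(R) \cong A/J(A) \times B/J(B).
\]
Since $A$ and $B$ are strongly $\Delta$-clean, Corollary \ref{booli} will give me that $A/J(A)$ and $B/J(B)$ are both boolean, so $R/J(R)$ is a product of booleans, hence boolean. My next task is then to show $J(R)$ is nil; once this is done, the characterization ``$R$ strongly nil-clean $\Leftrightarrow$ $R/J(R)$ boolean and $J(R)$ nil'' quoted before Theorem \ref{nil} will force $R$ to be strongly nil-clean, and Theorem \ref{nil} will promote this to strong $\Delta$-cleanness.

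To establish nilpotency of $J(R)$, I would fix integers $p, q \geq 1$ with $J(A)^{p} = 0 = J(B)^{q}$ and analyze the products making up $J(R)^{n}$ combinatorially. Each entry of $J(R)^{n}$ is a sum of $n$-fold products whose factor positions trace a walk on the two-node graph with $J(A)$-loops at node $1$, $J(B)$-loops at node $2$, and edges $M \colon 1 \to 2$ and $N \colon 2 \to 1$. Because $J(B) \cdot N \subseteq N$ and $M \cdot N \subseteq J(A)$, every excursion of shape $M \cdot J(B)^{k} \cdot N$ starting and ending at node $1$ yields a product inside $J(A)$. A closed walk at node $1$ of length $n$ therefore produces a product in $J(A)^{t}$, where $t$ is the sum of the $J(A)$-stay lengths plus the number of excursions; a mirror statement holds at node $2$. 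A pigeonhole count (with an explicit bound in $p$ and $q$) will then show that for $n$ large enough, either some maximal $J(A)$- or $J(B)$-run already reaches its nilpotency index, or else $t$ exceeds $p$ (respectively $q$), killing the product. Hence $J(R)^{n} = 0$ for all sufficiently large $n$.

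The main obstacle is precisely this nilpotency step: the $M$- and $N$-factors only collapse into the diagonal after pairing across an excursion, so I must simultaneously control the run lengths inside each node and the number of transitions between nodes. Everything else in the proof is a direct invocation of previously established results, so once the combinatorial bookkeeping closes, Theorem \ref{nil} delivers (1).
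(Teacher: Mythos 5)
Your proposal is correct, and its overall skeleton coincides with the paper's: both directions funnel through Lemma \ref{pur mori} for (1)$\Rightarrow$(2), and for (2)$\Rightarrow$(1) both reduce the problem to showing that $R$ is strongly nil-clean and then invoke Theorem \ref{nil}. Where you genuinely diverge is in \emph{how} strong nil-cleanness of $R$ is obtained. The paper does it by citation: it first upgrades $A$ and $B$ to strongly nil-clean rings (Corollary \ref{booli} together with \cite[Theorem 2.7]{kwz}), notes that $MN$ and $NM$ are nilpotent because they sit inside the nilpotent radicals, and then applies \cite[Theorem 3.4]{kwz}, which handles Morita contexts of strongly nil-clean rings directly. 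You instead verify the characterization ``$R/J(R)$ boolean and $J(R)$ nil'' by hand: Lemma \ref{rad morita} gives $R/J(R)\cong A/J(A)\times B/J(B)$, which is boolean by Corollary \ref{booli}, and the nilpotency of $J(R)=\left(\begin{smallmatrix} J(A) & M \\ N & J(B)\end{smallmatrix}\right)$ is established by your walk-counting argument. That argument does close: in a nonzero closed walk at node $1$ each excursion $M\,J(B)^{k}N$ lands in $MN\subseteq J(A)$, so the diagonal stays plus the excursion count must remain below $p$, each $J(B)$-run must remain below $q$, and one gets an explicit bound such as $n\le (p-1)(q+2)$ for closed walks at node $1$ (with the analogous bounds for the other three entry types), so $J(R)^{n}=0$ for large $n$. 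The trade-off is clear: your route is self-contained and avoids importing the Morita-context machinery of \cite{kwz} at the cost of the combinatorial bookkeeping, while the paper's route is three lines long but leans on two external theorems. Both are valid; if you write yours up, the nilpotency lemma should be stated and proved in full rather than sketched, since it is the only nontrivial content of the implication.
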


\begin{proof}
$(1) \Rightarrow (2)$. This is obvious following Lemma \ref{pur mori}.

$(1) \Rightarrow (2)$. Assume that $A$ and $B$ are strongly $\Delta$-clean rings. Since $J(A)$ and $J(B)$ are nil, a combination of Corollary \ref{booli} and \cite[Theorem 2.7]{kwz} ensures that $A$ and $B$ are strongly nil-clean rings. Moreover, since $MN \subseteq J(A)$ and $NM \subseteq J(B)$, and $J(A)$ and $J(B)$ are nilpotent, it follows that $MN$ and $NM$ are also nilpotent. Therefore, \cite[Theorem 3.4]{kwz} assures that $R$ is a strongly nil-clean ring. Consequently, Theorem \ref{nil} forces that $R$ is a strongly $\Delta$-clean ring, as expected.
\end{proof}

As a concrete construction, considering the trivial Morita context $R_1 =
\begin{pmatrix}
\mathbb{Z}_4 & 2\mathbb{Z}_4 \\
\mathbb{Z}_4 & \mathbb{Z}_4
\end{pmatrix}$,
$R_2 =
\begin{pmatrix}
\mathbb{Z}_4 & \mathbb{Z}_4 \\
2 \mathbb{Z}_4 & \mathbb{Z}_4
\end{pmatrix}$,
where the context products are the same as the product in $\mathbb{Z}_4$, we infer with the help of
Theorem~\ref{eq mori} that $R_1$ and $R_2$ are both strongly $\Delta$-clean rings.

\section{Group Rings}

As usual, for an arbitrary ring $R$ and an arbitrary group $G$, the double-letter $RG$ stands for the {\it group ring} of $G$ over $R$. Standardly, $\varepsilon(RG)$ denotes the kernel of the classical {\it augmentation map} $\varepsilon: RG\to R$ defined by $\varepsilon (\displaystyle\sum_{g\in G}a_{g}g)=\displaystyle\sum_{g\in G}a_{g}$, and this ideal is called the {\it augmentation ideal} of $RG$.

Besides, a group $G$ is called a {\it $p$-group} if every element of $G$ has finite order which is a power of the prime number $p$. Moreover, a group $G$ is said to be {\it locally finite} if every finitely generated subgroup is finite.

The following result is crucial for our presentation here.

\begin{lemma}\label{prop group ring}
Let $R$ be a strongly $\Delta$-clean ring and let $G$ be a locally finite 2-group. Then, the following three conditions hold:

(1) $\varepsilon(RG) \subseteq J(RG)$.

(2) $RG/J(RG)$ is Boolean.

(3) $J(RG) = \{x \in RG : \varepsilon(x) \in J(R)\}$.
\end{lemma}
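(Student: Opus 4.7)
The three parts of the lemma are closely linked; I will focus on proving (1), together with the auxiliary inclusion $J(R) \subseteq J(RG)$, and then derive (3) and (2) as consequences. Once these are in hand, (3) follows because surjectivity of $\varepsilon$ gives $J(RG) \subseteq \varepsilon^{-1}(J(R))$, while conversely if $\varepsilon(x) \in J(R)$ we decompose $x = (x - \varepsilon(x)\cdot 1_G) + \varepsilon(x)\cdot 1_G \in \varepsilon(RG) + J(R) \subseteq J(RG)$; and (2) is immediate since $\varepsilon$ then induces a ring isomorphism $RG/J(RG) \cong R/J(R)$, which is boolean by Corollary \ref{booli}.

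For (1), the first move is to reduce to finite 2-groups. Every $x \in \varepsilon(RG)$ has finite support and therefore lies in $RH$ for some finitely generated, hence finite (by local finiteness), 2-subgroup $H \le G$. To establish $x \in J(RG)$ against any test element $a \in RG$, I enlarge $H$ to contain the support of $a$; since units of $RH$ are automatically units of $RG$, it suffices to show $x \in J(RH)$. Thus the task reduces to proving $\varepsilon(RH) \subseteq J(RH)$ (and similarly $J(R) \subseteq J(RH)$) for every finite 2-group $H$.

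For finite $H$, project $RH \twoheadrightarrow BH$, where $B := R/J(R)$ is boolean of characteristic $2$ by Corollary \ref{booli} and Lemma \ref{lemma 5}. Since $B$ embeds as a subdirect product of copies of $\mathbb{F}_2$, $BH$ embeds into $\prod_i \mathbb{F}_2 H$ (as $H$ is finite). By classical modular representation theory, each $\varepsilon(\mathbb{F}_2 H)$ is nilpotent of index at most $|H|$ when $H$ is a finite $2$-group, so $\varepsilon(BH)^{|H|} = 0$. Lifting this to $RH$ gives $\varepsilon(RH)^{|H|} \subseteq J(R) \cdot RH$. Combined with the inclusion $J(R) \cdot RH \subseteq J(RH)$, we obtain $\varepsilon(RH)^{|H|} \subseteq J(RH)$; and since $RH/J(RH)$ is semiprimitive and therefore semiprime, the nilpotent-ideal image of $\varepsilon(RH)$ in the quotient must vanish, producing $\varepsilon(RH) \subseteq J(RH)$ as required. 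The companion inclusion $J(R) \subseteq J(R) \cdot RH \subseteq J(RH)$ comes for free.

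The principal technical obstacle is the classical inclusion $J(R) \cdot RH \subseteq J(RH)$ for finite $H$ in the (possibly) non-commutative setting. For commutative $R$ this is Nakayama applied to the module-finite extension $R \subseteq RH$; in our strongly $\Delta$-clean setting, one exploits that $R/J(R)$ is boolean and hence commutative (so $[R,R] \subseteq J(R)$) together with an induction on $|H|$ via a central element of order two and the identity $(z-1)^2 = -2(z-1)$, which reduces everything to $J(R)$-adic corrections that the boolean/nilpotent structure of $BH$ successively absorbs.
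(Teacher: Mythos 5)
Your overall architecture is sound and genuinely different from the paper's. The paper disposes of (1) in one line by combining Lemma \ref{lemma 5} ($2\in J(R)$) with a citation to \cite[Lemma 2]{zucc}, proves (2) via the decomposition $RG=J(RG)+R$ together with $J(RG)\cap R=J(R)$ from \cite[Proposition 9]{conel}, and only then deduces (3). You instead reconstruct (1) from scratch (reduction to finite $2$-subgroups, passage to $BH$ with $B=R/J(R)$ Boolean, nilpotency of $\varepsilon(\mathbb{F}_2H)$, lifting modulo $J(R)\cdot RH$, and killing the nilpotent image in the semiprime quotient $RH/J(RH)$), and then derive (3) and (2) cleanly from (1) plus $J(R)\subseteq J(RG)$. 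Those derivations of (2) and (3) are correct and arguably tidier than the paper's, since $J(RG)=\varepsilon^{-1}(J(R))$ immediately yields $RG/J(RG)\cong R/J(R)$.

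The genuine gap is exactly the step you flag as the ``principal technical obstacle,'' namely $J(R)\cdot RH\subseteq J(RH)$ for finite $H$: your final paragraph does not prove it. The sketch offered there (``induction on $|H|$ via a central element of order two and the identity $(z-1)^2=-2(z-1)$, with $J(R)$-adic corrections absorbed by the Boolean structure'') conflates this inclusion with the nilpotency of the augmentation ideal, which is a separate issue you have already handled; the identity $(z-1)^2=-2(z-1)$ says nothing about why $1-ax$ is invertible for $a\in RH$ and $x\in J(R)\cdot RH$. Moreover, the inclusion has nothing to do with $R$ being strongly $\Delta$-clean or $R/J(R)$ being Boolean: for \emph{any} ring $R$ and finite group $H$, every simple left $RH$-module $M$ is finitely generated over $R$ (as $RH$ is a finitely generated free $R$-module), so $J(R)M\neq M$ by Nakayama; since elements of $R$ commute with elements of $H$ inside $RH$, $J(R)M$ is an $RH$-submodule of $M$, hence $J(R)M=0$, whence $J(R)\subseteq\operatorname{ann}(M)$ for all simple $M$ and $J(R)\cdot RH\subseteq J(RH)$. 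This is precisely the content of \cite[Proposition 9]{conel}, which the paper itself invokes; replacing your last paragraph with that argument (or that citation) closes the gap and makes the rest of your proof go through.
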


\begin{proof}
(1) Applying Lemma \ref{lemma 5}, we discover $2 \in J(R)$. Thus, from \cite[Lemma 2]{zucc}, we obtain $\varepsilon(RG) \subseteq J(RG)$.

\medskip

(2) For any $\sum_{g \in G} a_gg \in RG$, we have
\[
\sum_{g \in G} a_gg = \sum_{g \in G} -a_g(1-g) + \sum_{g \in G} a_g \in \varepsilon(RG) + R.
\]
Therefore, from point (1), we get $RG = J(RG) + R$. Moreover, an application of \cite[Proposition 9]{conel} guarantees that $J(RG) \cap R = J(R)$, whence
\[
R/J(R) \cong R/(J(RG)\cap R) \cong R+J(RG)/J(RG) = RG/J(RG).
\]
However, Corollary~\ref{booli} teaches that $R/J(R)$ is Boolean, and thus $RG/J(RG)$ is also Boolean.

\medskip

(3) Set $A := \{x \in RG : \varepsilon(x) \in J(R)\}$. Since $\varepsilon$ is surjective, we detect that $\varepsilon(J(RG)) \subseteq J(R)$, which means $J(RG) \subseteq A$. Now, let $x = \sum_{g \in G} a_gg \in A$, so $\varepsilon(x) \in J(R)$. Moreover, by (2), $RG/J(RG)$ is Boolean, and hence, for any $g \in G$, we deduce $1-g \in J(RG)$. Therefore,
\[
x = \sum_{g \in G} a_g(g-1) + \sum_{g \in G} a_g = \sum_{g \in G} a_g(g-1) + \varepsilon(x) \in J(RG),
\]
showing that $A \subseteq J(RG)$. Notice that, since $G$ is a locally finite group, \cite[Proposition 9]{conel} is applicable to derive that $J(RG) \cap R = J(R)$, as required.
\end{proof}

We now come to our main result in this section.

\begin{theorem}\label{1.6}
If the group ring $RG$ is strongly $\Delta$-clean, then $R$ is a strongly $\Delta$-clean ring and $G$ is a 2-group.
\end{theorem}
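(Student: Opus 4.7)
The plan is to prove the two conclusions separately, with the torsion of $G$ being the main technical obstruction. Assume $R \neq 0$ (otherwise the statement is vacuous).

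\textbf{$R$ is strongly $\Delta$-clean.} I would exploit the retract structure $R \hookrightarrow RG \xrightarrow{\varepsilon} R$, where $\varepsilon$ is the augmentation and $\varepsilon|_R = \mathrm{id}_R$. Given $r \in R$, take a strongly $\Delta$-clean decomposition $r = e + d$ inside $RG$ and apply $\varepsilon$. Since $\varepsilon$ is a unital ring homomorphism, $\varepsilon(e) \in Id(R)$ and $\varepsilon(e)\varepsilon(d) = \varepsilon(d)\varepsilon(e)$. The only substantive check is $\varepsilon(d) \in \Delta(R)$: for any $u \in U(R) \subseteq U(RG)$, one has $1 - du \in U(RG)$, and applying $\varepsilon$ (which preserves units) gives $1 - \varepsilon(d)u \in U(R)$, as needed.

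\textbf{Finite-order elements of $G$ have $2$-power order.} Corollary~\ref{booli} says $RG/J(RG)$ is Boolean, so each $\bar g$ is a unit idempotent and hence $\bar g = \bar 1$; thus $g - 1 \in J(RG)$ for every $g \in G$. Suppose $g$ has finite odd order $n > 1$, and set $s := 1 + g + \cdots + g^{n-1}$, so that $(g - 1)s = 0$. Since $s - n = \sum_{k=1}^{n-1}(g^k - 1) \in J(RG)$ and since $R/J(R)$ is Boolean (hence of characteristic $2$), the odd integer $n$ is a unit in $R/J(R)$; as units lift modulo $J(R)$, we get $n \in U(R) \subseteq U(RG)$. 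Consequently $s = n + (s-n) \in U(RG)$, forcing $g = 1$, a contradiction. Thus any finite-order element of order $n = 2^k m$ with $m$ odd must satisfy $m = 1$ (else $g^{2^k}$ would have odd order $m > 1$), so every finite order is a power of $2$.

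\textbf{The main obstacle: every element of $G$ has finite order.} Suppose $g \in G$ has infinite order. Then $R\langle g \rangle \subseteq RG$ is a copy of the Laurent polynomial ring $R[x, x^{-1}]$. Choosing a transversal $\{x_i\}$ of right cosets of $\langle g \rangle$ in $G$ with $x_0 = 1$, we obtain a direct-sum decomposition $RG = \bigoplus_i R\langle g \rangle x_i$ as a left $R\langle g \rangle$-module; a routine coefficient-matching argument then shows that any element of $R\langle g \rangle$ that is a two-sided unit in $RG$ is already a unit in $R\langle g \rangle$. Applying Lemma~\ref{lemma 3} to $a = g$ gives $g - g^2 \in \Delta(RG)$, so $1 - g + g^2 \in U(RG)$; hence $1 - x + x^2$ must be a unit in $R[x, x^{-1}]$. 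Now quotient by $J(R)$ to obtain the Boolean ring $B := R/J(R)$, pick any maximal ideal of $B$ to produce a surjection $B \twoheadrightarrow \mathbb{F}_2$, and extend to $R[x, x^{-1}] \twoheadrightarrow \mathbb{F}_2[x, x^{-1}]$ sending $1 - x + x^2$ to $1 + x + x^2$. But $\mathbb{F}_2[x, x^{-1}]$ is a PID whose units are precisely the monomials $x^n$, while $1 + x + x^2$ has three nonzero terms, a contradiction that closes the proof.
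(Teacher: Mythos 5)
Your proposal is correct, and it takes a genuinely different route from the paper on both halves. For the claim that $R$ is strongly $\Delta$-clean, the paper first establishes $\varepsilon(RG)\subseteq J(RG)$ (via Lemma~\ref{prop group ring}) and then invokes Lemma~\ref{lemma 0}(2) for the quotient $RG/\varepsilon(RG)\cong R$; you instead push a decomposition forward along the retraction $R\hookrightarrow RG\xrightarrow{\varepsilon}R$, using that units of $R$ remain units of $RG$ to check $\varepsilon(d)\in\Delta(R)$. Your version is arguably cleaner, since it sidesteps the containment $\varepsilon(RG)\subseteq J(RG)$ entirely (and Lemma~\ref{prop group ring} is stated under the hypothesis that $G$ is already a locally finite $2$-group, which is part of what is being proved here). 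For the claim that $G$ is a $2$-group, the paper simply cites Connell's Proposition~15 to conclude that $G$ is a $p$-group with $p\in J(R)$ and then uses $2\in J(R)$ to force $p=2$; you replace this external reference by a self-contained argument: odd torsion is excluded because the norm element $1+g+\cdots+g^{n-1}$ is a unit (being congruent to the odd integer $n\in U(R)$ modulo $J(RG)$) yet is annihilated by $g-1$, and infinite order is excluded by restricting the unit $1-g+g^{2}$ (from Lemma~\ref{lemma 3}) to the Laurent subring $R\langle g\rangle\cong R[x,x^{-1}]$ and specializing to $\mathbb{F}_2[x,x^{-1}]$, whose only units are monomials. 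What the paper's approach buys is brevity; what yours buys is transparency and independence from Connell's structure theory, and in particular it makes explicit the genuinely nontrivial step that $G$ can have no elements of infinite order.
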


\begin{proof}
An exploitation of Lemma \ref{prop group ring} reaches us that $\varepsilon(RG) \subseteq J(RG)$. Therefore, combining Lemma \ref{lemma 0} and the fact that $RG/\varepsilon(RG) \cong R$, we conclude that $R$ is a strongly $\Delta$-clean ring.

On the other hand, thanking to \cite[Proposition 15]{conel}, we infer that $G$ is a $p$-group with $p \in J(R)$. However, Lemma \ref{lemma 0} enables us that $2 \in J(R)$, which yields $p=2$, because two different primes cannot simultaneously lie in $J(R)$.
\end{proof}

We now continue with the next technical claim.

\begin{lemma}
Let $R$ be a semi-abelian ring and let $G$ be a locally finite group. Then, the following two conditions are equivalent:

(1) $RG$ is a strongly $\Delta$-clean ring.

(2) $R$ is a strongly $\Delta$-clean ring and $G$ is a 2-group.
\end{lemma}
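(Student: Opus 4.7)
My plan is as follows. The forward direction $(1)\Rightarrow(2)$ is nothing more than Theorem~\ref{1.6}, so all the substantive work is in $(2)\Rightarrow(1)$: starting from $R$ semi-abelian and strongly $\Delta$-clean together with $G$ a locally finite $2$-group, I want to produce a strongly $\Delta$-clean decomposition of each $x = \sum_{g \in G} a_g g \in RG$. The idea is to push $x$ through the augmentation $\varepsilon\colon RG\to R$, decompose the image in $R$, and lift it back.

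Concretely, I would first assemble the Jacobson-radical data supplied by Lemma~\ref{prop group ring}: $\varepsilon(RG)\subseteq J(RG)$, the quotient $RG/J(RG)$ is Boolean, and $J(RG)=\{y\in RG:\varepsilon(y)\in J(R)\}$, whence $J(R)\subseteq J(RG)\subseteq \Delta(RG)$. Next, using that $R$ is strongly $\Delta$-clean, I decompose $\varepsilon(x)=e+d_{0}$ with $e^{2}=e\in R$, $d_{0}\in \Delta(R)$ and $ed_{0}=d_{0}e$, and observe that
\[
x-e \;=\; \sum_{g\in G} a_{g}(g-1)\;+\;d_{0} \;\in\; \varepsilon(RG)+\Delta(R)\;\subseteq\; J(RG)\;\subseteq\; \Delta(RG).
\]
Thus $x=e+(x-e)$ already has the shape of an idempotent of $RG$ plus an element of $\Delta(RG)$; only the commutation $ex=xe$ remains.

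The main obstacle is precisely this commutation step. Since $e\in R$ and scalars commute with every $g\in G$ inside $RG$, the relation $ex=xe$ reduces to $ea_{g}=a_{g}e$ for all $g$. Corollary~\ref{com idim} only delivers $ea_{g}-a_{g}e\in \Delta(R)$, not zero, so here the semi-abelian hypothesis has to do real work. I expect to show that under semi-abelianness together with strong $\Delta$-cleanness every idempotent of $R$ is actually central: for each idempotent $e$, one of $eR(1-e)$ or $(1-e)Re$ vanishes by the semi-abelian condition, while elements of the surviving corner are square-zero, hence already lie in $\Delta(R)$ by Corollary~\ref{subset nil}; combining this with Lemma~\ref{lemma 2} and the characterization of $\Delta(R)$ afforded by Theorem~\ref{1} in a strongly $\Delta$-clean ring, those corner elements are forced to vanish as well. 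Once this centrality of idempotents of $R$ is in place, $ea_{g}=a_{g}e$ for every $g$ is automatic, hence $ex=xe$, and $x=e+(x-e)$ is the required strongly $\Delta$-clean representation, completing the converse.
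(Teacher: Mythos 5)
There is a genuine gap in your converse direction, and it is located exactly where you place the ``real work'': the sub-claim that a semi-abelian strongly $\Delta$-clean ring has all idempotents central is false. Take $R=T_2(\mathbb{F}_2)$. It is strongly $\Delta$-clean by Corollary~\ref{triangular}, and it is semi-abelian (each idempotent is left or right semicentral: e.g.\ for $e=E_{11}$ one has $(1-e)Re=0$), yet $E_{11}$ is not central. Your proposed mechanism for forcing the surviving corner to vanish does not work: elements of $eR(1-e)$ are indeed square-zero and hence lie in $\Delta(R)$ by Corollary~\ref{subset nil}, but neither Lemma~\ref{lemma 2} nor the description $\Delta(R)=\{x:1-x\in U(R)\}$ from Theorem~\ref{1} forces a nilpotent element of $\Delta(R)$ to be zero --- in $T_2(\mathbb{F}_2)$ the element $E_{12}$ is a nonzero square-zero element of $\Delta(R)$. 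Without centrality, your candidate decomposition $x=e+(x-e)$ with $e\in Id(R)$ lifted through the augmentation fails at the commutation step, since $ex=xe$ in $RG$ is equivalent to $ea_g=a_ge$ for all $g$, and there is no reason the coefficients $a_g$ commute with $e$. (Your augmentation-lifting scheme is essentially what the paper uses for the \emph{abelian} version of this lemma, where centrality is a hypothesis rather than something to be derived.)

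The paper's actual route for $(2)\Rightarrow(1)$ in the semi-abelian case is entirely different and avoids constructing an explicit decomposition: from Corollary~\ref{booli} the quotient $R/J(R)$ is Boolean, and for every prime ideal $P$ the ring $R/P$ is local by Chen's theorem on semi-abelian $\pi$-regular rings, whence $R/P\cong\mathbb{F}_2$; a theorem of Chin and Chen then gives that $RG$ is strongly $\pi$-regular, hence strongly clean, and since $RG/J(RG)\cong R/J(R)$ is Boolean one gets $\Delta(RG)=\{x:1-x\in U(RG)\}$, so Theorem~\ref{1} applies. To repair your argument you would either have to follow this $\pi$-regularity route or find, for each $x\in RG$, an idempotent genuinely in $RG$ (not merely lifted from $R$) commuting with $x$; the one supplied by $\varepsilon(x)$ is not enough.
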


\begin{proof}
$(1) \Rightarrow (2)$. This follows at once from Theorem \ref{1.6}.

$(2) \Rightarrow (1)$. Assume $R$ is a strongly $\Delta$-clean ring. Then, for every prime ideal $P$, we find that $R/P \cong (R/J(R))/(P/J(R))$. But, since $R/J(R)$ is Boolean, $R/P$ is also Boolean. Moreover, exploiting \cite[Theorem 3.3]{wchen}, the quotient $R/P$ is local. Thus, as $R/P$ is Boolean, it must be that $J(R/P)=(\overline{0})$, which gives $R/P \cong \mathbb{F}_2$. Therefore, invoking \cite[Theorem 3.3]{chin}, $RG$ is a strongly $\pi$-regular ring.

Furthermore, since $RG/J(RG) \cong R/J(R)$ is Boolean, it follows that $RG$ is a UJ-ring. So, Theorem \ref{1} is a guarantor that $RG$ is a strongly $\Delta$-clean ring, as stated. Note that every strongly $\pi$-regular ring is strongly clean always.
\end{proof}

From the above lemma, we can conclude that, if $R$ is abelian and strongly $\Delta$-clean, and $G$ is a locally finite 2-group, then $RG$ is also a strongly $\Delta$-clean ring.

Nevertheless, we now attempt to prove this point differently in the following.

\begin{lemma}
Let $R$ be an abelian ring and let $G$ be a locally finite group. Then, the following two conditions are equivalent:

(1) $RG$ is a strongly $\Delta$-clean ring.

(2) $R$ is a strongly $\Delta$-clean ring and $G$ is a 2-group.
\end{lemma}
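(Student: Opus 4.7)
The direction $(1) \Rightarrow (2)$ follows at once from Theorem \ref{1.6}, so the substance of the argument lies in $(2) \Rightarrow (1)$. My plan is to peel off the augmentation: given $x = \sum_{g\in G} a_g g \in RG$, write $x = \varepsilon(x) + y$ with $y = \sum_{g\in G} a_g(g-1) \in \varepsilon(RG)$. By Lemma \ref{prop group ring}(1), $y \in J(RG)$. Since $R$ is strongly $\Delta$-clean, $\varepsilon(x)$ admits a decomposition $\varepsilon(x) = e + d$ with $e^2 = e$, $d \in \Delta(R)$, and $ed = de$, giving
\[
x = e + (d + y).
\]
I would then argue that this is a strongly $\Delta$-clean representation in $RG$.

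For commutativity, the abelian hypothesis on $R$ makes $e$ central in $R$, and since $rg = gr$ for all $r \in R$, $g \in G$ in the group ring, $e$ is central in $RG$ too; hence $e$ commutes with $d + y$ automatically. For the $\Delta$-membership of $d + y$, I plan to first establish the auxiliary fact that $\Delta(R) = J(R)$ whenever $R$ is strongly $\Delta$-clean. Indeed, Corollary \ref{booli} makes $R/J(R)$ Boolean, where the only unit is $1$ and hence $\Delta$ is zero; since units lift modulo $J(R)$, the image of $\Delta(R)$ under the canonical surjection lies in $\Delta(R/J(R)) = 0$, so $\Delta(R) \subseteq J(R)$, and the reverse inclusion is automatic. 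Then Lemma \ref{prop group ring}(3) gives $J(R) \subseteq J(RG)$ (any $r \in J(R)$ satisfies $\varepsilon(r) = r \in J(R)$), so $d \in J(RG)$; combined with $y \in J(RG)$, the sum $d + y$ lies in $J(RG) \subseteq \Delta(RG)$, as required.

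The step I expect to require the most thought is the identification $\Delta(R) = J(R)$ in strongly $\Delta$-clean rings; once that is in hand, the remaining verifications reduce to straightforward bookkeeping with the pieces of Lemma \ref{prop group ring}. The role of the abelian hypothesis is exactly to guarantee that the idempotent $e$ produced in $R$ remains central in $RG$, which is what permits a direct argument here and avoids the detour through strongly $\pi$-regular and UJ-rings used in the preceding semi-abelian lemma.
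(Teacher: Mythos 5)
Your proof is correct and follows essentially the same route as the paper: the same decomposition $x=\varepsilon(x)+y$ with $y\in\varepsilon(RG)\subseteq J(RG)$ via Lemma \ref{prop group ring}, the same splitting of $\varepsilon(x)$ in $R$, and the same use of the abelian hypothesis to secure commutativity of the idempotent. The only (harmless) divergence is the sub-step showing $d\in J(RG)$: you establish $\Delta(R)=J(R)$ and invoke part (3) of that lemma, whereas the paper observes $d-d^2=d(1-d)\in J(RG)$ from Booleanness of $RG/J(RG)$ and cancels the unit $1-d$.
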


\begin{proof}
$(1) \Rightarrow (2)$. This follows immediately from Theorem \ref{1.6}.

$(2) \Rightarrow (1)$. Arguing as in point (2) of Lemma \ref{prop group ring}, we can show that $RG=J(RG)+R$. Choose $f \in RG$. We may, with no harm of generality, assume that $f=a+j \in R +J(RG)$. Let $a=e+d$ be a strongly $\Delta$-clean decomposition. Consulting with Lemma \ref{prop group ring}, one has that $d-d^2=d(1-d)\in J(RG)$. Since $d \in \Delta(R)$, we know that $1-d \in U(R)$, and thus $d \in J(RG)$. Therefore, we can write $$f=e+(d +j) \in \text{Id}(RG)+J(RG).$$ Moreover, since $R$ is abelian, we extract $e(d+j)=(d+j)e$. Consequently, $RG$ is a strongly $\Delta$-clean ring, as formulated.
\end{proof}


The next construction appears to be interesting.

\begin{example}
If $R$ is a commutative uniquely clean ring and $G$ is an abelian $2$-group, then $T_n(R)G\cong T_n(RG)$ is strongly $\Delta$-clean for all $n\geq 1$.
\end{example}

\begin{proof}
In fact, \cite[Theorem 12]{cnzg} claims that $RG$ is a commutative uniquely clean ring. Thus, it follows from
Theorem~\ref{triangular} that $T_n(R)G\cong T_n(RG)$ is strongly $\Delta$-clean for all $n\geq 1$. But, however, they are surely {\it not} uniquely clean unless $n=1$.
\end{proof}

Our final assertion is the following.

\begin{proposition}\label{group}
Let $R$ be a ring and let $G\neq 1$ be a locally finite group in which every finite subgroup has odd order. Then, $RG$ is not strongly $\Delta$-clean.
\end{proposition}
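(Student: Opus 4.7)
The plan is to argue by contradiction, leveraging Theorem \ref{1.6} to force a structural incompatibility between $G$ being a $2$-group and the hypothesis that every finite subgroup of $G$ has odd order.

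Concretely, I would begin by assuming $RG$ is strongly $\Delta$-clean and invoking Theorem \ref{1.6} to conclude that $G$ must be a $2$-group, i.e., every element of $G$ has order a power of $2$. Next, since $G \neq 1$, I would pick a non-identity element $g \in G$ and consider the cyclic subgroup $\langle g \rangle$. Because $G$ is locally finite, $\langle g \rangle$ is a finite subgroup of $G$, and its order is a positive power of $2$ (in particular, an even number), as $g$ has $2$-power order and $g \neq 1$.

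On the other hand, by hypothesis every finite subgroup of $G$ has odd order, so $|\langle g \rangle|$ must be odd. This yields an integer that is simultaneously an even power of $2$ (i.e., divisible by $2$) and odd, which is impossible. This contradiction shows that $RG$ cannot be strongly $\Delta$-clean.

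The main (and only) obstacle is really just pinning down the right invocation of Theorem \ref{1.6}; once the $2$-group conclusion is in hand, the arithmetic contradiction with the odd-order hypothesis is immediate. No delicate manipulation of $\Delta(RG)$, idempotents, or the augmentation ideal is required beyond what has already been packaged into Theorem \ref{1.6}.
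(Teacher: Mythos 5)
Your argument is correct, but it takes a genuinely different route from the paper's. You use the group-theoretic half of Theorem \ref{1.6} (that $G$ must be a $2$-group) and then observe that a nontrivial $2$-group contains a finite cyclic subgroup of even order, which collides head-on with the odd-order hypothesis; this is a clean arithmetic finish that needs nothing further about $\Delta(RG)$. The paper instead uses only the ring-theoretic half of Theorem \ref{1.6} (that $R$ is strongly $\Delta$-clean): it passes to $\bar R=R/J(R)$, which is boolean by Corollary \ref{booli}, notes that the (odd) orders of the finite subgroups of $G$ are then units in $\bar R$, and invokes Connell's regularity criterion to conclude that $\bar R G$ is von Neumann regular; since $\bar RG$ is also strongly $\Delta$-clean with zero Jacobson radical, Theorem \ref{boolean} makes it boolean, forcing $g^2=g$, i.e. $g=1$, for every $g\in G$, contradicting $G\neq 1$. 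Your approach buys brevity and dispenses with the regularity machinery entirely; the paper's approach buys independence from the ``$G$ is a $2$-group'' conclusion of Theorem \ref{1.6} --- the more delicate part of that theorem, whose proof passes through Connell's Proposition 15 --- and instead consumes the odd-order hypothesis directly via the regularity criterion. Both are legitimate given Theorem \ref{1.6} as stated, so your proof stands.
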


\begin{proof}
Suppose the contrary that $RG$ is strongly $\Delta$-clean. In virtue of Theorem~\ref{1.6}, we can conclude that $R$ is strongly $\Delta$-clean, and hence $\bar R=R/J(R)$ is boolean in conjunction with Corollary~\ref{booli}. But, since every finite subgroup has odd order, each natural $n$ has the property that $n\in |G|$ is a unit in $R$ (here $|G|$ is the set of orders of all finite subgroups in $G$). Moreover, as $\bar R$ is regular and $G$ is locally finite, $\bar RG$ is too regular via an exploitation of \cite[Theorem 3]{conel}. However, $\bar RG$ is strongly $\Delta$-clean, whence it is boolean, but this is an obvious contradiction because $G\neq 1$. Consequently, $RG$ is not strongly $\Delta$-clean, as asserted.
\end{proof}

In closing, in regard to Theorem~\ref{1}, we pose the following.

\medskip

\noindent{\bf Problem 1.} Decide when a strongly $\Delta$-clean ring is strongly $J$-clean.

\medskip

\noindent{\bf Problem 2.} Find a necessary and sufficient condition for a group ring to be strongly $\Delta$-clean.

\medskip
\medskip
\medskip
\medskip


\noindent{\bf Funding:} This work is based upon research funded by Iran National Science Foundation (INSF) under project no. 4041928.

\vskip4.0pc


\begin{thebibliography}{99}
	
\bibitem{ann}
S. Annin, {\it Associated primes over skew polynomials rings}, Commun. Algebra {\bf 30} (2002), 2511--2528.

\bibitem{bdd}
G. Borooah, A.J. Diesl and T.J. Dorsey, {\it Strongly clean triangular matrix rings over local rings}, J. Algebra {\bf 312}(2) (2007), 773--797.

\bibitem{csj}
H. Chen, {\it On strongly J-clean rings}, Commun. Algebra {\bf 38} (2010), 3790--3804.

\bibitem{chensjc}
H. Chen, {\it Strongly J-clean matrices over local rings}, Commun. Algebra {\bf 40}(4) (2012), 1352--1362.

\bibitem{cheni}
W. Chen, {\it On constant products of elements in skew polynomial rings}, Bull. Iran. Math. Soc. {\bf 41}(2) (2015), 453--462.

\bibitem{wchen}
W. Chen, {\it On semiabelian $\pi$-regular rings}, Int. J. Math. Math. Sci. {\bf 23} (2007), 1--10.

\bibitem{cnzg}
J. Chen, W.K. Nicholson and Y. Zhou, {\it Group rings in which every element is uniquely the sum of a unit and an idempotent}, J. Algebra {\bf 306} (2006), 453--460.

\bibitem{cyz}
J. Chen, X. Yang and Y. Zhou, {\it On strongly clean matrix and triangular matrix rings}, Commun. Algebra {\bf 34}(10) (2007), 3659--3674.

\bibitem{chin}
A.Y.M. Chin and H.V. Chen, {\it On strongly $\pi$-regular group rings}, Southeast Asian Bull. Math. {\bf 26}(3) (2002), 387--390.

\bibitem{conel}
I.G. Connell, {\it On the group ring}, Canad. J. Math. {\bf 15} (1963), 650--685.

\bibitem{dl}
P.V. Danchev and T.-Y. Lam, {\it Rings with unipotent units}, Publ. Math. Debrecen {\bf 88}(3-4) (2016), 449--466.

\bibitem{diesl}
A.J. Diesl, {\it Nil clean rings}, J. Algebra {\bf 383} (2013), 197--211.

\bibitem{hm}
E. Hashemi and A. Moussavi, {\it Polynomial extensions of quasi-Baer rings}, Acta Math. Hungar. {\bf 107}(3) (2005), 207--224.

\bibitem{kkqt}
F. Karabacak, M. T. Ko\c{s}an, T. Quynh and D. Tai, {\it A generalization of UJ-rings}, J. Algebra Appl. {\bf 20} (2021).

\bibitem{kwz}
M.T. Kosan, Z. Wang and Y. Zhou, {\it Nil-clean and strongly nil-clean rings}, J. Pure Appl. Algebra {\bf 220}(2) (2016), 633--646.

\bibitem{20}
M.T. Kosan, {\it The p. p. property of trivial extensions}, J. Algebra Appl. {\bf 14} (2015).

\bibitem{lamf}
T.-Y. Lam, {\it A First Course in Noncommutative Rings}, Graduate Texts in Mathematics, {\bf 131}, Springer, Berlin-Heidelberg-New York, 2001.

\bibitem{lame}
T.-Y. Lam, {\it Exercises in Classical Ring Theory}, Problem Books in Mathematics, Springer, Berlin-Heidelberg-New York, 2003.

\bibitem{lmr}
A. Leroy and J. Matczuk, {\it Remarks on the Jacobson radical}, Contemp. Math., in: Rings, Modules and Codes {\bf 727} (2019), 269--276.

\bibitem{2}
W.K. Nicholson, {\it Lifting idempotents and exchange rings}, Trans. Am. Math. Soc. {\bf 229} (1977), 269--278.

\bibitem{7}
W.K. Nicholson, {\it Strongly clean rings and fitting's lemma}, Commun. Algebra {\bf 27}(8) (1999), 2583--3592.

\bibitem{nzu}
W.K. Nicholson, Y. Zhou, {\it Rings in which elements are uniquely the sum of an idempotent and a unit}, Glasg. Math. J. {\bf 46} (2004), 227--236.

\bibitem{rmor}
A.D. Sands, {\it Radicals and Morita contexts}, J. Algebra {\bf 24} (1973), 335--345.

\bibitem{tang}
G. Tang, C. Li and Y. Zhou, {\it Study of Morita contexts}, Commun. Algebra {\bf 42}(4) (2014), 1668--1681.

\bibitem{wang}
X.-L. Wang, {\it Uniquely strongly clean group rings}, Commun. Math. Sci. {\bf 28}(1) (2012), 17--25.

\bibitem{hty}
Y. Hirano, H. Tominaga and A. Yaqub, {\it On rings in which every element is uniquely expressible as a sum of a nilpotent element and a certain potent element}, Math. J. Okayama Univ. {\bf 30} (1988), 33--40.

\bibitem{zucc}
Y. Zhou, {\it On clean group rings}, in: Advances in Ring heory, Trends in Mathematics, Birkh\"auser Verlag Basel/Switzerland, 2010, pp. 335--345.

\end{thebibliography}
\end{document}